\documentclass[reqno, 12pt]{amsart}%
\usepackage{amsmath}
\usepackage{amssymb}
\usepackage{amsfonts}
\usepackage{graphicx}%
\setcounter{MaxMatrixCols}{30}
%TCIDATA{OutputFilter=latex2.dll}
%TCIDATA{Version=5.50.0.2953}
%TCIDATA{CSTFile=amsartci.cst}
%TCIDATA{Created=Monday, October 27, 2014 20:41:14}
%TCIDATA{LastRevised=Saturday, January 04, 2020 14:56:56}
%TCIDATA{<META NAME="GraphicsSave" CONTENT="32">}
%TCIDATA{<META NAME="SaveForMode" CONTENT="1">}
%TCIDATA{BibliographyScheme=Manual}
%TCIDATA{<META NAME="DocumentShell" CONTENT="Articles\SW\AMS Journal Article">}
%TCIDATA{Language=American English}
%TCIDATA{ComputeDefs=
%$B_{R,T}^{\ast}=%
%%TCIMACRO{\unit{acre}}%
%%BeginExpansion
%\operatorname{acre}%
%%EndExpansion%
%%TCIMACRO{\unit{Bq}}%
%%BeginExpansion
%\operatorname{Bq}%
%%EndExpansion%
%%TCIMACRO{\unit{m}}%
%%BeginExpansion
%\operatorname{m}%
%%EndExpansion
%^{3}%
%%TCIMACRO{\unit{ft}}%
%%BeginExpansion
%\operatorname{ft}%
%%EndExpansion
%^{3}$
%}
%BeginMSIPreambleData
\providecommand{\U}[1]{\protect\rule{.1in}{.1in}}
%EndMSIPreambleData
\newtheorem{theorem}{Theorem}
\theoremstyle{plain}

\newtheorem{definition}{Definition}

\newtheorem{lemma}{Lemma}

\newtheorem{proposition}{Proposition}
\newtheorem{remark}{Remark}

\DeclareMathOperator{\Div}{div}
\numberwithin{equation}{section}
\numberwithin{theorem}{section}
\numberwithin{proposition}{section}
\numberwithin{remark}{section}
\numberwithin{definition}{section}
\numberwithin{lemma}{section}
\numberwithin{corollary}{section}
\numberwithin{example}{section}
\numberwithin{claim}{section}
\oddsidemargin =1mm
\evensidemargin =1mm
\textwidth = 165mm
\textheight = 215mm
\topmargin = 0mm
\begin{document}
\title[Cahn-Hilliard-Navier-Stokes system]{Homogenization of 2D Cahn-Hilliard-Navier-Stokes system}
\author{Renata Bunoiu}
\address{R. Bunoiu, IECL, CNRS UMR 7502, Universit\'{e} de Lorraine, 3, rue Augustin
Fresnel, 57073, Metz, France}
\email{renata.bunoiu@univ-lorraine.fr}
\urladdr{http://www.iecl.univ-lorraine.fr/\symbol{126}Renata.Bunoiu/}
\author{Giuseppe Cardone}
\address{G. Cardone, University of Sannio, Department of Engineering, Corso Garibaldi,
107, 84100 Benevento, Italy}
\email{giuseppe.cardone@unisannio.it}
\urladdr{http://www.ing.unisannio.it/cardone}
\author{Romaric Kengne}
\address{R. Kengne, Department of Mathematics and Computer Science, University of
Dschang, P.O. Box 67, Dschang, Cameroon}
\email{kengromes@gmail.com}
\author{Jean Louis Woukeng}
\address{J.L. Woukeng, Department of Mathematics and Computer Science, University of
Dschang, P.O. Box 67, Dschang, Cameroon}
\email{jwoukeng@gmail.com}
\date{January 2020}
\subjclass[2000]{35B27, 35B40, 46J10}
\keywords{Cahn-Hilliard-Navier-Stokes system, sigma-convergence, homogenization, variable viscosity.}
\begin{abstract}
In the current work, we are performing the asymptotic analysis, beyond the
periodic setting, of the Cahn-Hilliard-Navier-Stokes system. Under the general
deterministic distribution assumption on the microstructures in the domain, we
find the limit model equivalent  to the heterogeneous one.
To this end, we use the sigma-convergence concept which is suitable for the
passage to the limit.

\end{abstract}
\maketitle

\section{Introduction}

There are numerous natural phenomena that have emphasized the flow of fluids
with different scales of behavior: sap, river, concrete. Some contexts
involving multiphasic flows in natural or artificial media are the depollution
of soils \cite{pol}, filtering \cite{sol}, design of composite materials
\cite{sole,sole1} for chemical industry, blood flow or equally the flow of
liquid-gases in the energetic cell \cite{sole2}. Otherwise these flows are
observed in media where the microscopic structure is extremely variable. It is
therefore very important to understand the multiphasic flows in some media
presenting a periodic structure or heterogeneity of size which is much smaller
than the dimension of the domain, with different scale of space and time. Thus
it is convenient to identify and analyze interfacial processes that happen at
the microscopic scale, in order to describe their manifestation at the
macroscopic scale.

The phase-field approach is a popular tool for the modeling and simulation of
multiphase flow problems, see for instance \cite{6',16,28'} for an overview.

A typical model for the evolution of a mixture of two incompressible,
immiscible and isothermal fluids occupying a domain $Q\subset\mathbb{R}^{2}$,
on the time interval $(0,T)$ consists of a system of
Cahn-Hilliard-Navier-Stokes equations
\begin{equation}
\left\{
\begin{array}
[c]{l}%
\displaystyle{\frac{\partial\boldsymbol{u}}{\partial t}}-\nu\Delta
\boldsymbol{u}+(\boldsymbol{u}\cdot\nabla)\boldsymbol{u}+\nabla p+\kappa
\phi\nabla\mu=g\text{ in }Q_{T}=(0,T)\times Q\\
\text{$\Div$}\boldsymbol{u}=0\text{ in }Q_{T}\\
\displaystyle{\frac{\partial\phi}{\partial t}}+(\boldsymbol{u}\cdot\nabla
)\phi-\Delta\mu=0\text{ in }Q_{T}\\
\mu=-\lambda\Delta\phi+\alpha f(\phi)\text{ in }Q_{T}%
\end{array}
\right.  \label{1''}%
\end{equation}
where $Q$ is a Lipschitz domain in $\mathbb{R}^{2}$ and $T$ a given positive
real number, $\boldsymbol{u}$, $p$ and $\phi$ are unknown velocity, pressure
and the order parameter (which represents the relative concentration of one of
the fluids) respectively. Here the constants $\nu>0$ and $\kappa>0$ correspond
to the kinematic viscosity of the fluid and to the capillarity (stress) coefficient
respectively, while $\lambda,\alpha>0$ are two physical parameters describing
the interaction between the two phases. In particular, $\lambda$ is related
with the thickness of the interface separating the two fluids (see \cite{b2}
for details) and it is reasonable to assume that $\lambda<\alpha$. The
quantity $\mu$ is the variational derivative of the functional
\[
\mathcal{F}(\phi)=\int_{Q}\left(  \frac{\lambda}{2}\left\vert \nabla
\phi\right\vert ^{2}+\alpha F(\phi)\right)  ds
\]
where $F$ is a homogeneous free energy functional defined by
\[
F(r)=\int_{0}^{r}f(\upsilon)d\upsilon\text{ for }r\in\mathbb{R}.
\]
A common choice for $F$ is a quadratic double-well free energy functional
\[
F(s)=\frac{1}{4}(s^{2}-1)^{2}.
\]

The first two equations in (\ref{1''}) are the incompressible Navier-Stokes
equations, where the nonlinear term $\phi\nabla\mu$ models the surface tension
effects, cf. \cite{26'}. The two last equations in (\ref{1''}) are
Cahn-Hilliard type equations with advection effect modeled by the term
$(\boldsymbol{u}\cdot\nabla)\phi$.

The study of almost periodic homogenization for a single phase flow has been
done in \cite{15}. For numerical homogenization approaches for single phase
Navier-Stokes flow we mention \cite{8',10',24'}. The multiscale analysis for
two-phase flow is less developed as shown by the very few works existing in
the literature; see e.g. \cite{25',sch,SE} where the homogenization result
of sharp interface models for two-phase flows is performed.

In this work we are concerned with the deterministic homogenization of
Cahn-Hilliard-Navier-Stokes system in a fixed bounded open two-dimensional
domain. Here, the usual Laplace operator involved in the classical
Navier-Stokes equations is replaced by an elliptic linear differential
operator of order two, in divergence form and modeling fluids  with variable oscillating
viscosities. We refer to \cite{sta,pan} for the study of the Stokes flow with
variable viscosity in thin domains. More precisely, the problem under study is
stated in (\ref{1})-(\ref{4}); the hypotheses on the variable oscillating
viscosity are given in (\textbf{A1}). We first study problem (\ref{1}%
)-(\ref{4}) in the periodic setting. Indeed, up to our knowledge, there is no
existing result in the literature. For the Stokes-Cahn-Hilliard equation with
fixed small viscosity we refer the reader to \cite{sch} and \cite{SE} for the
formal derivation and the mathematical derivation of the homogenized problem,
respectively. Our main result for the periodic case is stated in Theorem
\ref{t3.4}. We notice that the homogenized problem (\ref{3.25}) is of the same
type as the initial one, with still variable but not anymore oscillating
viscosity. Once the periodic case is completed, we are in position to extend this
result to the more general deterministic homogenization setting, under
hypothesis (\ref{4.18}) for the viscosity. Our approach is based on the
sigma-convergence concept; see e.g. \cite{Hom1,NA,Deterhom}. The main result
is stated in Theorem \ref{t4.1}. 
The results presented in this paper are in particular valid when $Q$ is a finite cylinder. However, it
would be very interesting to study the case when Q is an infinite cylinder; for
the study of such kind of problems in infinite cylinders, we refer the reader e.g. to
\cite{Chipot1,Chipot2,Chipot3,Chipot4,Chipot6,Chipot5}.

The work is organized as follows. In Section 2, we state the $\varepsilon
$-problem and prove the \textit{a priori} estimates. Section 3 deals with the
systematic study of the homogenization of (\ref{1})-(\ref{4}) in the periodic
setting. Finally in Section 4, we treat the homogenization problem for
(\ref{1})-(\ref{4}) in the more general setting.

\section{Setting of the problem and uniform estimates}

\subsection{Statement of the problem}

We start by introducing the functional setup. If $X$ is a real Hilbert space
with inner product $(\cdot,\cdot)_{X}$, then we denote the induced norm by
$\left\vert \cdot\right\vert _{X}$, while $X^{\ast}$ will indicate its dual.
Moreover, we indicate by $\mathbb{X}$ the space $X\times X$ endowed with the
product structure. Especially, by $\mathbb{H}$ and $\mathbb{V}$ we denote the
Hilbert spaces defined as the closure in $\mathbb{L}^{2}(Q)=L^{2}(Q)^{2}$
(resp. $\mathbb{H}_{0}^{1}(Q)=H_{0}^{1}(Q)^{2}$) of the space
$\{\boldsymbol{u}\in\mathbb{C}_{0}^{\infty}(Q):\text{$\Div$}\boldsymbol{u}=0$
in $Q\}$. The space $\mathbb{H}$ is endowed with the scalar product denoted by
$(\cdot,\cdot)$ with the associated norm denoted by $\left\vert \cdot
\right\vert $. The space $\mathbb{V}$ is equipped with the scalar product
\[
((u,v))=\sum_{i=1}^{2}\left(  \frac{\partial u}{\partial x_{i}},\frac{\partial
v}{\partial x_{i}}\right)
\]
whose associated norm is the norm of the gradient and is denoted by
$\left\Vert \cdot\right\Vert $. Owing to the Poincar\'{e}'s inequality, the
norm in $\mathbb{V}$\ is equivalent to the $\mathbb{H}^{1}$-norm. \ We also
define the space $L_{0}^{2}(Q)=\{v\in L^{2}(Q):\int_{Q}vdx=0\}$. We refer the
reader to \cite[Section 2]{b2} for more details on these spaces.

This being so, our aim is to study the asymptotic behavior, as $\varepsilon
\rightarrow0$, of the solution of the system (\ref{1})-(\ref{4}) below:
\begin{equation}
\frac{\partial\boldsymbol{u}_{\varepsilon}}{\partial t}-\text{$\Div$}%
(A_{0}^{\varepsilon}\nabla\boldsymbol{u}_{\varepsilon})+(\boldsymbol{u}%
_{\varepsilon}\cdot\nabla)\boldsymbol{u}_{\varepsilon}+\nabla p_{\varepsilon
}-\kappa\mu_{\varepsilon}\nabla\phi_{\varepsilon}=g\text{ in }Q_{T},\label{1}%
\end{equation}%
\begin{equation}
\text{$\Div$}\boldsymbol{u}_{\varepsilon}=0\text{ in }Q_{T}\label{2}%
\end{equation}%
\begin{equation}
\frac{\partial\phi_{\varepsilon}}{\partial t}+\boldsymbol{u}_{\varepsilon
}\cdot\nabla\phi_{\varepsilon}-\Delta\mu_{\varepsilon}=0\text{ in }%
Q_{T},\label{3}%
\end{equation}%
\begin{equation}
\mu_{\varepsilon}=-\lambda\Delta\phi_{\varepsilon}+\alpha f(\phi_{\varepsilon
})\text{ in }Q_{T}\label{4}%
\end{equation}
where $Q$ is a fixed Lipschitz bounded domain in $\mathbb{R}^{2}$ and $T$ a
given positive real number. Here $\boldsymbol{u}_{\varepsilon}$,
$p_{\varepsilon}$ and $\phi_{\varepsilon}$ are microscopic unknowns velocity,
pressure and the order parameter respectively. In (\ref{1})-(\ref{4}) $\nabla$
(resp. $\Div$) stands for the gradient (resp. divergence) operator in $Q$ and
where the functions $A_{0}^{\varepsilon}$ and $g$ are constrained as follows:

\begin{itemize}
\item[(\textbf{A1})] \textbf{Uniform ellipticity}. The oscillating viscosity
$A_{0}^{\varepsilon}$ is defined by $A_{0}^{\varepsilon}(t,x)=A_{0}%
(t,x,\frac{t}{\varepsilon},\frac{x}{\varepsilon})$ for $(t,x)\in Q_{T}$ where
$A_{0}\in\mathcal{C}(\overline{Q}_{T};L^{\infty}(\mathbb{R}_{\tau,y}%
^{3})^{2\times2})$ is a symmetric matrix satisfying
\[
\gamma\left\vert \xi\right\vert ^{2}\leq A_{0}\xi\cdot\xi\leq\gamma
^{-1}\left\vert \xi\right\vert ^{2}\text{ for all }\xi\in\mathbb{R}^{2}\text{
and a.e. in }Q_{T}\times\mathbb{R}_{\tau,y}^{3}%
\]
where $\gamma>0$ a given constant independent of $x,t,y,\tau,\xi$.

\item[(\textbf{A2})] The function\ $g$ lies in $L^{2}(0,T;\mathbb{H}^{-1}%
(Q))$, and $f\in\mathcal{C}^{2}(\mathbb{R})$ satisfies
\begin{equation}
\underset{\left\vert r\right\vert \rightarrow\infty}{\lim\inf}f^{\prime
}(r)>0\text{ and }\left\vert f^{\prime\prime}(r)\right\vert \leq
c_{f}(1+\left\vert r\right\vert ^{k-1})\ \ \forall r\in\mathbb{R} \label{e5}%
\end{equation}
where $c_{f}$ is some positive constant and $1\leq k\in\mathbb{R}$ is fixed.
\end{itemize}

It follows from (\ref{e5}) that
\begin{equation}
\left\vert f^{\prime}(r)\right\vert \leq c_{f}(1+\left\vert r\right\vert
^{k})\text{ and }\left\vert f(r)\right\vert \leq c_{f}(1+\left\vert
r\right\vert ^{k+1})\ \ \forall r\in\mathbb{R} \label{e6}%
\end{equation}

The quantity $\mu_{\varepsilon}$ is the variational derivative of the
functional
\[
\mathcal{F}(\phi_{\varepsilon})=\int_{Q}\left(  \frac{\lambda}{2}\left\vert
\nabla\phi_{\varepsilon}\right\vert ^{2}+\alpha F(\phi_{\varepsilon})\right)
ds
\]
where
\begin{equation}
F(r)=\int_{0}^{r}f(\upsilon)d\upsilon\text{ for }r\in\mathbb{R}. \label{e00}%
\end{equation}
Regarding the boundary conditions for this model, as in \cite{b2}, we assume
that the boundary conditions for $\phi_{\varepsilon}$ and $\mu_{\varepsilon}$
are the natural no-flux condition%

\begin{equation}
\frac{\partial\phi_{\varepsilon}}{\partial\nu}=\frac{\partial\mu_{\varepsilon
}}{\partial\nu}=0\text{\ on }\partial Q \label{e2}%
\end{equation}
where $\nu$ is the outward normal vector to $\partial Q$. These conditions
ensure the mass conservation of the following quantity
\begin{equation}
\left\langle \phi_{\varepsilon}(t)\right\rangle
=\mathchoice {{\setbox0=\hbox{$\displaystyle{\textstyle
-}{\int}$ } \vcenter{\hbox{$\textstyle -$
}}\kern-.6\wd0}}{{\setbox0=\hbox{$\textstyle{\scriptstyle -}{\int}$ } \vcenter{\hbox{$\scriptstyle -$
}}\kern-.6\wd0}}{{\setbox0=\hbox{$\scriptstyle{\scriptscriptstyle -}{\int}$
} \vcenter{\hbox{$\scriptscriptstyle -$
}}\kern-.6\wd0}}{{\setbox0=\hbox{$\scriptscriptstyle{\scriptscriptstyle
-}{\int}$ } \vcenter{\hbox{$\scriptscriptstyle -$ }}\kern-.6\wd0}}\!\int
_{Q}\phi_{\varepsilon}(t,x)dx \label{e0}%
\end{equation}
where $\mathchoice {{\setbox0=\hbox{$\displaystyle{\textstyle
-}{\int}$ } \vcenter{\hbox{$\textstyle -$
}}\kern-.6\wd0}}{{\setbox0=\hbox{$\textstyle{\scriptstyle -}{\int}$ }
\vcenter{\hbox{$\scriptstyle -$
}}\kern-.6\wd0}}{{\setbox0=\hbox{$\scriptstyle{\scriptscriptstyle -}{\int}$
} \vcenter{\hbox{$\scriptscriptstyle -$
}}\kern-.6\wd0}}{{\setbox0=\hbox{$\scriptscriptstyle{\scriptscriptstyle
-}{\int}$ } \vcenter{\hbox{$\scriptscriptstyle -$ }}\kern-.6\wd0}}\!\int
_{Q}=\frac{1}{\left\vert Q\right\vert }\int_{Q}$ and $\left\vert Q\right\vert
$ stands for the Lebesgue measure of $Q$. More precisely, we have
\[
\left\langle \phi_{\varepsilon}(t)\right\rangle =\left\langle \phi
_{\varepsilon}(0)\right\rangle \ \ \forall t>0.
\]
Concerning the boundary condition for $\boldsymbol{u}_{\varepsilon}$, we
assume the Dirichlet (no-slip) boundary condition
\begin{equation}
\boldsymbol{u}_{\varepsilon}=0\text{ on }(0,T)\times\partial Q. \label{e3}%
\end{equation}
The initial condition is given by
\begin{equation}
(\boldsymbol{u}_{\varepsilon},\phi_{\varepsilon})(0)=(\boldsymbol{u}_{0}%
^{\ast},\phi_{0}^{\ast}) \label{e4}%
\end{equation}
where

\begin{itemize}
\item[(\textbf{A3})] $\boldsymbol{u}_{0}^{\ast}\in\mathbb{H}$,\ $\phi
_{0}^{\ast}\in H^{1}(Q)$.
\end{itemize}

Here above in (\textbf{A1}) and henceforth, the numerical space $\mathbb{R}%
_{\tau,y}^{3}$ stands for the product space $\mathbb{R}_{\tau}\times
\mathbb{R}_{y}^{2}$, where $\mathbb{R}_{\zeta}^{d}$ denotes the space
$\mathbb{R}^{d}$ of variable $\zeta$. We shall need the following bilinear
operator $B_{0}$ (and its related trilinear form $b_{0}$)%

\[
(B_{0}(\boldsymbol{u},\boldsymbol{v}),\boldsymbol{w})=\int_{Q}[(\boldsymbol{u}%
\cdot\nabla)\boldsymbol{v}]\cdot\boldsymbol{w}dx=b_{0}(\boldsymbol{u}%
,\boldsymbol{v},\boldsymbol{w})\ \forall\boldsymbol{u},\boldsymbol{v}%
,\boldsymbol{w}\in\mathbb{V}.
\]

\begin{remark}
\emph{The operator defined above enjoys continuity properties which depend on
the space dimension (cf., e.g., \cite[Chap. 9]{36}): }%
\[
b_{0}(\boldsymbol{u},\boldsymbol{w},\boldsymbol{v})=-b_{0}(\boldsymbol{u}%
,\boldsymbol{v},\boldsymbol{w})\ \ \forall\boldsymbol{u},\boldsymbol{v}%
,\boldsymbol{w}\in\mathbb{H};
\]%
\begin{equation}
\left\vert b_{0}(\boldsymbol{u},\boldsymbol{v},\boldsymbol{w})\right\vert \leq
c\left\vert \boldsymbol{u}\right\vert ^{\frac{1}{2}}\left\vert \nabla
\boldsymbol{u}\right\vert ^{\frac{1}{2}}\left\vert \nabla\boldsymbol{v}%
\right\vert \left\vert \boldsymbol{v}\right\vert ^{\frac{1}{2}}\left\vert
\nabla\boldsymbol{w}\right\vert ^{\frac{1}{2}}\left\vert \boldsymbol{w}%
\right\vert ^{\frac{1}{2}}.\label{e8}%
\end{equation}

\end{remark}

\begin{definition}
\label{de2}\emph{Let} $\boldsymbol{u}_{0}^{\ast}\in\mathbb{H}$, $\phi
_{0}^{\ast}\in L^{2}(Q)$\emph{, with }$F(\phi_{0}^{\ast})\in L^{1}(Q)$\emph{
and }$0<T<+\infty$ \emph{be given. Then} \emph{the triplet} $(\boldsymbol{u}%
_{\varepsilon},\phi_{\varepsilon},\mu_{\varepsilon})_{\varepsilon>0}$ \emph{is
a weak solution to the problem (\ref{1})-(\ref{4}) if}
\[
\boldsymbol{u}_{\varepsilon}\in L^{\infty}(0,T;\mathbb{H})\cap L^{2}%
(0,T;\mathbb{V})
\]%
\[
\partial_{t}\boldsymbol{u}_{\varepsilon}\in L^{2}(0,T;\mathbb{V}^{\ast})
\]%
\[
\phi_{\varepsilon}\in L^{\infty}(0,T;H^{1}(Q))\cap L^{\infty}(0,T;L^{4}(Q))
\]%
\[
\partial_{t}\phi_{\varepsilon}\in L^{2}(0,T;(H^{1}(Q))^{\ast})
\]%
\[
\mu_{\varepsilon}\in L^{2}(0,T;H^{1}(Q))
\]
\emph{and for all} $\psi\in L^{2}(0,T;\mathbb{V})$ \emph{and }$\varphi,\chi\in
L^{2}(0,T;H^{1}(Q))$\emph{, }%
\begin{equation}%
\begin{array}
[c]{l}%
\displaystyle{\int_{0}^{T}}\left(\frac{\partial\boldsymbol{u}_{\varepsilon}%
}{\partial t},\psi \right) dt+\int_{Q_{T}}A_{0}^{\varepsilon}\nabla\boldsymbol{u}%
_{\varepsilon}\cdot\nabla\psi dxdt\\
\ +\displaystyle{\int_{Q_{T}}}(\boldsymbol{u}_{\varepsilon}\cdot
\nabla)\boldsymbol{u}_{\varepsilon}\psi dxdt-\kappa\int_{Q_{T}}\mu
_{\varepsilon}\nabla\phi_{\varepsilon}\psi dxdt=\int_{Q_{T}}g\psi dxdt,
\end{array}
\label{1'}%
\end{equation}%
\begin{align}
\int_{0}^{T}\left(  \frac{\partial\phi_{\varepsilon}}{\partial t}%
,\varphi\right)  dt-\int_{Q_{T}}\boldsymbol{u}_{\varepsilon}\phi_{\varepsilon
}\cdot\nabla\varphi dxdt+\int_{Q_{T}}\nabla\mu_{\varepsilon}\cdot\nabla\varphi
dxdt  &  =0,\label{3'}\\
\int_{Q_{T}}\mu_{\varepsilon}\chi dxdt=\lambda\int_{Q_{T}}\nabla
\phi_{\varepsilon}\cdot\nabla\chi dxdt+  &  \alpha\int_{Q_{T}}f(\phi
_{\varepsilon})\chi dxdt. \label{4'}%
\end{align}

\end{definition}

Furthermore, with each weak solution $(\boldsymbol{u}_{\varepsilon}%
,\phi_{\varepsilon},\mu_{\varepsilon})$, we associate a pressure
$p_{\varepsilon}\in L^{\infty}(0,T;L_{0}^{2}(Q))$ which satisfies (\ref{1}) in
the distributional sense.

The existence of a weak solution in the sense of Definition \ref{de2} has been
extensively studied by many authors see e.g. \cite{18,17,16}. Theorem
\ref{t2.1} below can be proved as its homologue in \cite[Theorem 1]{17} (see
also \cite{18,pc,16}).

\begin{theorem}
\label{t2.1}Under assumptions \emph{(\textbf{A1})-(\textbf{A3})}, there exists
(for each fixed $\varepsilon>0$) a unique weak solution $(\boldsymbol{u}%
_{\varepsilon},\phi_{\varepsilon},\mu_{\varepsilon})$, to the problem
\emph{(\ref{1})-(\ref{4})} in the sense of Definition \emph{\ref{de2}}.
Furthermore, there exists a unique $p_{\varepsilon}\in L^{\infty}%
(0,T;L_{0}^{2}(Q))$ such that \emph{(\ref{1})} is satisfied in the
distributional sense.
\end{theorem}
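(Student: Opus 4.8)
The plan is to follow the Galerkin scheme used for the constant-viscosity problem in \cite[Theorem 1]{17}, the only structural difference being that $-\nu\Delta$ is replaced by the divergence-form operator $-\Div(A_{0}^{\varepsilon}\nabla\cdot)$; since $\varepsilon>0$ is fixed throughout, I suppress it in the notation. First I would fix a spectral basis $\{\boldsymbol{w}_{j}\}$ of $\mathbb{V}$ (the eigenfunctions of the Stokes operator) and a basis $\{\zeta_{j}\}$ of $H^{1}(Q)$ (the eigenfunctions of the Neumann Laplacian), and look for approximate solutions $\boldsymbol{u}^{n},\phi^{n},\mu^{n}$ in the corresponding finite-dimensional spaces, with initial data the projections of $\boldsymbol{u}_{0}^{\ast}$ and $\phi_{0}^{\ast}$ allowed by (\textbf{A3}). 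Projecting (\ref{1'}), (\ref{3'}) and (\ref{4'}) onto these spaces yields a system of ordinary differential equations for the coefficients; by (\textbf{A1}) the matrix associated with the viscous term is bounded and measurable in $t$, so Carath\'eodory's theorem gives a local-in-time solution, which the a priori bounds below extend to all of $[0,T]$.

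The core of the argument is the energy identity. I would test the momentum equation with $\boldsymbol{u}^{n}$ and the convective Cahn--Hilliard equation (\ref{3'}) with $\kappa\mu^{n}$, then use the potential relation (\ref{4'}) to identify $\kappa\int_{Q}\partial_{t}\phi^{n}\,\mu^{n}\,dx=\kappa\frac{d}{dt}\mathcal{F}(\phi^{n})$. The antisymmetry $b_{0}(\boldsymbol{u},\boldsymbol{v},\boldsymbol{w})=-b_{0}(\boldsymbol{u},\boldsymbol{w},\boldsymbol{v})$ kills the inertial term, and the capillary and convective couplings cancel because $\Div\boldsymbol{u}^{n}=0$ together with $\boldsymbol{u}^{n}=0$ on $\partial Q$ gives $\int_{Q}\boldsymbol{u}^{n}\cdot\nabla(\phi^{n}\mu^{n})\,dx=0$. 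Bounding the viscous term below by $\gamma\|\boldsymbol{u}^{n}\|^{2}$ via (\textbf{A1}) produces
\[
\frac{d}{dt}\left(\tfrac12|\boldsymbol{u}^{n}|^{2}+\kappa\mathcal{F}(\phi^{n})\right)+\gamma\|\boldsymbol{u}^{n}\|^{2}+\kappa\int_{Q}|\nabla\mu^{n}|^{2}\,dx\le\langle g,\boldsymbol{u}^{n}\rangle .
\]
Absorbing $\langle g,\boldsymbol{u}^{n}\rangle$ by Young's inequality and using the coercivity of $F$ implied by $\liminf_{|r|\to\infty}f'(r)>0$ in (\textbf{A2}), I obtain, uniformly in $n$, the bounds $\boldsymbol{u}^{n}\in L^{\infty}(0,T;\mathbb{H})\cap L^{2}(0,T;\mathbb{V})$, $\phi^{n}\in L^{\infty}(0,T;H^{1}(Q))$ and $\nabla\mu^{n}\in L^{2}(Q_{T})$. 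A separate test of (\ref{4'}) recovers control of the mean $\langle\mu^{n}\rangle$ through $\int_{Q}f(\phi^{n})$ and the growth bound (\ref{e6}) (admissible in dimension two since $H^{1}(Q)\hookrightarrow L^{p}(Q)$ for every $p<\infty$), upgrading the estimate to $\mu^{n}\in L^{2}(0,T;H^{1}(Q))$. The equations then yield $\partial_{t}\boldsymbol{u}^{n}\in L^{2}(0,T;\mathbb{V}^{\ast})$ and $\partial_{t}\phi^{n}\in L^{2}(0,T;(H^{1}(Q))^{\ast})$, the two-dimensional trilinear estimate (\ref{e8}) being what makes the nonlinear terms integrable.

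With these bounds I would pass to the limit $n\to\infty$. Weak-$\ast$ compactness gives limits in the spaces prescribed by Definition \ref{de2}, and the Aubin--Lions lemma yields strong convergence of $\boldsymbol{u}^{n}$ in $L^{2}(0,T;\mathbb{H})$ and of $\phi^{n}$ in $L^{2}(0,T;L^{2}(Q))$, enough to pass to the limit in the quadratic terms $b_{0}(\boldsymbol{u}^{n},\boldsymbol{u}^{n},\cdot)$ and $\int\boldsymbol{u}^{n}\phi^{n}\cdot\nabla\varphi$, and, by continuity of the Nemytskii operator associated with $f$ under (\ref{e6}), in the term $\int f(\phi^{n})\chi$. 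The limit triple satisfies (\ref{1'})--(\ref{4'}), giving existence. For uniqueness I would subtract two solutions, test the difference of the momentum and Cahn--Hilliard equations as above and close a Gr\"onwall inequality. Finally, for a.e.\ $t$ the equation (\ref{1'}) defines a bounded functional on $\mathbb{H}_{0}^{1}(Q)$ vanishing on $\mathbb{V}$; by De Rham's theorem (equivalently the closed-range/LBB property on the Lipschitz domain $Q$) it is a gradient $\nabla p_{\varepsilon}$, and normalizing $p_{\varepsilon}$ to zero mean places it uniquely in $L^{\infty}(0,T;L_{0}^{2}(Q))$ with (\ref{1}) holding distributionally.

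The step I expect to be the main obstacle is uniqueness: closing the Gr\"onwall estimate requires bounding the differences of the capillary coupling $\mu\nabla\phi$ and of $f(\phi)$ by the available dissipation, which relies delicately on the two-dimensional interpolation inequality (\ref{e8}) and on $\mu\in L^{2}(0,T;H^{1}(Q))$, so that uniqueness holds precisely because $Q\subset\mathbb{R}^{2}$. The variable viscosity causes no additional difficulty, since (\textbf{A1}) makes every estimate uniform and identical in form to the constant-coefficient case of \cite{17}.
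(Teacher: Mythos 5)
Your proposal is correct and follows essentially the same route as the paper, which simply delegates existence and uniqueness to the Galerkin/energy-method arguments of Abels and Boyer (noting only that the ellipticity in (\textbf{A1}) replaces the constant viscosity) and recovers the pressure via Simon's de Rham-type result, exactly as you do. The only minor discrepancy is one the paper itself contains: the argument actually yields $p_{\varepsilon}\in L^{2}(0,T;L_{0}^{2}(Q))$ (as in Lemma \ref{le2}), not the $L^{\infty}$-in-time regularity asserted in the theorem statement.
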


\begin{proof}
Using assumptions (\textbf{A1})-(\textbf{A3}), the method used in \cite{18,17}
provides us with the existence of a unique weak solution $(\boldsymbol{u}%
_{\varepsilon},\phi_{\varepsilon},\mu_{\varepsilon})\in L^{\infty
}(0,T;\mathbb{H})\cap L^{2}(0,T;\mathbb{V})\times L^{\infty}(0,T;H^{1}(Q))\cap
L^{\infty}(0,T;L^{4}(Q))\times L^{2}(0,T;H^{1}(Q))$. 
Indeed, although we have
variable viscosity in our assumptions, the proof follows the same way of
reasoning like the one in \cite[Theorem in Subsection 3.2]{17} by relying on
the ellipticity of the operator $-\Div$$(A_{0}^{\varepsilon}\nabla)$. 
For the existence of the
pressure, since $g\in L^{2}(0,T;\mathbb{H}^{-1}(Q))$ the necessary condition
of \cite[Section 4]{Rah} for the existence of the pressure is satisfied.
Coming back to (\ref{1}) let denoted by:
\[
\boldsymbol{h}_{\varepsilon}=g-\frac{\partial\boldsymbol{u}_{\varepsilon}%
}{\partial t}+\text{$\Div$}(A_{0}^{\varepsilon}\nabla\boldsymbol{u}%
_{\varepsilon})-(\boldsymbol{u}_{\varepsilon}\cdot\nabla)\boldsymbol{u}%
_{\varepsilon}+\kappa\mu_{\varepsilon}\nabla\phi_{\varepsilon},
\]
and $\left\langle \boldsymbol{h}_{\varepsilon},\boldsymbol{v}\right\rangle =0$
for every $\boldsymbol{v}\in\mathcal{C}_{0}^{\infty}(Q)^{2}$ with
$\Div\boldsymbol{v}=0$ where $\left\langle \ ,\ \right\rangle $ is the duality
pairing between $\mathcal{D}^{^{\prime}}(Q)^{2}$ and $\mathcal{D}(Q)^{2}$.
Arguing as in the proof of \cite[Proposition 5]{Rah} we are led to
$\boldsymbol{h}_{\varepsilon}\in L^{2}(0,T;H^{-1}(Q)^{2})$ so that there
exists a unique $p_{\varepsilon}\in L^{2}(0,T;L^{2}(Q))$ such that $\nabla
p_{\varepsilon}=\boldsymbol{h}_{\varepsilon}$ and $\int_{Q}p_{\varepsilon
}dx=0$.
\end{proof}

\subsection{A priori estimates}

We now derive some a priori estimates to show that the sequences
$(\boldsymbol{u}_{\varepsilon},\phi_{\varepsilon} ,\mu_{\varepsilon
},p_{\varepsilon})_{\varepsilon}$ are bounded independently of $\varepsilon$
in suitable function spaces.

\begin{lemma}
\label{l0}Suppose that $(\boldsymbol{u}_{\varepsilon},\phi_{\varepsilon}%
,\mu_{\varepsilon})$ is a smooth solution of \emph{(\ref{1})-(\ref{4})}. Then
the following dissipative energy equality holds:
\begin{equation}%
\begin{array}
[c]{l}%
\dfrac{d}{dt}\left[  \dfrac{1}{2\kappa}%
%TCIMACRO{\dint _{Q}}%
%BeginExpansion
{\displaystyle\int_{Q}}
%EndExpansion
\left\vert \boldsymbol{u}_{\varepsilon}(t,x)\right\vert ^{2}dx+%
%TCIMACRO{\dint _{Q}}%
%BeginExpansion
{\displaystyle\int_{Q}}
%EndExpansion
\dfrac{\lambda}{2}\left\vert \nabla\phi_{\varepsilon}(t,x)\right\vert
^{2}dx+\alpha%
%TCIMACRO{\dint _{Q}}%
%BeginExpansion
{\displaystyle\int_{Q}}
%EndExpansion
F(\phi_{\varepsilon}(t,x))dx\right] \\
\ +\dfrac{1}{\kappa}(A_{0}^{\varepsilon}\nabla\boldsymbol{u}_{\varepsilon
}(t),\nabla\boldsymbol{u}_{\varepsilon}(t))-\dfrac{1}{\kappa}(\boldsymbol{u}%
_{\varepsilon}(t),g(t))+%
%TCIMACRO{\dint _{Q}}%
%BeginExpansion
{\displaystyle\int_{Q}}
%EndExpansion
\left\vert \nabla\mu_{\varepsilon}(t,x)\right\vert ^{2}dx=0
\end{array}
\label{pr1}%
\end{equation}
where $\boldsymbol{u}_{\varepsilon}(t)=\boldsymbol{u}_{\varepsilon}(t,\cdot)$
and $g(t)=g(t,\cdot)$.
\end{lemma}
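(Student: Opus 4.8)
The plan is to establish (\ref{pr1}) by the classical energy method: test each of the equations (\ref{1}), (\ref{3}) and (\ref{4}) against a suitably chosen multiplier, and then add the resulting identities so that the coupling terms cancel. Since $(\boldsymbol{u}_{\varepsilon},\phi_{\varepsilon},\mu_{\varepsilon})$ is assumed smooth, every integration by parts below is legitimate and no density or approximation argument is needed.

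First I would multiply the momentum equation (\ref{1}) by $\frac{1}{\kappa}\boldsymbol{u}_{\varepsilon}$ and integrate over $Q$. The time-derivative term gives $\frac{d}{dt}\frac{1}{2\kappa}\int_{Q}\left\vert \boldsymbol{u}_{\varepsilon}\right\vert ^{2}dx$; integrating the viscous term by parts and using the no-slip condition (\ref{e3}) produces $\frac{1}{\kappa}(A_{0}^{\varepsilon}\nabla\boldsymbol{u}_{\varepsilon},\nabla\boldsymbol{u}_{\varepsilon})$. The convective contribution is $\frac{1}{\kappa}b_{0}(\boldsymbol{u}_{\varepsilon},\boldsymbol{u}_{\varepsilon},\boldsymbol{u}_{\varepsilon})$, which vanishes by the antisymmetry recorded in the Remark (take $\boldsymbol{v}=\boldsymbol{w}=\boldsymbol{u}_{\varepsilon}$, so that $b_{0}(\boldsymbol{u}_{\varepsilon},\boldsymbol{u}_{\varepsilon},\boldsymbol{u}_{\varepsilon})=-b_{0}(\boldsymbol{u}_{\varepsilon},\boldsymbol{u}_{\varepsilon},\boldsymbol{u}_{\varepsilon})$). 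The pressure term $\frac{1}{\kappa}\int_{Q}\nabla p_{\varepsilon}\cdot\boldsymbol{u}_{\varepsilon}dx$ integrates by parts to $-\frac{1}{\kappa}\int_{Q}p_{\varepsilon}\,\Div\boldsymbol{u}_{\varepsilon}dx=0$ thanks to (\ref{2}) and (\ref{e3}). What survives from the capillarity term is $-\int_{Q}\mu_{\varepsilon}(\boldsymbol{u}_{\varepsilon}\cdot\nabla\phi_{\varepsilon})dx$, while the right-hand side is $\frac{1}{\kappa}(\boldsymbol{u}_{\varepsilon},g)$.

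Next I would test the Cahn-Hilliard equation (\ref{3}) with $\mu_{\varepsilon}$: integrating $-\Delta\mu_{\varepsilon}$ by parts and using the no-flux condition (\ref{e2}) yields $\int_{Q}\left\vert \nabla\mu_{\varepsilon}\right\vert ^{2}dx$, while the advection term gives $+\int_{Q}(\boldsymbol{u}_{\varepsilon}\cdot\nabla\phi_{\varepsilon})\mu_{\varepsilon}dx$. To treat the remaining term $(\partial_{t}\phi_{\varepsilon},\mu_{\varepsilon})$ I would insert the definition (\ref{4}): the part $(\partial_{t}\phi_{\varepsilon},-\lambda\Delta\phi_{\varepsilon})$ becomes, after integration by parts with (\ref{e2}), the quantity $\frac{d}{dt}\int_{Q}\frac{\lambda}{2}\left\vert \nabla\phi_{\varepsilon}\right\vert ^{2}dx$, and the part $(\partial_{t}\phi_{\varepsilon},\alpha f(\phi_{\varepsilon}))$ equals $\alpha\frac{d}{dt}\int_{Q}F(\phi_{\varepsilon})dx$ since $F^{\prime}=f$ by (\ref{e00}). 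Adding the identity coming from (\ref{1}) to the one coming from (\ref{3})--(\ref{4}), the two couplings $-\int_{Q}\mu_{\varepsilon}(\boldsymbol{u}_{\varepsilon}\cdot\nabla\phi_{\varepsilon})dx$ and $+\int_{Q}(\boldsymbol{u}_{\varepsilon}\cdot\nabla\phi_{\varepsilon})\mu_{\varepsilon}dx$ cancel exactly, and collecting the surviving terms reproduces precisely (\ref{pr1}).

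The computation is essentially bookkeeping; the one place where the structure of the model is genuinely exploited, and hence the step I would watch most carefully, is the exact cancellation of the coupling terms. This requires that the capillarity force in (\ref{1}) be written as $\kappa\mu_{\varepsilon}\nabla\phi_{\varepsilon}$ (so that multiplication by $\frac{1}{\kappa}\boldsymbol{u}_{\varepsilon}$ removes the constant) and matched against the advection term $\boldsymbol{u}_{\varepsilon}\cdot\nabla\phi_{\varepsilon}$ of (\ref{3}) tested against $\mu_{\varepsilon}$; any sign or coefficient mismatch would leave a residual coupling and destroy the equality. The vanishing of $b_{0}(\boldsymbol{u}_{\varepsilon},\boldsymbol{u}_{\varepsilon},\boldsymbol{u}_{\varepsilon})$ and of the pressure integral both rely on the incompressibility constraint (\ref{2}) together with the boundary condition (\ref{e3}), so I would invoke these explicitly.
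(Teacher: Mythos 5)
Your proposal is correct and follows essentially the same route as the paper: test (\ref{1}) with $\frac{1}{\kappa}\boldsymbol{u}_{\varepsilon}$, test (\ref{3}) with $\mu_{\varepsilon}$ (using (\ref{4}) and $F'=f$ to identify $(\partial_t\phi_{\varepsilon},\mu_{\varepsilon})$ as a time derivative), and add so that the coupling terms cancel. Your write-up is in fact more explicit than the paper's about why the convective, pressure, and boundary terms vanish.
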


\begin{proof}
By taking the scalar product in $\mathbb{H}$ of equation (\ref{1}) with
$\boldsymbol{u}_{\varepsilon}$, and using (\ref{e3}), we obtain
\begin{equation}%
\begin{array}
[c]{l}%
\dfrac{1}{2\kappa}\dfrac{d}{dt}%
%TCIMACRO{\dint _{Q}}%
%BeginExpansion
{\displaystyle\int_{Q}}
%EndExpansion
\left\vert \boldsymbol{u}_{\varepsilon}(t,x)\right\vert ^{2}dx-%
%TCIMACRO{\dint _{Q}}%
%BeginExpansion
{\displaystyle\int_{Q}}
%EndExpansion
\mu_{\varepsilon}(t,x)\nabla\phi_{\varepsilon}(t)\cdot\boldsymbol{u}%
_{\varepsilon}(t,x)dx\\
\ \ \ +\dfrac{1}{\kappa}(A_{0}^{\varepsilon}\nabla\boldsymbol{u}_{\varepsilon
}(t),\nabla\boldsymbol{u}_{\varepsilon}(t))-\dfrac{1}{\kappa}(\boldsymbol{u}%
_{\varepsilon}(t),g(t))=0.
\end{array}
\label{1'''}%
\end{equation}
Next, taking the scalar product in$\ L^{2}(Q)$ of equation (\ref{3}) with
$\mu_{\varepsilon}$, we get
\begin{equation}%
\begin{array}
[c]{l}%
\displaystyle{\frac{d}{dt}}\left[  \int_{Q}\frac{\lambda}{2}\left\vert
\nabla\phi_{\varepsilon}(t,x)\right\vert ^{2}dx+\alpha\int_{Q}F(\phi
_{\varepsilon}(t,x))dx\right]  +\int_{Q}\left\vert \nabla\mu_{\varepsilon
}(t,x)\right\vert ^{2}dx\\
\ \ \ +\displaystyle{\int_{Q}}\mu_{\varepsilon}(t,x)\nabla\phi_{\varepsilon
}(t,x)\cdot\boldsymbol{u}_{\varepsilon}(t,x)dx=0.
\end{array}
\label{2'''}%
\end{equation}
Summing up equations (\ref{1'''}) and (\ref{2'''}) gives the result.
\end{proof}

It is also worth mentioning that (\ref{pr1}) is a consequence of the
orthogonality properties of the products below, which will also be employed in
the sequel, namely,
\begin{equation}
(B_{0}(u,v),v)=0\ \forall u,v\in\mathbb{V} \label{pr2}%
\end{equation}

\begin{lemma}
\label{l1}Under the assumptions \emph{(\textbf{A1})-(\textbf{A3})}, the weak
solution $(\boldsymbol{u}_{\varepsilon},\phi_{\varepsilon},\mu_{\varepsilon})$
of \emph{(\ref{1})-(\ref{4})} in the sense of Definition \emph{\ref{de2}}
satisfies the following estimate:
\begin{equation}%
\begin{array}
[c]{l}%
\dfrac{1}{2\kappa}\left\Vert \boldsymbol{u}_{\varepsilon}(t)\right\Vert
_{L^{2}(Q)^{2}}^{2}+\dfrac{\gamma}{\kappa}\left\Vert \boldsymbol{u}%
_{\varepsilon}\right\Vert _{L^{2}(0,T;\mathbb{V})}^{2}+\dfrac{\lambda}%
{2}\left\Vert \nabla\phi_{\varepsilon}\right\Vert _{L^{\infty}(0,T;L^{2}%
(Q)^{2})}^{2}\\
\ \ +\left\Vert \nabla\mu_{\varepsilon}\right\Vert _{L^{2}(0,T;L^{2}(Q)^{2}%
)}^{2}\leq C+C%
%TCIMACRO{\dint _{0}^{t}}%
%BeginExpansion
{\displaystyle\int_{0}^{t}}
%EndExpansion
\left\Vert \boldsymbol{u}_{\varepsilon}(s)\right\Vert _{L^{2}(Q)^{2}}^{2}ds.
\end{array}
\label{Dp}%
\end{equation}
We then have the estimates
\begin{equation}
\left\Vert \phi_{\varepsilon}\right\Vert _{L^{\infty}(0,T;H^{1}(Q))}\leq C,
\label{q3}%
\end{equation}%
\begin{equation}
\left\Vert \boldsymbol{u}_{\varepsilon}\right\Vert _{L^{\infty}(0,T;\mathbb{H}%
)\cap L^{2}(0,T;\mathbb{V})}\leq C, \label{q4}%
\end{equation}%
\begin{equation}
\left\Vert \partial_{t}\boldsymbol{u}_{\varepsilon}\right\Vert _{L^{2}%
(0,T;\mathbb{V}^{\ast})}\leq C, \label{n4}%
\end{equation}%
\begin{equation}
\left\Vert \mu_{\varepsilon}\right\Vert _{L^{2}(0,T;H^{1}(Q))}\leq C,
\label{q5}%
\end{equation}%
\begin{equation}
\left\Vert \partial_{t}\phi_{\varepsilon}\right\Vert _{L^{2}(0,T;(H^{1}%
(Q))^{\ast})}\leq C, \label{q3'}%
\end{equation}%
\begin{equation}
\left\Vert f(\phi_{\varepsilon})\right\Vert _{L^{2}(Q_{T})}\leq C
\label{3.12'}%
\end{equation}
where the positive constant $C$ does not depend on $\varepsilon$.
\end{lemma}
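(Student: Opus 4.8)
The plan is to read off every bound from the dissipative energy equality (\ref{pr1}) of Lemma \ref{l0}, integrated in time over $(0,t)$. Since (\ref{pr1}) is stated for smooth solutions, I would in fact carry out the computation on the Galerkin approximations and transfer the bounds to the weak solution by weak lower semicontinuity of the norms; for brevity I describe only the formal step. After integration, the left-hand side carries the five ``good'' quantities $\frac{1}{2\kappa}\|u_\varepsilon(t)\|_{L^2}^2$, $\frac{\lambda}{2}\|\nabla\phi_\varepsilon(t)\|_{L^2}^2$, $\alpha\int_Q F(\phi_\varepsilon(t))$, $\frac{1}{\kappa}\int_0^t(A_0^\varepsilon\nabla u_\varepsilon,\nabla u_\varepsilon)$ and $\int_0^t\|\nabla\mu_\varepsilon\|_{L^2}^2$, while the right-hand side is the initial energy (finite by (\textbf{A3}) and $F(\phi_0^\ast)\in L^1(Q)$) plus the forcing term $\frac{1}{\kappa}\int_0^t(u_\varepsilon,g)$. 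I would bound the viscous term below using the ellipticity of (\textbf{A1}), $(A_0^\varepsilon\nabla u_\varepsilon,\nabla u_\varepsilon)\ge\gamma\|u_\varepsilon\|_{\mathbb V}^2$, and estimate the forcing through the $\mathbb V$--$\mathbb V^\ast$ duality and Young's inequality, absorbing a fraction of the viscous term and leaving the constant $\|g\|_{L^2(0,T;\mathbb H^{-1})}^2$. Written in Gronwall-ready form this is exactly (\ref{Dp}); Gronwall's lemma then yields the uniform bound on $\|u_\varepsilon(t)\|_{L^2}$ over $[0,T]$, hence (\ref{q4}), and feeding this back into (\ref{Dp}) controls $\|u_\varepsilon\|_{L^2(0,T;\mathbb V)}$ and $\|\nabla\mu_\varepsilon\|_{L^2(Q_T)}$.

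To get (\ref{q3}) I would use the coercivity of $F$: the condition $\liminf_{|r|\to\infty}f'(r)>0$ in (\ref{e5}) forces a quadratic lower bound $F(r)\ge c_0 r^2-c_1$, so the energy term $\alpha\int_Q F(\phi_\varepsilon(t))$ controls $\|\phi_\varepsilon(t)\|_{L^2}^2$ up to an additive constant. (Equivalently one uses the conserved mean $\langle\phi_\varepsilon(t)\rangle=\langle\phi_0^\ast\rangle$ together with the Poincar\'e--Wirtinger inequality applied to the already bounded $\nabla\phi_\varepsilon$.) Combined with the $L^\infty(0,T;L^2)$ bound on $\nabla\phi_\varepsilon$, this gives the uniform $L^\infty(0,T;H^1(Q))$ bound (\ref{q3}).

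The nonlinear bounds come next. For (\ref{3.12'}) I would use the growth condition (\ref{e6}), $|f(r)|\le c_f(1+|r|^k)$, together with the two-dimensional embedding $H^1(Q)\hookrightarrow L^{2k}(Q)$, so that $\|f(\phi_\varepsilon)\|_{L^2(Q_T)}^2\le C(1+\int_0^T\|\phi_\varepsilon\|_{L^{2k}}^{2k}\,dt)\le C(1+T\|\phi_\varepsilon\|_{L^\infty(0,T;H^1)}^{2k})$, finite by (\ref{q3}). For (\ref{q5}) the gradient $\nabla\mu_\varepsilon$ is already bounded in $L^2(Q_T)$ by (\ref{Dp}), and it remains to control the mean: integrating (\ref{4}) over $Q$ and using the no-flux condition (\ref{e2}) annihilates the Laplacian and gives $\langle\mu_\varepsilon(t)\rangle=\frac{\alpha}{|Q|}\int_Q f(\phi_\varepsilon(t))$, which is bounded by (\ref{e6}) and the same embedding. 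Poincar\'e--Wirtinger then upgrades the gradient bound to the full bound (\ref{q5}).

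Finally I would read the two time-derivative bounds from the weak formulation, where I expect the main work. For (\ref{n4}) I test (\ref{1}) with $v\in\mathbb V$ (the pressure drops out) and estimate in $\mathbb V^\ast$: the viscous term by $\gamma^{-1}\|u_\varepsilon\|_{\mathbb V}$, the forcing by $\|g\|_{\mathbb V^\ast}$, the capillary term by $\|\mu_\varepsilon\nabla\phi_\varepsilon\|_{\mathbb V^\ast}\le C\|\mu_\varepsilon\|_{H^1}\|\nabla\phi_\varepsilon\|_{L^2}$ (via $H^1\hookrightarrow L^4$ in 2D), and the inertial term by the Ladyzhenskaya inequality in the form $\|(u_\varepsilon\cdot\nabla)u_\varepsilon\|_{\mathbb V^\ast}\le C\|u_\varepsilon\|_{L^2}\,\|u_\varepsilon\|_{\mathbb V}$, which follows from (\ref{e8}); squaring and integrating, every factor is absorbed by (\ref{q3})--(\ref{q5}). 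For (\ref{q3'}) I test (\ref{3'}) and bound $\int_{Q_T}u_\varepsilon\phi_\varepsilon\cdot\nabla\varphi$ by $\|u_\varepsilon\|_{L^4}\|\phi_\varepsilon\|_{L^4}\|\nabla\varphi\|_{L^2}$ and $\int_{Q_T}\nabla\mu_\varepsilon\cdot\nabla\varphi$ by $\|\nabla\mu_\varepsilon\|_{L^2}\|\varphi\|_{H^1}$. The genuinely delicate point throughout is that every nonlinear term is critical in dimension two and is tamed only by combining the $L^\infty(0,T;\mathbb H)\cap L^2(0,T;\mathbb V)$ integrability with the 2D interpolation and Ladyzhenskaya inequalities; this is precisely where the restriction to $\mathbb R^2$ is used, and is the step I would treat most carefully.
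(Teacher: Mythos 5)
Your proposal is correct and follows essentially the same route as the paper's proof: integrate the dissipative energy equality \eqref{pr1}, absorb the forcing via Young's inequality and the ellipticity of (\textbf{A1}) to reach \eqref{Dp}, apply Gronwall, use mass conservation together with Poincar\'e--Wirtinger for \eqref{q3} and for the mean of $\mu_{\varepsilon}$ in \eqref{q5}, and obtain \eqref{n4}, \eqref{q3'} by testing the weak formulations with the 2D embedding $H^{1}(Q)\hookrightarrow L^{4}(Q)$. The only slip is quoting the growth bound as $|f(r)|\leq c_{f}(1+|r|^{k})$ rather than $|f(r)|\leq c_{f}(1+|r|^{k+1})$ from \eqref{e6}; this is harmless since $H^{1}(Q)\hookrightarrow L^{2(k+1)}(Q)$ in two dimensions, and your argument for \eqref{3.12'} (which the paper leaves implicit) goes through with the corrected exponent.
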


\begin{proof}
If $(\boldsymbol{u}_{\varepsilon},\phi_{\varepsilon},\mu_{\varepsilon})$ is a
weak smooth solution of (\ref{1})-(\ref{4}), it verifies the dissipative
equality (\ref{pr1}) so that, using the assumption \textbf{(A3)}, we obtain
(after integrating (\ref{pr1}) over $(0,t)$) and using the following
inequality
\[
(\boldsymbol{u}_{\varepsilon}(t),g(t))\leq(4\delta)^{-1}\left\vert \left\vert
g(t)\right\vert \right\vert _{\mathbb{H}^{-1}(Q)}^{2}+\frac{\delta}%
{2}\left\vert \boldsymbol{u}_{\varepsilon}(t)\right\vert ^{2},
\]

the following inequality%

\begin{align*}
&  \frac{1}{2\kappa}\left\vert \boldsymbol{u}_{\varepsilon}(t)\right\vert
^{2}+\frac{\lambda}{2}\left\vert \nabla\phi_{\varepsilon}(t)\right\vert
^{2}+\alpha\int_{Q}F(\phi_{\varepsilon}(t))dx+\int_{0}^{t}\frac{\gamma}%
{\kappa}\left\vert \nabla\boldsymbol{u}_{\varepsilon}(s)\right\vert
^{2}+\left\vert \nabla\mu_{\varepsilon}(s)\right\vert ^{2}ds\\
&  \leq\frac{1}{\kappa}(4\delta)^{-1}\int_{0}^{t}\left\Vert g(s)\right\Vert
_{\mathbb{H}^{-1}(Q)}^{2}ds+\delta\int_{0}^{t}\left\vert \boldsymbol{u}%
_{\varepsilon}(s)\right\vert ^{2}ds+\frac{1}{2\kappa}\left\vert \boldsymbol{u}%
_{\varepsilon}(0)\right\vert ^{2}+\frac{\lambda}{2}\left\Vert \nabla
\phi_{\varepsilon}(0)\right\Vert ^{2}+\int_{Q}F(\phi_{\varepsilon}(0))dx.
\end{align*}
It follows that
\begin{equation}%
\begin{array}
[c]{l}%
\dfrac{1}{2\kappa}\left\vert \boldsymbol{u}_{\varepsilon}(t)\right\vert
^{2}+\dfrac{\lambda}{2}\left\vert \nabla\phi_{\varepsilon}(t)\right\vert
^{2}+\alpha%
%TCIMACRO{\dint _{Q}}%
%BeginExpansion
{\displaystyle\int_{Q}}
%EndExpansion
F(\phi_{\varepsilon}(t))dx+\dfrac{\gamma}{\kappa}\left\Vert \boldsymbol{u}%
_{\varepsilon}\right\Vert _{L^{2}(0,T;\mathbb{V})}^{2}+\left\Vert \nabla
\mu_{\varepsilon}\right\Vert _{L^{2}(0,T;L^{2}(Q))}^{2}\\
\ \ \leq C_{0}(\left\Vert \boldsymbol{u}_{0}^{\ast}\right\Vert _{\mathbb{H}%
},\left\Vert \phi_{0}^{\ast}\right\Vert _{H^{1}(Q)},\alpha,\delta
,\kappa,\lambda,\left\Vert g\right\Vert _{L^{2}(0,T;\mathbb{H}^{-1}(Q))}%
^{2})+\delta%
%TCIMACRO{\dint _{0}^{t}}%
%BeginExpansion
{\displaystyle\int_{0}^{t}}
%EndExpansion
\left\vert \boldsymbol{u}_{\varepsilon}(s)\right\vert ^{2}ds.
\end{array}
\label{p2}%
\end{equation}
So by (\ref{p2}) we get,
\[
\left\vert \boldsymbol{u}_{\varepsilon}(t)\right\vert ^{2}\leq C+C\int_{0}%
^{t}\left\vert \boldsymbol{u}_{\varepsilon}(s)\right\vert ^{2}ds
\]
and, using Gronwall's inequality,
\[
\left\Vert \boldsymbol{u}_{\varepsilon}(t)\right\Vert _{L^{2}(Q)}\leq
C\ \text{\ for all }0\leq t\leq T,\ \varepsilon>0.
\]
Whence
\begin{equation}
\left\Vert \boldsymbol{u}_{\varepsilon}\right\Vert _{L^{\infty}(0,T,\mathbb{H}%
)}\leq C. \label{p3}%
\end{equation}

Moreover, using (\ref{p2}) and (\ref{p3}) it holds that
\begin{equation}
\left\Vert \boldsymbol{u}_{\varepsilon}\right\Vert _{L^{\infty}(0,T;\mathbb{H}%
)\cap L^{2}(0,T;\mathbb{V})}\leq C, \label{s5}%
\end{equation}%
\begin{equation}
\left\Vert \nabla\phi_{\varepsilon}\right\Vert _{L^{\infty}(0,T;L^{2}(Q))}\leq
C, \label{s6}%
\end{equation}%
\begin{equation}
\left\Vert \nabla\mu_{\varepsilon}\right\Vert _{L^{2}(0,T;L^{2}(Q))}\leq C,
\label{s7}%
\end{equation}

From the mass conservation
$\mathchoice {{\setbox0=\hbox{$\displaystyle{\textstyle
-}{\int}$ } \vcenter{\hbox{$\textstyle -$
}}\kern-.6\wd0}}{{\setbox0=\hbox{$\textstyle{\scriptstyle -}{\int}$ }
\vcenter{\hbox{$\scriptstyle -$
}}\kern-.6\wd0}}{{\setbox0=\hbox{$\scriptstyle{\scriptscriptstyle -}{\int}$
} \vcenter{\hbox{$\scriptscriptstyle -$
}}\kern-.6\wd0}}{{\setbox0=\hbox{$\scriptscriptstyle{\scriptscriptstyle
-}{\int}$ } \vcenter{\hbox{$\scriptscriptstyle -$ }}\kern-.6\wd0}}\!\int
_{Q}\phi_{\varepsilon}%
(t)dx=\mathchoice {{\setbox0=\hbox{$\displaystyle{\textstyle
-}{\int}$ } \vcenter{\hbox{$\textstyle -$
}}\kern-.6\wd0}}{{\setbox0=\hbox{$\textstyle{\scriptstyle -}{\int}$ }
\vcenter{\hbox{$\scriptstyle -$
}}\kern-.6\wd0}}{{\setbox0=\hbox{$\scriptstyle{\scriptscriptstyle -}{\int}$
} \vcenter{\hbox{$\scriptscriptstyle -$
}}\kern-.6\wd0}}{{\setbox0=\hbox{$\scriptscriptstyle{\scriptscriptstyle
-}{\int}$ } \vcenter{\hbox{$\scriptscriptstyle -$ }}\kern-.6\wd0}}\!\int
_{Q}\phi_{\varepsilon}%
(0)dx=\mathchoice {{\setbox0=\hbox{$\displaystyle{\textstyle
-}{\int}$ } \vcenter{\hbox{$\textstyle -$
}}\kern-.6\wd0}}{{\setbox0=\hbox{$\textstyle{\scriptstyle -}{\int}$ }
\vcenter{\hbox{$\scriptstyle -$
}}\kern-.6\wd0}}{{\setbox0=\hbox{$\scriptstyle{\scriptscriptstyle -}{\int}$
} \vcenter{\hbox{$\scriptscriptstyle -$
}}\kern-.6\wd0}}{{\setbox0=\hbox{$\scriptscriptstyle{\scriptscriptstyle
-}{\int}$ } \vcenter{\hbox{$\scriptscriptstyle -$ }}\kern-.6\wd0}}\!\int
_{Q}\phi_{0}^{\ast}dx$ (where
$\mathchoice {{\setbox0=\hbox{$\displaystyle{\textstyle
-}{\int}$ } \vcenter{\hbox{$\textstyle -$
}}\kern-.6\wd0}}{{\setbox0=\hbox{$\textstyle{\scriptstyle -}{\int}$ }
\vcenter{\hbox{$\scriptstyle -$
}}\kern-.6\wd0}}{{\setbox0=\hbox{$\scriptstyle{\scriptscriptstyle -}{\int}$
} \vcenter{\hbox{$\scriptscriptstyle -$
}}\kern-.6\wd0}}{{\setbox0=\hbox{$\scriptscriptstyle{\scriptscriptstyle
-}{\int}$ } \vcenter{\hbox{$\scriptscriptstyle -$ }}\kern-.6\wd0}}\!\int
_{Q}=\left\vert Q\right\vert ^{-1}\int_{Q}$), we have that $\left\vert
\mathchoice {{\setbox0=\hbox{$\displaystyle{\textstyle
-}{\int}$ } \vcenter{\hbox{$\textstyle -$
}}\kern-.6\wd0}}{{\setbox0=\hbox{$\textstyle{\scriptstyle -}{\int}$ }
\vcenter{\hbox{$\scriptstyle -$
}}\kern-.6\wd0}}{{\setbox0=\hbox{$\scriptstyle{\scriptscriptstyle -}{\int}$
} \vcenter{\hbox{$\scriptscriptstyle -$
}}\kern-.6\wd0}}{{\setbox0=\hbox{$\scriptscriptstyle{\scriptscriptstyle
-}{\int}$ } \vcenter{\hbox{$\scriptscriptstyle -$ }}\kern-.6\wd0}}\!\int
_{Q}\phi_{\varepsilon}(t)dx\right\vert \leq C$. Next, using
Poincar\'{e}-Wirtinger inequality, we obtain
\begin{align*}
\left(  \int_{Q}\left\vert \phi_{\varepsilon}(t)\right\vert ^{2}dx\right)
^{\frac{1}{2}}  &  \leq C\left\Vert \nabla\phi_{\varepsilon}(t)\right\Vert
_{L^{2}(Q)}+C\left\vert
\mathchoice {{\setbox0=\hbox{$\displaystyle{\textstyle
-}{\int}$ } \vcenter{\hbox{$\textstyle -$
}}\kern-.6\wd0}}{{\setbox0=\hbox{$\textstyle{\scriptstyle -}{\int}$ } \vcenter{\hbox{$\scriptstyle -$
}}\kern-.6\wd0}}{{\setbox0=\hbox{$\scriptstyle{\scriptscriptstyle -}{\int}$
} \vcenter{\hbox{$\scriptscriptstyle -$
}}\kern-.6\wd0}}{{\setbox0=\hbox{$\scriptscriptstyle{\scriptscriptstyle
-}{\int}$ } \vcenter{\hbox{$\scriptscriptstyle -$ }}\kern-.6\wd0}}\!\int
_{Q}\phi_{\varepsilon}(t)dx\right\vert \\
&  \leq C.
\end{align*}
This leads to
\begin{equation}
\left\Vert \phi_{\varepsilon}\right\Vert _{L^{\infty}(0,T;H^{1}(Q))}\leq C.
\label{t6}%
\end{equation}
From (\ref{1'}) we obtain for all $\psi\in\mathbb{V}$,
\begin{align*}
\left\vert \left(  \frac{\partial\boldsymbol{u}_{\varepsilon}}{\partial
t},\psi\right)  \right\vert  &  \leq C\left\vert \left\vert \nabla
\boldsymbol{u}_{\varepsilon}\right\vert \right\vert _{L^{2}(Q)}\left\Vert
\nabla\psi\right\Vert _{L^{2}(Q)}+\left\vert \left\vert \boldsymbol{u}%
_{\varepsilon}\right\vert \right\vert _{L^{4}(Q)}\left\Vert \nabla
\boldsymbol{u}_{\varepsilon}\right\Vert _{L^{2}(Q)}\left\Vert \psi\right\Vert
_{L^{4}(Q)^{2}}\\
&  +\kappa\left\Vert \mu_{\varepsilon}\right\Vert _{L^{4}(Q)}\left\Vert
\nabla\phi_{\varepsilon}\right\Vert _{L^{2}(Q)}\left\vert \left\vert
\psi\right\vert \right\vert _{L^{4}(Q)}+\left\Vert g\right\Vert _{\mathbb{V}%
^{\ast}}\left\Vert \psi\right\Vert _{L^{2}(Q)}\\
&  \leq C\left\Vert \nabla\boldsymbol{u}_{\varepsilon}\right\Vert _{L^{2}%
(Q)}\left\Vert \psi\right\Vert _{H_{0}^{1}(Q)^{2}}+\left\Vert \boldsymbol{u}%
_{\varepsilon}\right\Vert _{L^{4}(Q)}\left\Vert \nabla\boldsymbol{u}%
_{\varepsilon}\right\Vert _{L^{2}(Q)}\left\Vert \psi\right\Vert _{H_{0}%
^{1}(Q)^{2}}\\
&  +\kappa\left\Vert \mu_{\varepsilon}\right\Vert _{H^{1}(Q)}\left\Vert
\nabla\phi_{\varepsilon}\right\Vert _{L^{2}(Q)}\left\Vert \psi\right\Vert
_{H_{0}^{1}(Q)^{2}}+\left\Vert g\right\Vert _{\mathbb{V}^{\ast}}\left\Vert
\psi\right\Vert _{L^{2}(Q)},
\end{align*}
where above we have used the continuous embedding $H^{1}(Q)\hookrightarrow
L^{4}(Q)$. Thus,
\begin{align*}
\sup_{\left\vert \left\vert \psi\right\vert \right\vert _{\mathbb{V}}\leq
1}\left\vert \left(  \frac{\partial\boldsymbol{u}_{\varepsilon}}{\partial
t},\psi\right)  \right\vert  &  \leq C\left\Vert \nabla\boldsymbol{u}%
_{\varepsilon}\right\Vert _{L^{2}(Q)}+\left\Vert \boldsymbol{u}_{\varepsilon
}\right\Vert _{H_{0}^{1}(Q)^{2}}\left\Vert \nabla\boldsymbol{u}_{\varepsilon
}\right\Vert _{L^{2}(Q)}\\
&  +\kappa\left\Vert \mu_{\varepsilon}\right\Vert _{H^{1}(Q)}\left\Vert
\nabla\phi_{\varepsilon}\right\Vert _{L^{2}(Q)}+\left\Vert g\right\Vert
_{\mathbb{V}^{\ast}}.
\end{align*}
We integrate the square of $\sup_{\left\vert \left\vert \psi\right\vert
\right\vert _{\mathbb{V}}\leq1}\left\vert \left(  \frac{\partial
\boldsymbol{u}_{\varepsilon}}{\partial t},\psi\right)  \right\vert $ on
$(0,T)$ and get by (\ref{s5})-(\ref{s7}) the bound
\[
\left\Vert \frac{\partial\boldsymbol{u}_{\varepsilon}}{\partial t}\right\Vert
_{L^{2}(0,T;\mathbb{V}^{\ast})}\leq C.
\]

Using Cauchy-Schwarz's inequality we get from (\ref{3'}) that
\begin{align*}
\left\vert \left(  \frac{\partial\phi_{\varepsilon}}{\partial t}%
,\varphi\right)  \right\vert  &  \leq C\left\Vert \boldsymbol{u}_{\varepsilon
}(t)\right\Vert _{L^{4}(Q)}\left\Vert \phi_{\varepsilon}(t)\right\Vert
_{L^{4}(Q)}\left\Vert \nabla\varphi\right\Vert _{L^{2}(Q)}+\left\Vert
\nabla\mu_{\varepsilon}(t)\right\Vert _{L^{2}(Q)}\left\Vert \nabla
\varphi\right\Vert _{L^{2}(Q)}\\
&  \leq C\left\Vert \boldsymbol{u}_{\varepsilon}(t)\right\Vert _{L^{4}%
(Q)}\left\Vert \phi_{\varepsilon}(t)\right\Vert _{L^{4}(Q)}\left\Vert
\varphi\right\Vert _{H^{1}(Q)}+\left\Vert \nabla\mu_{\varepsilon
}(t)\right\Vert _{L^{2}(Q)}\left\Vert \varphi\right\Vert _{H^{1}(Q)}\\
&  \leq C\left\Vert \boldsymbol{u}_{\varepsilon}(t)\right\Vert _{H^{1}(Q)^{2}%
}\left\Vert \phi_{\varepsilon}(t)\right\Vert _{H^{1}(Q)}\left\Vert
\varphi\right\Vert _{H^{1}(Q)}+\left\Vert \nabla\mu_{\varepsilon
}(t)\right\Vert _{L^{2}(Q)}\left\Vert \varphi\right\Vert _{H^{1}(Q)}.
\end{align*}
Thus,
\[
\sup_{\left\vert \left\vert \varphi\right\vert \right\vert _{H^{1}(Q)}\leq
1}\left\vert \left(  \frac{\partial\phi_{\varepsilon}}{\partial t}%
,\varphi\right)  \right\vert \leq C\left\Vert \boldsymbol{u}_{\varepsilon
}(t)\right\Vert _{H^{1}(Q)}\left\Vert \phi_{\varepsilon}(t)\right\Vert
_{H^{1}(Q)}+\left\Vert \nabla\mu_{\varepsilon}(t)\right\Vert _{L^{2}(Q)}.
\]
We integrate the square of $\sup_{\left\vert \left\vert \varphi\right\vert
\right\vert _{H^{1}(Q)}\leq1}\left\vert \left(  \frac{\partial\phi
_{\varepsilon}}{\partial t},\varphi\right)  \right\vert $ on $(0,T)$ and
obtain by (\ref{s5})-(\ref{s7}) the estimate
\begin{equation}
\left\Vert \frac{\partial\phi_{\varepsilon}}{\partial t}\right\Vert
_{L^{2}(0,T;(H^{1}(Q))^{\ast})}\leq C. \label{t7''}%
\end{equation}
The next step is to deduce an estimate for the sequence $\mu_{\varepsilon}$ in
$L^{2}(0,T;H^{1}(Q))$. To this aim we first observe that $(-\Delta
\phi_{\varepsilon},1)=0$ and using (\ref{e6}) we have
\begin{equation}
\left\vert \int_{Q}\mu_{\varepsilon}dx\right\vert =\left\vert (\mu
_{\varepsilon},1)\right\vert =\left\vert (\alpha f(\phi_{\varepsilon
}),1)\right\vert \leq\alpha\int_{Q}\left\vert f(\phi_{\varepsilon})\right\vert
dx\leq C\int_{Q}(1+\left\vert \phi_{\varepsilon}\right\vert ^{k+1})dx
\label{s9}%
\end{equation}
where in (\ref{s9}) we have used the inequality $f(r)\leq C(1+\left\vert
r\right\vert ^{k+1})$. Now, since $k+1\geq2$, we deduce from the Sobolev
embedding $H^{1}(Q)\hookrightarrow L^{k+1}(Q)$ that
there is a positive constant $C$ such that
\[
\left(  \int_{Q}\left\vert \phi_{\varepsilon}\right\vert ^{k+1}dx\right)
^{\frac{1}{k+1}}\leq C\left\Vert \phi_{\varepsilon}\right\Vert _{H^{1}(Q)}.
\]
Whence in view of (\ref{s9}),
\begin{equation}
\left\vert \int_{Q}\mu_{\varepsilon}dx\right\vert \leq C. \label{t7}%
\end{equation}
Applying Poincar\'{e}-Wirtinger's inequality we obtain
\[
\int_{Q}\left\vert \mu_{\varepsilon}%
-\mathchoice {{\setbox0=\hbox{$\displaystyle{\textstyle
-}{\int}$ } \vcenter{\hbox{$\textstyle -$
}}\kern-.6\wd0}}{{\setbox0=\hbox{$\textstyle{\scriptstyle -}{\int}$ }
\vcenter{\hbox{$\scriptstyle -$
}}\kern-.6\wd0}}{{\setbox0=\hbox{$\scriptstyle{\scriptscriptstyle -}{\int}$
} \vcenter{\hbox{$\scriptscriptstyle -$
}}\kern-.6\wd0}}{{\setbox0=\hbox{$\scriptscriptstyle{\scriptscriptstyle
-}{\int}$ } \vcenter{\hbox{$\scriptscriptstyle -$ }}\kern-.6\wd0}}\!\int
_{Q}\mu_{\varepsilon}\right\vert ^{2}dx\leq C\int_{Q}\left\vert \nabla
\mu_{\varepsilon}\right\vert ^{2}dx
\]
where $C$ is independent of $\varepsilon$. It follows that
\[
\int_{Q}\left\vert \mu_{\varepsilon}\right\vert ^{2}dx\leq C\left[  \int
_{Q}\left\vert \nabla\mu_{\varepsilon}\right\vert ^{2}dx+\left\vert \int
_{Q}\mu_{\varepsilon}dx\right\vert ^{2}\right]  ,
\]
from which, using (\ref{s7}) and (\ref{t7}), we obtain
\begin{equation}
\left\Vert \mu_{\varepsilon}\right\Vert _{L^{2}(Q)}\leq C. \label{t8}%
\end{equation}
Integrating (\ref{t8}) over $(0,T)$ and using (\ref{s7}) we obtain
\[
\left\Vert \mu_{\varepsilon}\right\Vert _{L^{2}(0,T;H^{1}(Q))}\leq C.
\]
This completes the proof.
\end{proof}

We recall the following properties of the Bogovskii operator which will be
used to estimate the pressure $p_{\varepsilon}$.

\begin{lemma}
[{\cite[Lemma 3.17, p. 169]{160}}]\label{le1}Let $1<p<\infty$. There exists a
linear operator $\mathfrak{B}:L_{0}^{p}(Q)\rightarrow W_{0}^{1,p}(Q)^{N}$ with
the following properties

\begin{itemize}
\item[(i)] \text{$\Div$}$\mathfrak{B}(f)=f$ a.e in $Q$ for any $f\in L_{0}%
^{p}(Q)$,

\item[(ii)] $\left\Vert \mathfrak{B}(f)\right\Vert _{W_{0}^{1,p}(Q)^{N}}\leq
c(p,Q)\left\Vert f\right\Vert _{L^{p}(Q)}$,

\item[(iii)] If $f = \ $\text{$\Div$}$g$ with $g\in L^{r}(Q)^{N}$ and $g\cdot
\nu=0$ on $\partial Q$ for some $1<r<\infty$, where $\nu$ is the outward unit
vector normal to $\partial Q$, then $\left\Vert \mathfrak{B}(f)\right\Vert
_{L^{r}(Q)^{N}}\leq c(r,Q)\left\Vert g\right\Vert _{L^{r}(Q)^{N}}$,

\item[(iv)] If $f\in\mathcal{C}_{0}^{\infty}(Q)\cap L_{0}^{p}(Q)$, then
$\mathfrak{B}(f)\in\mathcal{C}_{0}^{\infty}(Q)^{N}$.
\end{itemize}
\end{lemma}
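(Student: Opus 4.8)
The plan is to construct $\mathfrak{B}$ explicitly and then verify the four properties, first on a domain that is star-shaped with respect to a ball and afterwards on a general bounded Lipschitz domain by a patching argument. I would begin with the model case in which $Q$ is star-shaped with respect to an open ball $B$ with $\overline{B}\subset Q$. Fix $\omega\in\mathcal{C}_0^\infty(B)$ with $\int_Q\omega\,dx=1$ and, for $f\in L_0^p(Q)$, set
\[
\mathfrak{B}(f)(x)=\int_Q f(y)\,\mathbf{N}(x,y)\,dy,\qquad \mathbf{N}(x,y)=\frac{x-y}{|x-y|^N}\int_{|x-y|}^{\infty}\omega\!\left(y+r\frac{x-y}{|x-y|}\right)r^{N-1}\,dr.
\]
This is Bogovskii's classical formula. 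Differentiating under the integral sign and using the normalization $\int_Q\omega\,dx=1$ together with the mean-zero hypothesis $\int_Q f\,dx=0$ yields $\Div\mathfrak{B}(f)=f$, which is (i); star-shapedness guarantees that the integration ray stays inside $Q$, so the explicit formula forces $\mathfrak{B}(f)$ to vanish near $\partial Q$ and gives the $W_0^{1,p}$ membership.

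The key analytic input is the bound (ii). The decisive observation is that each derivative $\partial_{x_i}\mathbf{N}(x,y)$ splits as $K_i(x,x-y)$ plus a weakly singular (hence integrable) remainder, where $K_i(x,\cdot)$ is homogeneous of degree $-N$ in its second argument with vanishing mean over spheres. Thus $\nabla\mathfrak{B}$ is, modulo a bounded operator, a Calder\'{o}n--Zygmund singular integral operator, and its $L^p\to L^p$ boundedness for $1<p<\infty$ follows from the standard Calder\'{o}n--Zygmund theory; combined with the Poincar\'{e} inequality this yields (ii). For (iii), when $f=\Div g$ with $g\cdot\nu=0$ on $\partial Q$, I would integrate by parts in the defining integral to transfer the derivative from $f$ onto $\mathbf{N}$; the boundary term drops out thanks to $g\cdot\nu=0$, and $\nabla_y\mathbf{N}$ is again a kernel homogeneous of degree $-N$, so that $\mathfrak{B}(\Div g)$ is expressed as a Calder\'{o}n--Zygmund operator acting directly on $g$ (with no surviving derivative), whence $\|\mathfrak{B}(f)\|_{L^r(Q)^N}\le c(r,Q)\|g\|_{L^r(Q)^N}$. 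Property (iv) is the softest: away from the diagonal $\mathbf{N}$ is smooth, and for $f\in\mathcal{C}_0^\infty(Q)\cap L_0^p(Q)$ the explicit formula shows $\mathfrak{B}(f)\in\mathcal{C}_0^\infty(Q)^N$.

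Finally, to pass from the star-shaped model case to a general bounded Lipschitz domain $Q$, I would cover $\overline{Q}$ by finitely many open sets $Q_1,\dots,Q_m$ such that each $Q_j\cap Q$ is star-shaped with respect to a ball, take a subordinate partition of unity, and then redistribute the mass: write $f=\sum_{j=1}^m f_j$ with $f_j$ supported in $Q_j\cap Q$ and $\int_{Q_j\cap Q}f_j\,dx=0$, which is achievable by a chain of transfer functions along the overlaps precisely because $\int_Q f\,dx=0$. Setting $\mathfrak{B}(f)=\sum_{j=1}^m\mathfrak{B}_j(f_j)$, with $\mathfrak{B}_j$ the local operators built above, transfers (i)--(iv) to $Q$. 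I expect the bookkeeping of this mass-redistribution step --- arranging that every local datum has zero mean while retaining the $L^p$ and $L^r$ bounds with constants depending only on $p$ (resp.\ $r$) and $Q$ --- to be the main structural obstacle, since by then the singular-integral estimates on each star-shaped piece are standard.
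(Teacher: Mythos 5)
Your construction is the classical Bogovskii argument and is correct in outline; the paper itself offers no proof of this lemma, quoting it verbatim from \cite[Lemma 3.17]{160}, and the cited source proves it in essentially the way you describe (explicit kernel on star-shaped domains, Calder\'on--Zygmund estimates for (ii) and (iii), smoothness of the desingularized kernel for (iv), and a partition-of-unity plus mass-redistribution argument for general Lipschitz domains). The one step you flag as the main obstacle --- producing mean-zero local pieces $f_j$ --- is handled in the standard references by a linear chain of transfer operators along overlapping star-shaped subdomains, so nothing beyond your sketch is needed.
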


\begin{lemma}
\label{le2}Assume that Lemma \emph{\ref{l1}} and hypotheses \emph{(\textbf{A1}%
)-(\textbf{A3})} are satisfied, then $p_{\varepsilon}\in L^{2}(0,T;L_{0}%
^{2}(Q))$ and the following estimate holds
\[
\sup_{\varepsilon>0}\left\Vert p_{\varepsilon}\right\Vert _{L^{2}%
(0,T;L_{0}^{2}(Q))}\leq C
\]
for some positive constant $C$ independent of $\varepsilon$.
\end{lemma}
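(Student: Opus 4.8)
The plan is to reconstruct the pressure from the momentum equation and to estimate it by duality, realizing the Ne\v{c}as inequality uniformly in $\varepsilon$ with the Bogovskii operator $\mathfrak{B}$ of Lemma \ref{le1}. Recall from the proof of Theorem \ref{t2.1} that, for a.e. $t\in(0,T)$,
\[
\nabla p_{\varepsilon}=\boldsymbol{h}_{\varepsilon}:=g-\frac{\partial\boldsymbol{u}_{\varepsilon}}{\partial t}+\Div(A_{0}^{\varepsilon}\nabla\boldsymbol{u}_{\varepsilon})-(\boldsymbol{u}_{\varepsilon}\cdot\nabla)\boldsymbol{u}_{\varepsilon}+\kappa\mu_{\varepsilon}\nabla\phi_{\varepsilon}
\]
holds in $\mathbb{H}^{-1}(Q)$, with $\int_{Q}p_{\varepsilon}\,dx=0$. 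First I would record the duality bound produced by $\mathfrak{B}$: for a.e. $t$, set $\boldsymbol{v}_{\varepsilon}(t)=\mathfrak{B}(p_{\varepsilon}(t))\in\mathbb{H}_{0}^{1}(Q)$, so that $\Div\boldsymbol{v}_{\varepsilon}=p_{\varepsilon}$ and, by (ii) of Lemma \ref{le1}, $\Vert\boldsymbol{v}_{\varepsilon}(t)\Vert_{\mathbb{H}_{0}^{1}(Q)}\le c\Vert p_{\varepsilon}(t)\Vert_{L^{2}(Q)}$ with $c=c(Q)$ independent of $\varepsilon$. Then
\[
\Vert p_{\varepsilon}(t)\Vert_{L^{2}(Q)}^{2}=\int_{Q}p_{\varepsilon}\,\Div\boldsymbol{v}_{\varepsilon}\,dx=-\langle\nabla p_{\varepsilon}(t),\boldsymbol{v}_{\varepsilon}(t)\rangle ,
\]
whence $\Vert p_{\varepsilon}(t)\Vert_{L^{2}(Q)}\le c\Vert\nabla p_{\varepsilon}(t)\Vert_{\mathbb{H}^{-1}(Q)}$. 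It then remains to bound $\nabla p_{\varepsilon}=\boldsymbol{h}_{\varepsilon}$ in $L^{2}(0,T;\mathbb{H}^{-1}(Q))$ uniformly in $\varepsilon$.

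The delicate point, and the main obstacle, is the time-derivative contribution $\partial_{t}\boldsymbol{u}_{\varepsilon}$: Lemma \ref{l1} only controls it in $L^{2}(0,T;\mathbb{V}^{\ast})$, i.e. against solenoidal test fields, whereas $\boldsymbol{v}_{\varepsilon}=\mathfrak{B}(p_{\varepsilon})$ is not divergence free, so the pairing $\langle\partial_{t}\boldsymbol{u}_{\varepsilon},\boldsymbol{v}_{\varepsilon}\rangle$ is not directly meaningful. I would remove this term using the Helmholtz--Leray structure. Writing $\boldsymbol{h}_{\varepsilon}^{0}:=g+\Div(A_{0}^{\varepsilon}\nabla\boldsymbol{u}_{\varepsilon})-(\boldsymbol{u}_{\varepsilon}\cdot\nabla)\boldsymbol{u}_{\varepsilon}+\kappa\mu_{\varepsilon}\nabla\phi_{\varepsilon}$, the momentum equation reads $\partial_{t}\boldsymbol{u}_{\varepsilon}+\nabla p_{\varepsilon}=\boldsymbol{h}_{\varepsilon}^{0}$. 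Since $\Div\boldsymbol{u}_{\varepsilon}=0$ forces $\Div\partial_{t}\boldsymbol{u}_{\varepsilon}=0$, applying the Leray projection $\mathbb{P}$ annihilates $\nabla p_{\varepsilon}$ and fixes $\partial_{t}\boldsymbol{u}_{\varepsilon}$, so that $\nabla p_{\varepsilon}=(I-\mathbb{P})\boldsymbol{h}_{\varepsilon}^{0}$; equivalently, $\nabla p_{\varepsilon}$ is the gradient part of $\boldsymbol{h}_{\varepsilon}^{0}$ in the Helmholtz decomposition. Consequently $\Vert\nabla p_{\varepsilon}\Vert_{\mathbb{H}^{-1}(Q)}\le C\Vert\boldsymbol{h}_{\varepsilon}^{0}\Vert_{\mathbb{H}^{-1}(Q)}$ with $C=C(Q)$, which is the quantitative de Rham/Ne\v{c}as statement and disposes of $\partial_{t}\boldsymbol{u}_{\varepsilon}$ entirely: once one observes that $\nabla p_{\varepsilon}$ depends only on the non-solenoidal part of the forcing, the time derivative is no longer a bottleneck.

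It then suffices to bound $\boldsymbol{h}_{\varepsilon}^{0}$ in $L^{2}(0,T;\mathbb{H}^{-1}(Q))$ term by term from Lemma \ref{l1}. The forcing $g$ lies in $L^{2}(0,T;\mathbb{H}^{-1}(Q))$ by (\textbf{A2}). For the viscous term, $\Vert\Div(A_{0}^{\varepsilon}\nabla\boldsymbol{u}_{\varepsilon})\Vert_{\mathbb{H}^{-1}(Q)}\le\Vert A_{0}^{\varepsilon}\nabla\boldsymbol{u}_{\varepsilon}\Vert_{L^{2}(Q)}\le\gamma^{-1}\Vert\nabla\boldsymbol{u}_{\varepsilon}\Vert_{L^{2}(Q)}$ by (\textbf{A1}), which is square-integrable in time by (\ref{q4}). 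For the convective term I would use $(\boldsymbol{u}_{\varepsilon}\cdot\nabla)\boldsymbol{u}_{\varepsilon}=\Div(\boldsymbol{u}_{\varepsilon}\otimes\boldsymbol{u}_{\varepsilon})$, so that its $\mathbb{H}^{-1}$-norm is controlled by $\Vert\boldsymbol{u}_{\varepsilon}\Vert_{L^{4}(Q)}^{2}$; the two-dimensional Ladyzhenskaya inequality $\Vert\boldsymbol{u}_{\varepsilon}\Vert_{L^{4}(Q)}^{2}\le c\Vert\boldsymbol{u}_{\varepsilon}\Vert_{L^{2}(Q)}\Vert\boldsymbol{u}_{\varepsilon}\Vert_{\mathbb{V}}$ together with (\ref{q4}) then gives $\int_{0}^{T}\Vert\boldsymbol{u}_{\varepsilon}\Vert_{L^{4}(Q)}^{4}\,dt\le c\Vert\boldsymbol{u}_{\varepsilon}\Vert_{L^{\infty}(0,T;\mathbb{H})}^{2}\Vert\boldsymbol{u}_{\varepsilon}\Vert_{L^{2}(0,T;\mathbb{V})}^{2}\le C$. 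Finally, for the capillarity term, $\Vert\mu_{\varepsilon}\nabla\phi_{\varepsilon}\Vert_{L^{4/3}(Q)}\le\Vert\mu_{\varepsilon}\Vert_{L^{4}(Q)}\Vert\nabla\phi_{\varepsilon}\Vert_{L^{2}(Q)}$, and the embedding $L^{4/3}(Q)\hookrightarrow\mathbb{H}^{-1}(Q)$ valid in dimension two, combined with $H^{1}(Q)\hookrightarrow L^{4}(Q)$ and the bounds (\ref{q3}), (\ref{q5}), controls this term in $L^{2}(0,T;\mathbb{H}^{-1}(Q))$. Collecting these estimates yields $\Vert\boldsymbol{h}_{\varepsilon}^{0}\Vert_{L^{2}(0,T;\mathbb{H}^{-1}(Q))}\le C$ uniformly; combining this with the two displayed inequalities gives $\sup_{\varepsilon>0}\Vert p_{\varepsilon}\Vert_{L^{2}(0,T;L_{0}^{2}(Q))}\le C$, as claimed.
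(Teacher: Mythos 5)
Your overall strategy is the same as the paper's: both reduce the claim to a duality estimate realized by the Bogovskii operator of Lemma \ref{le1}, i.e.\ to the Ne\v{c}as-type bound $\Vert p_{\varepsilon}(t)\Vert_{L^{2}(Q)}\le c\Vert\nabla p_{\varepsilon}(t)\Vert_{\mathbb{H}^{-1}(Q)}$ combined with an $L^{2}(0,T;\mathbb{H}^{-1}(Q))$ bound on the momentum balance. Your term-by-term estimates for $g$, $\Div(A_{0}^{\varepsilon}\nabla\boldsymbol{u}_{\varepsilon})$, $\Div(\boldsymbol{u}_{\varepsilon}\otimes\boldsymbol{u}_{\varepsilon})$ and $\kappa\mu_{\varepsilon}\nabla\phi_{\varepsilon}$ are correct and in fact more explicit than the paper's, which simply invokes Lemma \ref{l1} to bound $\langle Z_{\varepsilon},\boldsymbol{S}\rangle$.

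The genuine gap is the step you introduce to dispose of $\partial_{t}\boldsymbol{u}_{\varepsilon}$. You rightly observe that Lemma \ref{l1} controls it only in $L^{2}(0,T;\mathbb{V}^{\ast})$, i.e.\ as a functional on solenoidal fields, but the repair via the Leray projection is circular. The identity $\nabla p_{\varepsilon}=(I-\mathbb{P})\boldsymbol{h}_{\varepsilon}^{0}$ with the bound $\Vert\nabla p_{\varepsilon}\Vert_{\mathbb{H}^{-1}}\le C\Vert\boldsymbol{h}_{\varepsilon}^{0}\Vert_{\mathbb{H}^{-1}}$ requires (a) that $\mathbb{P}$ extend to a bounded projection of $\mathbb{H}^{-1}(Q)$ with complementary range $\nabla L_{0}^{2}(Q)$, and (b) that $\mathbb{P}\partial_{t}\boldsymbol{u}_{\varepsilon}=\partial_{t}\boldsymbol{u}_{\varepsilon}$ as elements of $L^{2}(0,T;\mathbb{H}^{-1}(Q))$. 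Point (b) presupposes that $\partial_{t}\boldsymbol{u}_{\varepsilon}$ is already realized in $L^{2}(0,T;\mathbb{H}^{-1}(Q))$; since $\partial_{t}\boldsymbol{u}_{\varepsilon}=\boldsymbol{h}_{\varepsilon}^{0}-\nabla p_{\varepsilon}$, that membership is, via the very Ne\v{c}as inequality you use, equivalent to the conclusion $p_{\varepsilon}\in L^{2}(0,T;L_{0}^{2}(Q))$ you are trying to prove. Point (a) is also delicate: $\mathbb{P}$ is the $L^{2}$-orthogonal projection onto $\mathbb{H}$; its boundedness on $\mathbb{H}^{-1}(Q)$ is dual to boundedness on $\mathbb{H}_{0}^{1}(Q)$, which is not available on a general Lipschitz domain, and in any case $\mathbb{P}$ destroys the zero tangential trace, so $\mathbb{P}\boldsymbol{v}\notin\mathbb{V}$ for $\boldsymbol{v}\in\mathbb{H}_{0}^{1}(Q)$. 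What de Rham/Ne\v{c}as actually gives is $\Vert p\Vert_{L^{2}}\le C\Vert\nabla p\Vert_{\mathbb{H}^{-1}}$, not the projection bound you assert, so the claim that ``the time derivative is no longer a bottleneck'' is unjustified. To close the gap one must either show directly that $\partial_{t}\boldsymbol{u}_{\varepsilon}$ is bounded in $L^{2}(0,T;\mathbb{H}^{-1}(Q))$ (not merely in $\mathbb{V}^{\ast}$), or argue on the time-integrated equation as in the paper of Simon cited by the authors. To be fair, the paper's own proof is equally silent on this point---it pairs $\partial_{t}\boldsymbol{u}_{\varepsilon}$ with the non-solenoidal field $\boldsymbol{S}=\mathfrak{B}(h)$ and bounds the result ``by Lemma \ref{l1}''---but your proposed substitute does not supply the missing justification either.
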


\begin{proof}
First, we know that $p_{\varepsilon}\in\mathcal{D}^{\prime}(Q_{T})$. Let
$h\in\mathcal{C}_{0}^{\infty}(Q_{T})\cap L^{2}(0,T;L_{0}^{2}(Q))$ (recall that
the dual of $L_{0}^{2}(Q)$ is $L_{0}^{2}(Q)$). By [parts (i) and (iv) of]
Lemma \ref{le1}, let $\boldsymbol{S}\in\mathcal{C}_{0}^{\infty}(Q)^{2}$ be
such that $\text{$\Div$}\boldsymbol{S}=h$. Then
\[
\left\langle \nabla p_{\varepsilon},\boldsymbol{S}\right\rangle =-\left\langle
p_{\varepsilon},\text{$\Div$}\boldsymbol{S}\right\rangle =-\left\langle
p_{\varepsilon},h\right\rangle
\]
i.e.
\[
\left\langle p_{\varepsilon},h\right\rangle =-\left\langle \nabla
p_{\varepsilon},\boldsymbol{S}\right\rangle =-\left\langle Z_{\varepsilon
},\boldsymbol{S}\right\rangle .
\]
In view of (\ref{1}) where
\[
\frac{\partial\boldsymbol{u}_{\varepsilon}}{\partial t}-\text{$\Div$}%
(A_{0}^{\varepsilon}\nabla\boldsymbol{u}_{\varepsilon})+(\boldsymbol{u}%
_{\varepsilon}\cdot\nabla)\boldsymbol{u}_{\varepsilon}+\nabla p_{\varepsilon
}-\kappa\mu_{\varepsilon}\nabla\phi_{\varepsilon}=g
\]

\[
Z_{\varepsilon}=g-\frac{\partial\boldsymbol{u}_{\varepsilon}}{\partial
t}+\text{$\Div$}(A_{0}^{\varepsilon}\nabla\boldsymbol{u}_{\varepsilon
})-(\boldsymbol{u}_{\varepsilon}\cdot\nabla)\boldsymbol{u}_{\varepsilon
}+\kappa\mu_{\varepsilon}\nabla\phi_{\varepsilon}.
\]
But by Lemma\ \ref{l1},
\begin{align*}
\left\vert \left\langle p_{\varepsilon},h\right\rangle \right\vert  &  \leq
C(\left\Vert \boldsymbol{S}\right\Vert _{L^{2}(0,T;H_{0}^{1}(Q)^{2}%
)}+\left\Vert \boldsymbol{S}\right\Vert _{L^{2}(Q_{T})^{2}})\\
&  \leq C\left\Vert \boldsymbol{S}\right\Vert _{L^{2}(0,T;H_{0}^{1}(Q)^{2})}\\
&  \leq C\left\Vert h\right\Vert _{L^{2}(Q_{T})^{2}}%
\end{align*}
where the last inequality is valid owing to [part (ii) of] Lemma \ref{le1}. By
the above inequality, $p_{\varepsilon}\in L^{2}(Q_{T})$ i.e., $p_{\varepsilon
}\in L^{2}(0,T;L_{0}^{2}(Q))$ with $\left\Vert p_{\varepsilon}\right\Vert
_{L^{2}(Q_{T})}\leq C$ for a constant $C>0$ independent of $\varepsilon$.
\end{proof}

Thus modulo extracting of subsequence (keeping the same notation) we have:
\begin{align}
\boldsymbol{u}_{\varepsilon}  &  \rightharpoonup\boldsymbol{u}_{0}\ \text{in
}L^{\infty}(0,T;\mathbb{H})\text{-weak}\ast\label{c1}\\
\boldsymbol{u}_{\varepsilon}  &  \rightharpoonup\boldsymbol{u}_{0}\text{ in
}L^{2}(0,T;\mathbb{V})\text{-weak}\label{c2}\\
\boldsymbol{u}_{\varepsilon}  &  \rightarrow\boldsymbol{u}\text{ in }%
L^{2}(0,T;\mathbb{H})\text{-strong}\label{c3}\\
\phi_{\varepsilon}  &  \rightharpoonup\phi_{0}\text{ in }L^{\infty}%
(0,T;H^{1}(Q))\text{-weak}\ast\label{c7}\\
\phi_{\varepsilon}  &  \rightarrow\phi_{0}\text{ in }L^{2}(0,T;L^{2}%
(Q))\text{-strong}\label{c7'}\\
p_{\varepsilon}  &  \rightarrow p_{0}\text{ in }L^{2}(Q_{T})\text{-weak.}
\label{c8}%
\end{align}

\section{Homogenization results: the periodic setting}

We assume once for all that $N=2$. We set $\mathcal{Y}=(0,1)^{N}$
($\mathcal{Y}$ considered as a subset of $\mathbb{R}_{y}^{N}$, the numerical
space $\mathbb{R}^{N}$ of variables $y=(y_{1},...,y_{N})$) and $\mathcal{T}%
=(0,1)$ ($\mathcal{T}$ considered as a subset of $\mathbb{R}_{\tau}$). Our
purpose is to study the homogenization problem associated to (\ref{1}%
)-(\ref{4}) under periodicity hypothesis on $A_{0}(t,x,.,.)$, i.e., under the
assumption that
\begin{equation}
A_{0}(t,x,.,.)\text{ is }\mathcal{Z}\text{-periodic, where }\mathcal{Z}%
=\mathcal{T}\times\mathcal{Y}, \label{1.0}%
\end{equation}
that is, $A_{0}(t,x,\tau+\ell,y+k)=A_{0}(t,x,\tau,y)$ for a.e. $(t,x,\tau
,y)\in Q_{T}\times\mathbb{R}\times\mathbb{R}^{2}$ and all $(\ell
,k)\in\mathbb{Z}\times\mathbb{Z}^{2}$.

\subsection{Preliminaries\label{sec2}}

Let us first recall that a function $u\in L_{loc}^{1}(\mathbb{R}^{1+N})$ is
said to be $\mathcal{Z}$-periodic if for each $(l,k)\in\mathbb{Z}^{1+N}$
($\mathbb{Z}$ denotes the integers), we have $u(\tau+l,y+k)=u(\tau,y)$ almost
everywhere (a.e.) in $(\tau,y)\in\mathbb{R}^{1+N}$. The space of all
$\mathcal{Z}$-periodic continuous complex functions on $\mathbb{R}_{\tau
,y}^{1+N}$ is denoted by $\mathcal{C}_{per}(\mathcal{T}\times\mathcal{Y})$,
and that of all $\mathcal{Z}$-periodic functions in $L_{loc}^{p}%
(\mathbb{R}_{\tau,y}^{1+N})$ ($1\leq p\leq\infty$) is denoted by $L_{per}%
^{p}(\mathcal{T}\times\mathcal{Y})$. In the sequel, we need the space
$H_{\#}^{1}(\mathcal{Y})$ of $\mathcal{Y}$-periodic functions $u\in
H_{loc}^{1}(\mathbb{R}_{y}^{N})$ such that $\int_{\mathcal{Y}}u(y)dy=0$.
Equipped with the gradient norm,
\[
\left\Vert u\right\Vert _{H_{\#}^{1}(\mathcal{Y})}=\left(  \int_{\mathcal{Y}%
}|\nabla_{y}u(y)|^{2}dy\right)  ^{1/2}\ \ (u\in H_{\#}^{1}(\mathcal{Y})),
\]
$H_{\#}^{1}(\mathcal{Y})$ is a Hilbert space.

Throughout the rest of the work, the letter $E$ will denote any ordinary
sequence $(\varepsilon_{n})_{n\in\mathbb{N}}$ with $0<\varepsilon_{n}\leq1$
and $\varepsilon_{n}\rightarrow0$ as $n\rightarrow\infty$.

\begin{definition}
\label{d1}\emph{1) A sequence }$(u_{\varepsilon})_{\varepsilon>0}\subset
L^{p}(Q_{T})$\emph{ (}$1\leq p<\infty$\emph{) is said to }weakly two-scale
converge in $L^{p}(Q_{T})$ \emph{to a limit} $u_{0}\in L^{p}(Q_{T};L_{per}%
^{p}(\mathcal{T}\times\mathcal{Y}))$\emph{ if as }$\varepsilon\rightarrow
0$\emph{, }%
\begin{equation}
\int_{Q_{T}}u_{\varepsilon}(t,x)\psi^{\varepsilon}(t,x)dxdt\rightarrow
\int_{Q_{T}}\int_{\mathcal{T}\times\mathcal{Y}}u_{0}(t,x,\tau,y)\psi
(t,x,\tau,y)d\tau dydtdx \label{de1}%
\end{equation}
\emph{for all }$\psi\in L_{per}^{p^{\prime}}(Q_{T};\mathcal{C}_{per}%
(\mathcal{T}\times\mathcal{Y}))$\emph{ (}$\frac{1}{p^{\prime}}=1-\frac{1}{p}%
$\emph{), where }$\psi^{\varepsilon}(t,x)=\psi(t,x,t/\varepsilon
,x/\varepsilon)$ $((t,x)\in Q_{T})$\emph{. We denote this by }$u_{\varepsilon
}\rightarrow u_{0}$ in $L^{p}(Q_{T})$-weak 2s.

\noindent\emph{2) The sequence }$(u_{\varepsilon})_{\varepsilon>0}$ \emph{in}
$L^{p}(Q_{T})$\emph{ is said to }strongly two-scale converge in $L^{p}(Q_{T}%
)$\emph{ to some }$u_{0}\in L^{p}(Q_{T};L_{per}^{p}(\mathcal{T}\times
\mathcal{Y}))$\emph{ if it is weakly two-scale convergent and further
}$\left\Vert u_{\varepsilon}\right\Vert _{L^{p}(Q_{T})}\rightarrow\left\Vert
u_{0}\right\Vert _{L^{p}(Q_{T}\times\mathcal{T}\times\mathcal{Y})}$\emph{. We
denote this by }$u_{\varepsilon}\rightarrow u_{0}$ in $L^{p}(Q_{T})$-strong 2s.
\end{definition}

We recall below some fundamental results that constitute the corner stone of
the two-scale convergence concept; see e.g. \cite{NG2006} for the justification.

\begin{theorem}
\label{t1}Assume that $1<p<\infty$ and further $E$ is a fundamental sequence.
Let a sequence $(u_{\varepsilon})_{\varepsilon\in E}$ be bounded in
$L^{p}(Q_{T})$. Then a subsequence $E^{\prime}$ can be extracted from $E$ and
there exist $u\in L^{p}(Q_{T}\times\mathcal{T}\times\mathcal{Y})$ such that
$(u_{\varepsilon})_{\varepsilon\in E^{\prime}}$ two-scale converges to $u$.
\end{theorem}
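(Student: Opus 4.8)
The plan is to follow the classical functional-analytic compactness argument underlying two-scale convergence (Nguetseng--Allaire). Put $X=L_{per}^{p^{\prime}}(Q_{T};\mathcal{C}_{per}(\mathcal{T}\times\mathcal{Y}))$ with $1/p^{\prime}=1-1/p$; since $1<p<\infty$ gives $1<p^{\prime}<\infty$ and $\mathcal{C}_{per}(\mathcal{T}\times\mathcal{Y})$ is separable, $X$ is a separable Banach space. For each $\varepsilon\in E$ I would introduce the linear functional $\Lambda_{\varepsilon}$ on $X$ defined by
\[
\Lambda_{\varepsilon}(\psi)=\int_{Q_{T}}u_{\varepsilon}(t,x)\psi^{\varepsilon}(t,x)\,dx\,dt,\qquad \psi^{\varepsilon}(t,x)=\psi(t,x,t/\varepsilon,x/\varepsilon).
\]
By H\"{o}lder's inequality and the uniform bound $\left\Vert u_{\varepsilon}\right\Vert _{L^{p}(Q_{T})}\leq C$, one has $\left\vert \Lambda_{\varepsilon}(\psi)\right\vert \leq C\left\Vert \psi^{\varepsilon}\right\Vert _{L^{p^{\prime}}(Q_{T})}\leq C\left\Vert \psi\right\Vert _{X}$, so the family $(\Lambda_{\varepsilon})_{\varepsilon\in E}$ is uniformly bounded in $X^{\ast}$.

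The crucial ingredient, and the step where I expect the real work to lie, is the \emph{mean value property} for periodic functions: for every $\psi\in X$,
\[
\left\Vert \psi^{\varepsilon}\right\Vert _{L^{p^{\prime}}(Q_{T})}\longrightarrow\left\Vert \psi\right\Vert _{L^{p^{\prime}}(Q_{T}\times\mathcal{T}\times\mathcal{Y})}\quad\text{as }\varepsilon\rightarrow0.
\]
I would prove this first for a finite sum of products $\varphi(t,x)w(\tau,y)$ with $\varphi$ continuous and $w$ a $\mathcal{Z}$-trigonometric polynomial, where the oscillation $w(t/\varepsilon,x/\varepsilon)$ is handled by a Riemann--Lebesgue type averaging lemma; the general case then follows by density together with the uniform continuity of $\psi$ in the fast variables $(\tau,y)$. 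Combined with the bound above, this yields $\limsup_{\varepsilon\in E}\left\vert \Lambda_{\varepsilon}(\psi)\right\vert \leq C\left\Vert \psi\right\Vert _{L^{p^{\prime}}(Q_{T}\times\mathcal{T}\times\mathcal{Y})}$ for each fixed $\psi$.

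Next, using the separability of $X$ and the uniform bound on $\left\Vert \Lambda_{\varepsilon}\right\Vert _{X^{\ast}}$, I would invoke the Banach--Alaoglu theorem (equivalently a diagonal extraction over a countable dense subset of $X$) to extract a subsequence $E^{\prime}\subset E$ and a functional $\Lambda\in X^{\ast}$ with $\Lambda_{\varepsilon}\rightarrow\Lambda$ in the weak-$\ast$ sense along $E^{\prime}$. Passing to the limit in the displayed estimate gives $\left\vert \Lambda(\psi)\right\vert \leq C\left\Vert \psi\right\Vert _{L^{p^{\prime}}(Q_{T}\times\mathcal{T}\times\mathcal{Y})}$, so $\Lambda$ extends by continuity to a bounded linear functional on the closure of $X$ in $L^{p^{\prime}}(Q_{T}\times\mathcal{T}\times\mathcal{Y})$, which is the whole space by density.

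Finally, by the Riesz representation of the dual of $L^{p^{\prime}}(Q_{T}\times\mathcal{T}\times\mathcal{Y})$ (the duality $(L^{p^{\prime}})^{\ast}=L^{p}$ on the product space, using $1<p^{\prime}<\infty$), there is a unique $u\in L^{p}(Q_{T}\times\mathcal{T}\times\mathcal{Y})$ with
\[
\Lambda(\psi)=\int_{Q_{T}}\int_{\mathcal{T}\times\mathcal{Y}}u(t,x,\tau,y)\psi(t,x,\tau,y)\,d\tau\,dy\,dt\,dx\qquad(\psi\in X).
\]
Unwinding the definition of $\Lambda$, this is precisely the two-scale convergence relation (\ref{de1}) for $(u_{\varepsilon})_{\varepsilon\in E^{\prime}}$ with limit $u$, which completes the proof. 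The only genuinely delicate point is the mean value property and the attendant density/approximation argument; everything else is a routine application of Banach--Alaoglu and duality.
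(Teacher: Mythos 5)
Your proposal is correct and is the classical Nguetseng--Allaire compactness argument: uniform boundedness of the functionals $\Lambda_{\varepsilon}$ on the separable space $L_{per}^{p^{\prime}}(Q_{T};\mathcal{C}_{per}(\mathcal{T}\times\mathcal{Y}))$, the mean value (Riemann--Lebesgue) property to upgrade the bound to the $L^{p^{\prime}}(Q_{T}\times\mathcal{T}\times\mathcal{Y})$-norm, weak-$\ast$ extraction, and Riesz representation. The paper does not prove Theorem \ref{t1} itself but delegates it to the reference \cite{NG2006}, whose argument is essentially the one you reconstruct, so there is nothing to fault beyond the routine (and acknowledged) care needed for the measurability of $\psi^{\varepsilon}$ and the density step in the mean value property.
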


\begin{theorem}
\label{t2}Let $E$ be a fundamental sequence. Suppose a sequence
$(u_{\varepsilon})_{\varepsilon\in E}$ is bounded in $L^{2}(0,T;H^{1}(Q))$.
Then a subsequence $E^{\prime}$ can be extracted from $E$ such that, as
$E^{\prime}\ni\varepsilon\rightarrow0$,
\[
u_{\varepsilon}\rightarrow u_{0}\text{ in }L^{2}(Q_{T})\text{-weak 2s}%
\]%
\[
u_{\varepsilon}\rightarrow\widetilde{u}_{0}\text{ in }L^{2}(0,T;H^{1}%
(Q))\text{-weak,}%
\]%
\[
\frac{\partial u_{\varepsilon}}{\partial x_{j}}\rightarrow\frac{\partial
u_{0}}{\partial x_{j}}+\frac{\partial u_{1}}{\partial y_{j}}\text{ in }%
L^{2}(Q_{T})\text{-weak 2s }(1\leq j\leq N)\text{,}%
\]
where $u_{0}\in L^{2}(0,T;L^{2}(\mathcal{T};H^{1}(Q)))$ with $\widetilde
{u}_{0}=\int_{\mathcal{T}}u_{0}d\tau$ and $u_{1}\in L^{2}(Q_{T};L^{2}%
(\mathcal{T};H_{\#}^{1}(\mathcal{Y})))$. If further $(\frac{\partial
u_{\varepsilon}}{\partial t})_{\varepsilon\in E}$ is bounded in $L^{2}%
(0,T;(H^{1}(Q))^{\ast})$ then $u_{0}$ is independent of $\tau$, that is,
$u_{0}\in L^{2}(0,T;H^{1}(Q))$.
\end{theorem}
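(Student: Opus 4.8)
The plan is to combine the two-scale compactness result (Theorem \ref{t1}) with the classical oscillating test function technique to pin down the structure of the limits. Since $(u_\varepsilon)_{\varepsilon\in E}$ is bounded in $L^2(0,T;H^1(Q))$, both $(u_\varepsilon)$ and each $(\partial u_\varepsilon/\partial x_j)$ are bounded in $L^2(Q_T)$; by reflexivity of $L^2(0,T;H^1(Q))$ and Theorem \ref{t1}, I would extract a single subsequence $E'$ (by diagonalisation) along which $u_\varepsilon\rightharpoonup\widetilde u_0$ weakly in $L^2(0,T;H^1(Q))$, $u_\varepsilon\to u_0$ weakly two-scale in $L^2(Q_T)$, and $\partial u_\varepsilon/\partial x_j\to w_j$ weakly two-scale, for some $u_0,w_1,\dots,w_N\in L^2(Q_T\times\mathcal{T}\times\mathcal{Y})$.

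The first structural step is to show that $u_0$ is independent of $y$. Testing $\partial u_\varepsilon/\partial x_j$ against $\varepsilon\psi^\varepsilon$, with $\psi\in\mathcal{D}(Q_T)\otimes\mathcal{C}^\infty_{per}(\mathcal{Z})$, and integrating by parts in $x_j$ via $\partial_{x_j}(\psi^\varepsilon)=(\partial_{x_j}\psi)^\varepsilon+\varepsilon^{-1}(\partial_{y_j}\psi)^\varepsilon$, the left-hand side tends to $0$ (because $\partial u_\varepsilon/\partial x_j$ is bounded in $L^2(Q_T)$ while $\varepsilon\psi^\varepsilon\to 0$ there), whereas the right-hand side tends to $-\int_{Q_T\times\mathcal{Z}}u_0\,\partial_{y_j}\psi$. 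Hence $\partial_{y_j}u_0=0$ distributionally for each $j$, so $u_0=u_0(t,x,\tau)$. Testing $u_\varepsilon$ against $\varphi\in\mathcal{D}(Q_T)$ and comparing the weak $L^2$ limit with the two-scale limit (using $|\mathcal{Y}|=1$ and the $y$-independence just obtained) then yields $\widetilde u_0=\int_\mathcal{T}u_0\,d\tau$.

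To identify $w_j$ I would test the $x$-gradient against $y$-solenoidal fields. For $\boldsymbol{\Phi}\in\mathcal{D}(Q_T)\otimes[\mathcal{C}^\infty_{per}(\mathcal{Z})]^N$ with $\operatorname{div}_y\boldsymbol{\Phi}=0$, integration by parts in $x$ annihilates the singular $\varepsilon^{-1}$ term precisely because $\sum_j\partial_{y_j}\Phi_j=0$, and passing to the two-scale limit gives $\sum_j\int_{Q_T\times\mathcal{Z}}w_j\Phi_j=-\sum_j\int_{Q_T\times\mathcal{Z}}u_0\,\partial_{x_j}\Phi_j$. Choosing $\boldsymbol{\Phi}$ independent of $y$ first shows $\int_\mathcal{Y}w_j\,dy=\partial u_0/\partial x_j$, in particular $u_0\in L^2(0,T;L^2(\mathcal{T};H^1(Q)))$; subtracting this macroscopic part, the fluctuation $w_j-\partial u_0/\partial x_j$ is, for a.e.\ $(t,x,\tau)$, orthogonal in $L^2(\mathcal{Y})^N$ to every $y$-divergence-free periodic field. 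The Weyl (de Rham) decomposition of $L^2(\mathcal{Y})^N$ then produces $u_1(t,x,\tau,\cdot)\in H^1_\#(\mathcal{Y})$ with $w_j-\partial u_0/\partial x_j=\partial u_1/\partial y_j$, and a measurable-selection/closedness argument upgrades this to $u_1\in L^2(Q_T;L^2(\mathcal{T};H^1_\#(\mathcal{Y})))$. I expect this extraction of the corrector $u_1$ — the passage from ``orthogonal to all solenoidal fields'' to ``is a $y$-gradient,'' together with the measurability of the selection — to be the main technical obstacle.

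Finally, for the last assertion, the extra bound on $\partial u_\varepsilon/\partial t$ is exploited through time-oscillating test functions that are independent of $y$. Taking $\psi=\psi(t,x,\tau)$ with compact support in $Q_T$ and pairing $\partial u_\varepsilon/\partial t$ with $\varepsilon\psi^\varepsilon$, the absence of $y$-dependence forces $\varepsilon\psi^\varepsilon\to 0$ in $L^2(0,T;H^1(Q))$, so this duality pairing vanishes in the limit; integrating by parts in $t$ and using $\varepsilon\partial_t(\psi^\varepsilon)=\varepsilon(\partial_t\psi)^\varepsilon+(\partial_\tau\psi)^\varepsilon$ yields $\int_{Q_T\times\mathcal{T}}u_0\,\partial_\tau\psi=0$, whence $\partial_\tau u_0=0$ and $u_0=\widetilde u_0\in L^2(0,T;H^1(Q))$.
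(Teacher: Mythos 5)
The paper does not actually prove Theorem \ref{t2}: it is recalled as a known compactness result for (time--space) two-scale convergence, with the justification delegated to the reference \cite{NG2006}. Your argument is therefore not ``a different route from the paper'' so much as a proof where the paper offers none, and it is correct: it is the classical Nguetseng--Allaire scheme. The extraction of a single subsequence by diagonalisation, the test against $\varepsilon\psi^{\varepsilon}$ to kill the $y$-dependence of $u_{0}$, the identification $\widetilde{u}_{0}=\int_{\mathcal{T}}u_{0}\,d\tau$ via $y$- and $\tau$-independent test functions, the use of $y$-solenoidal test fields to obtain both $\int_{\mathcal{Y}}w_{j}\,dy=\partial u_{0}/\partial x_{j}$ (hence $u_{0}\in L^{2}(0,T;L^{2}(\mathcal{T};H^{1}(Q)))$) and the orthogonality of the fluctuation to divergence-free fields, and finally the pairing of $\partial_{t}u_{\varepsilon}$ with $\varepsilon\psi^{\varepsilon}$ for $y$-independent $\psi$ (which is the step where the $L^{2}(0,T;(H^{1}(Q))^{\ast})$ bound is genuinely needed, since $\varepsilon\psi^{\varepsilon}\rightarrow0$ in $L^{2}(0,T;H^{1}(Q))$ precisely because $\psi$ carries no $y$-oscillation) are all sound. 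You correctly flag the only delicate point, namely converting ``orthogonal to all periodic solenoidal fields'' into ``is $\nabla_{y}u_{1}$ for some $u_{1}\in H_{\#}^{1}(\mathcal{Y})$'' with measurable dependence on $(t,x,\tau)$; this is handled exactly as you indicate, via the Helmholtz/de Rham decomposition of $L^{2}(\mathcal{Y})^{N}$ on the torus together with the boundedness of the projection onto $\nabla_{y}H_{\#}^{1}(\mathcal{Y})$ and of the inverse of $\nabla_{y}$ on that subspace (Poincar\'{e}--Wirtinger), which yields $u_{1}\in L^{2}(Q_{T};L^{2}(\mathcal{T};H_{\#}^{1}(\mathcal{Y})))$. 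No gaps.
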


\begin{theorem}
\label{t3}Let $1<p,q<\infty$ and $r\geq1$ be such that $1/r =1/p+1/q\leq1$.
Assume $(u_{\varepsilon})_{\varepsilon\in E}\subset L^{q}(Q_{T})$ is weakly
two-scale convergent in $L^{q}(Q_{T})$ to some $u_{0}\in L^{q}(Q_{T}%
;L_{per}^{q}(\mathcal{T}\times\mathcal{Y}))$ and $(v_{\varepsilon
})_{\varepsilon\in E}\subset L^{p}(Q_{T})$ is strongly two-scale convergent in
$L^{p}(Q_{T})$ to some $v_{0}\in L^{p}(Q_{T};L_{per}^{p}(\mathcal{T}%
\times\mathcal{Y}))$. Then the sequence $(u_{\varepsilon}v_{\varepsilon
})_{\varepsilon\in E}$ is weakly two-scale convergent in $L^{r}(Q_{T})$ to
$u_{0}v_{0}\in L^{r}(Q_{T};L^{r}(\mathcal{T}\times\mathcal{Y}))$.
\end{theorem}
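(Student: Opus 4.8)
The plan is to prove Theorem~\ref{t3}, which asserts that if $u_\varepsilon \to u_0$ weakly two-scale in $L^q(Q_T)$ and $v_\varepsilon \to v_0$ strongly two-scale in $L^p(Q_T)$, then $u_\varepsilon v_\varepsilon \to u_0 v_0$ weakly two-scale in $L^r(Q_T)$, where $1/r = 1/p + 1/q$. The core obstacle is that weak two-scale convergence does not pass to products on its own; we need the strong two-scale convergence of $v_\varepsilon$ to compensate, and the natural strategy is to approximate $v_0$ by a smooth oscillating test function and use a density argument together with the defining property of strong two-scale convergence, namely the norm convergence.

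Let me describe the main steps. First I would fix an arbitrary test function $\psi \in L_{per}^{r'}(Q_T; \mathcal{C}_{per}(\mathcal{T}\times\mathcal{Y}))$ (with $1/r' = 1 - 1/r$) and aim to show
\[
\int_{Q_T} u_\varepsilon v_\varepsilon \psi^\varepsilon \, dx\,dt \longrightarrow \int_{Q_T}\int_{\mathcal{T}\times\mathcal{Y}} u_0 v_0 \psi \, d\tau\,dy\,dt\,dx,
\]
where $\psi^\varepsilon(t,x) = \psi(t,x,t/\varepsilon,x/\varepsilon)$. The key algebraic move is to split off the oscillating behaviour of $v_\varepsilon$: I would choose a sequence $w_\delta \in L_{per}^p(Q_T;\mathcal{C}_{per}(\mathcal{T}\times\mathcal{Y}))$ of smooth functions approximating $v_0$ in $L^p(Q_T\times\mathcal{T}\times\mathcal{Y})$, and then write $v_\varepsilon \psi^\varepsilon = (v_\varepsilon - w_\delta^\varepsilon)\psi^\varepsilon + w_\delta^\varepsilon \psi^\varepsilon$. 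Since $\psi \cdot w_\delta$ is an admissible test function of class $L_{per}^{q'}(Q_T;\mathcal{C}_{per}(\mathcal{T}\times\mathcal{Y}))$ (because $1/q' = 1 - 1/q = 1/r' + 1/p$ by the exponent relation, and the product of a $\mathcal{C}_{per}$-function with an $L^{p}$-one stays in the right class after applying Hölder on the fast variables), the term $\int u_\varepsilon \, w_\delta^\varepsilon \psi^\varepsilon$ converges by the weak two-scale convergence of $u_\varepsilon$ directly to $\int\int u_0 w_\delta \psi$. The remaining term I control by Hölder's inequality: it is bounded by $\|u_\varepsilon\|_{L^q(Q_T)} \, \|(v_\varepsilon - w_\delta^\varepsilon)\psi^\varepsilon\|_{L^{q'}(Q_T)}$, and here is where the strong convergence enters, since $\|v_\varepsilon - w_\delta^\varepsilon\|_{L^p(Q_T)} \to \|v_0 - w_\delta\|_{L^p(Q_T\times\mathcal{T}\times\mathcal{Y})}$ can be made small by the norm-convergence property built into strong two-scale convergence.

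The hard part is the interchange of the two limits $\varepsilon \to 0$ and $\delta \to 0$, i.e. making the error estimate on the difference term uniform. I would argue as follows: the boundedness of $(u_\varepsilon)$ in $L^q(Q_T)$ (which follows from its weak two-scale convergence) gives a uniform constant $M$; then for the difference term I estimate
\[
\Big| \int_{Q_T} u_\varepsilon (v_\varepsilon - w_\delta^\varepsilon)\psi^\varepsilon \,dx\,dt \Big| \le M \, \|\psi\|_\infty \, \|v_\varepsilon - w_\delta^\varepsilon\|_{L^p(Q_T)}.
\]
Since $v_\varepsilon \to v_0$ strongly two-scale and $w_\delta^\varepsilon \to w_\delta$ strongly two-scale (smooth oscillating functions always converge strongly two-scale to their profiles), the difference $v_\varepsilon - w_\delta^\varepsilon$ converges strongly two-scale to $v_0 - w_\delta$, whence $\limsup_{\varepsilon\to 0}\|v_\varepsilon - w_\delta^\varepsilon\|_{L^p(Q_T)} = \|v_0 - w_\delta\|_{L^p(Q_T\times\mathcal{T}\times\mathcal{Y})}$. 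Taking $\varepsilon\to 0$ first and then $\delta \to 0$, the right-hand side tends to zero; meanwhile the main term $\int\int u_0 w_\delta \psi \to \int\int u_0 v_0 \psi$ by the $L^p$-convergence of $w_\delta$ to $v_0$ and Hölder on the product space. Combining these yields the claim, and the membership $u_0 v_0 \in L^r(Q_T;L^r(\mathcal{T}\times\mathcal{Y}))$ follows from a final Hölder inequality on $Q_T\times\mathcal{T}\times\mathcal{Y}$. The only subtlety I expect to require care is verifying that $\psi w_\delta$ genuinely lies in the admissible test class for $u_\varepsilon$ and that the norm-convergence of $v_\varepsilon - w_\delta^\varepsilon$ is legitimate, both of which rest on elementary Hölder bookkeeping in the fast variables.
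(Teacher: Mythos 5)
The paper does not actually prove Theorem \ref{t3}: it is recalled as one of the ``corner stone'' results of two-scale convergence with a pointer to \cite{NG2006}, so there is no in-house argument to compare against. Your proposal is the standard proof of this product lemma --- approximate the strong two-scale limit $v_0$ by admissible oscillating functions $w_\delta$, split $u_\varepsilon v_\varepsilon\psi^\varepsilon$ into a main term handled by the weak two-scale convergence of $u_\varepsilon$ against the admissible test function $w_\delta\psi$, and an error term killed by H\"older --- and your exponent bookkeeping $1/q'=1/p+1/r'$ is correct.

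Two points need repair. The substantive one is the step ``$v_\varepsilon-w_\delta^\varepsilon$ converges strongly two-scale to $v_0-w_\delta$, whence $\limsup_\varepsilon\|v_\varepsilon-w_\delta^\varepsilon\|_{L^p(Q_T)}=\|v_0-w_\delta\|_{L^p(Q_T\times\mathcal{T}\times\mathcal{Y})}$.'' With Definition \ref{d1}, strong two-scale convergence is weak two-scale convergence plus convergence of norms; since the norm is not additive, the difference of two strongly two-scale convergent sequences is not automatically strongly two-scale convergent, and this is precisely where the whole argument lives, so it cannot be asserted without proof. For $p=2$ (the only case the paper uses) you can prove it by expanding $\|v_\varepsilon-w_\delta^\varepsilon\|_{L^2}^2=\|v_\varepsilon\|_{L^2}^2-2\int_{Q_T}v_\varepsilon w_\delta^\varepsilon\,dxdt+\|w_\delta^\varepsilon\|_{L^2}^2$: the first term converges by the definition of strong two-scale convergence, the second by weak two-scale convergence tested against $w_\delta$, the third by the mean-value property of admissible functions. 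For general $p\in(1,\infty)$ you need an extra ingredient, e.g.\ the unfolding operator combined with the Radon--Riesz property of the uniformly convex space $L^p$ (weak convergence plus convergence of norms implies strong convergence of the unfolded sequence); state and use such a lemma explicitly. The minor point: your error bound $M\|\psi\|_\infty\|v_\varepsilon-w_\delta^\varepsilon\|_{L^p}$ presumes $\psi$ bounded in $(t,x)$, which a general $\psi\in L^{r'}_{per}(Q_T;\mathcal{C}_{per}(\mathcal{T}\times\mathcal{Y}))$ is not; replace it by the three-exponent H\"older inequality with the factor $\|\psi^\varepsilon\|_{L^{r'}(Q_T)}\le\|\psi\|_{L^{r'}(Q_T;\mathcal{C}_{per}(\mathcal{T}\times\mathcal{Y}))}$, or first reduce to bounded $\psi$ by a density argument using that $(u_\varepsilon v_\varepsilon)$ is bounded in $L^r(Q_T)$ (a reduction that is itself delicate when $r=1$).
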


\subsection{Homogenization results}

Here we assume that the coefficients of the problem (\ref{1})-(\ref{4}) are
periodic, that is \ the matrix $A_{0}(t,x,\cdot,\cdot)$ has periodic entries.

Set
\begin{equation}%
\begin{array}
[c]{l}%
\mathcal{V}=\left\{  \boldsymbol{u}\in L^{2}(0,T;\mathbb{V}):\boldsymbol{u}%
^{\prime}=\displaystyle{\frac{\partial\boldsymbol{u}}{\partial t}}\in
L^{2}(0,T;\mathbb{V}^{\ast})\right\} \\
\mathcal{W}=\left\{  \phi\in L^{2}(0,T;H^{1}(Q)):\phi^{\prime}\in
L^{2}(0,T;(H^{1}(Q)))^{\ast}\right\}  .
\end{array}
\label{3.0}%
\end{equation}
The spaces $\mathcal{V}$ and $\mathcal{W}$ are Hilbert spaces with obvious
norms. Moreover the embeddings $\mathcal{V}\hookrightarrow L^{2}%
(0,T;\mathbb{H})$ and $\mathcal{W}\hookrightarrow L^{2}(0,T;L^{2}(Q))$ are compact.

Now, in view of Lemma \ref{l1}, the sequences $(\boldsymbol{u}_{\varepsilon
})_{\varepsilon>0}$ and $(\phi_{\varepsilon})_{\varepsilon>0}$ are bounded in
$\mathcal{V}$ and $\mathcal{W}$ respectively. Hence given a fundamental
sequence $E$, there exist a subsequence $E^{\prime}$ of $E$ a couple
$(\boldsymbol{u}_{0},\phi_{0})\in\mathcal{V}\times\mathcal{W}$ such that, as
$E^{\prime}\ni\varepsilon\rightarrow0$,
\begin{equation}
\boldsymbol{u}_{\varepsilon}\rightarrow\boldsymbol{u}_{0}\text{ in
}\mathcal{V}\text{-weak\ \ \ \ \ \ \ \ \ \ \ \ \ \ \ \ \ \ } \label{3.1}%
\end{equation}%
\begin{equation}
\boldsymbol{u}_{\varepsilon}\rightarrow\boldsymbol{u}_{0}\text{ in }%
L^{2}(0,T;\mathbb{H})\text{-strong} \label{3.2}%
\end{equation}%
\begin{equation}
\phi_{\varepsilon}\rightarrow\phi_{0}\text{ in }\mathcal{W}%
\text{-weak\ \ \ \ \ \ \ \ \ \ \ \ \ \ \ } \label{3.3}%
\end{equation}%
\begin{equation}
\phi_{\varepsilon}\rightarrow\phi_{0}\text{ in }L^{2}%
(Q)\text{-strong.\ \ \ \ \ \ \ } \label{3.4}%
\end{equation}
Also there exist a subsequence of $E^{\prime}$ still denoted by $E^{\prime}$
and a function $\widetilde{\mu}_{0}\in L(0,T;L_{per}^{2}(\mathcal{T}%
;H^{1}(Q)))$ such that
\begin{equation}
\mu_{\varepsilon}\rightarrow\widetilde{\mu}_{0}\text{ in }L^{2}(Q_{T}%
)\text{-weak 2s} \label{3.5'}%
\end{equation}
and
\begin{equation}
\mu_{\varepsilon}\rightarrow\mu_{0}\text{ in }L^{2}(0,T;H^{1}(Q))\text{-weak
with }\mu_{0}=\int_{\mathcal{T}}\widetilde{\mu}_{0}d\tau. \label{3.5}%
\end{equation}
Taking once again into account the estimates (\ref{q3}), (\ref{q4}) and
(\ref{q5}) and appealing to Theorem \ref{t2}, we derive by a diagonal process,
the existence of a subsequence of $E^{\prime}$ not relabeled and of functions
$\boldsymbol{u}_{1}\in L^{2}(Q_{T};L_{per}^{2}(\mathcal{T};H_{\#}%
^{1}(\mathcal{Y})^{2}))$, $\phi_{1}$, $\mu_{1}\in L^{2}(Q_{T};L_{per}%
^{2}(\mathcal{T};H_{\#}^{1}(\mathcal{Y})))$ and $\xi\in L^{2}(Q_{T}%
;L_{per}^{2}(\mathcal{T}\times\mathcal{Y}))$ such that, as $E^{\prime}%
\ni\varepsilon\rightarrow0$, for $i=1,2$, we have
\begin{equation}
\frac{\partial\boldsymbol{u}_{\varepsilon}}{\partial x_{i}}\rightarrow
\frac{\partial\boldsymbol{u}_{0}}{\partial x_{i}}+\frac{\partial
\boldsymbol{u}_{1}}{\partial y_{i}}\text{ in }L^{2}(Q_{T})^{2}\text{-weak 2s}
\label{3.6}%
\end{equation}%
\begin{equation}
\frac{\partial\phi_{\varepsilon}}{\partial x_{i}}\rightarrow\frac{\partial
\phi_{0}}{\partial x_{i}}+\frac{\partial\phi_{1}}{\partial y_{i}}\text{ in
}L^{2}(Q_{T})\text{-weak 2s} \label{3.7}%
\end{equation}%
\begin{equation}
\frac{\partial\mu_{\varepsilon}}{\partial x_{i}}\rightarrow\frac
{\partial\widetilde{\mu}_{0}}{\partial x_{i}}+\frac{\partial\mu_{1}}{\partial
y_{i}}\text{ in }L^{2}(Q_{T})\text{-weak 2s} \label{3.8}%
\end{equation}%
\begin{equation}
f(\phi_{\varepsilon})\rightarrow\xi\text{ in }L^{2}(Q_{T})\text{-weak 2s.
\ \ \ \ \ \ \ \ \ \ \ \ \ \ \ \ } \label{3.8'}%
\end{equation}
Arguing as above, we deduce from Lemma \ref{le2} the existence of $p\in
L^{2}(Q_{T};L_{per}^{2}(\mathcal{T}\times\mathcal{Y}))$ such that, as
$E^{\prime}\ni\varepsilon\rightarrow0$,
\begin{equation}
p_{\varepsilon}\rightarrow p\text{ in }L^{2}(Q_{T})\text{-weak 2s.
\ \ \ \ \ \ \ \ \ \ \ \ \ \ \ \ \ \ \ \ \ \ \ \ \ } \label{3.9}%
\end{equation}
Next, since $\operatorname{div}\boldsymbol{u}_{\varepsilon}=0$, we can easily
check that $\operatorname{div}_{y}\boldsymbol{u}_{1}=0$, so that
\[
\boldsymbol{u}_{1}\in V_{\mathcal{Y}}=\left\{  \boldsymbol{w}\in L^{2}%
(Q_{T};L_{per}^{2}(\mathcal{T};H_{\#}^{1}(\mathcal{Y})^{2}%
)):\operatorname{div}_{y}\boldsymbol{w}=0\right\}  .
\]
Now, let
\[
\mathbb{F}_{0}^{1}=\mathcal{V}\times V_{\mathcal{Y}},\ \ \mathbb{F}_{0}%
^{2}=L^{2}(0,T;H^{1}(Q))\times W_{\mathcal{Y}}%
\]
where $W_{\mathcal{Y}}=L^{2}(Q_{T};H_{\#}^{1}(\mathcal{Y}))$, and set
$\mathbb{E}_{0}^{1}=\mathbb{F}_{0}^{1}\times\mathbb{F}_{0}^{2}$. For an
element $\boldsymbol{v}=(v_{0},v_{1})\in\mathbb{F}_{0}^{k}$ ($k=1,2$) we set
\[
\mathbb{D}\boldsymbol{v}=\nabla v_{0}+\nabla_{y}v_{1}=(\mathbb{D}%
_{j}\boldsymbol{v})_{1\leq j\leq2}\text{ where }\mathbb{D}_{j}\boldsymbol{v}%
=\frac{\partial v_{0}}{\partial x_{j}}+\frac{\partial v_{1}}{\partial y_{j}}.
\]
With the above notation, we notice that $\boldsymbol{u}=(\boldsymbol{u}%
_{0},\boldsymbol{u}_{1})\in\mathbb{F}_{0}^{1}$ and $\Phi=(\phi_{0},\phi
_{1}),\boldsymbol{\mu}=(\mu_{0},\mu_{1})\in\mathbb{F}_{0}^{2}$. We define the
smoothed counterpart of $W_{\mathcal{Y}}$ by $\mathcal{E}=\mathcal{C}%
_{0}^{\infty}(Q_{T})\otimes(\mathcal{C}_{per}^{\infty}(\mathcal{T}%
)\otimes\mathcal{C}_{\#}^{\infty}(\mathcal{Y}))$ with $\mathcal{C}%
_{\#}^{\infty}(\mathcal{Y})=\mathcal{C}_{per}^{\infty}(\mathcal{Y}%
)/\mathbb{R}$.

With this in mind, the following result holds true.

\begin{proposition}
\label{p3.1}Let $\boldsymbol{u}=(\boldsymbol{u}_{0},\boldsymbol{u}_{1})$,
$\Phi=(\phi_{0},\phi_{1})$, $\boldsymbol{\mu}=(\widetilde{\mu}_{0},\mu_{1})$,
$\xi$ and $p$ be determined by the relations \emph{(\ref{3.1})-(\ref{3.9})}.
Then they solve the following variational problem \emph{(\ref{3.10}%
)-(\ref{3.12})}
\begin{equation}
\left\{
\begin{array}
[c]{l}%
-\displaystyle{\int_{Q_{T}}}\boldsymbol{u}_{0}\frac{\partial\Psi_{0}}{\partial
t}dxdt+%
%TCIMACRO{\diint _{Q_{T}\times\mathcal{T}\times\mathcal{Y}}}%
%BeginExpansion
{\displaystyle\iint_{Q_{T}\times\mathcal{T}\times\mathcal{Y}}}
%EndExpansion
A_{0}\mathbb{D}\boldsymbol{u}\cdot\mathbb{D}\Psi dxdtdyd\tau\\
\ \ +\displaystyle{\int_{Q_{T}}}(\boldsymbol{u}_{0}\cdot\nabla)\boldsymbol{u}%
_{0}\Psi_{0}dxdt-%
%TCIMACRO{\diint _{Q_{T}\times\mathcal{T}\times\mathcal{Y}}}%
%BeginExpansion
{\displaystyle\iint_{Q_{T}\times\mathcal{T}\times\mathcal{Y}}}
%EndExpansion
p(\operatorname{div}\Psi_{0}+\operatorname{div}_{y}\Psi_{1})dxdtdyd\tau\\
\ \ \ \ -\kappa\displaystyle{\int_{Q_{T}}}\mu_{0}\nabla\phi_{0}\cdot\Psi
_{0}dxdt=\int_{Q_{T}}g\Psi_{0}dxdt;
\end{array}
\right.  \label{3.10}%
\end{equation}%
\begin{equation}
-\int_{Q_{T}}\phi_{0}\frac{\partial\varphi_{0}}{\partial t}dxdt+\int_{Q_{T}%
}\phi_{0}\boldsymbol{u}_{0}\cdot\nabla\varphi_{0}dxdt+%
%TCIMACRO{\diint _{Q_{T}\times\mathcal{T}\times\mathcal{Y}}}%
%BeginExpansion
{\displaystyle\iint_{Q_{T}\times\mathcal{T}\times\mathcal{Y}}}
%EndExpansion
\mathbb{D}\boldsymbol{\mu}\cdot\mathbb{D}\boldsymbol{\varphi}dxdtdyd\tau=0;
\label{3.11}%
\end{equation}%
\begin{equation}
\int_{Q_{T}\times\mathcal{T}}\widetilde{\mu}_{0}\chi_{0}dxdtd\tau=\lambda%
%TCIMACRO{\diint _{Q_{T}\times\mathcal{T}\times\mathcal{Y}}}%
%BeginExpansion
{\displaystyle\iint_{Q_{T}\times\mathcal{T}\times\mathcal{Y}}}
%EndExpansion
\mathbb{D}\Phi\cdot\mathbb{D}\boldsymbol{\chi}dxdtdyd\tau+\alpha%
%TCIMACRO{\diint _{Q_{T}\times\mathcal{T}\times\mathcal{Y}}}%
%BeginExpansion
{\displaystyle\iint_{Q_{T}\times\mathcal{T}\times\mathcal{Y}}}
%EndExpansion
\xi\chi_{0}dxdtdyd\tau\label{3.12}%
\end{equation}
for all $\Psi=(\Psi_{0},\Psi_{1})\in\mathcal{C}_{0}^{\infty}(Q_{T})^{2}%
\times(\mathcal{E})^{2}$, $\boldsymbol{\varphi}=(\varphi_{0},\varphi_{1}%
)\in\mathcal{C}_{0}^{\infty}(Q_{T})\times\mathcal{E}$ and $\boldsymbol{\chi
}=(\chi_{0},\chi_{1})\in\mathcal{C}_{0}^{\infty}(Q_{T})\otimes\mathcal{C}%
_{per}^{\infty}(\mathcal{T})\times\mathcal{E}$.
\end{proposition}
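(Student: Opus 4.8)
The plan is to insert into each of the weak formulations (\ref{1'}), (\ref{3'}) and (\ref{4'}) oscillating test functions of corrector type and then pass to the limit term by term, using the two-scale convergences (\ref{3.1})--(\ref{3.9}) together with Theorems \ref{t1}--\ref{t3}. Concretely, in the momentum balance I would use the distributional form of (\ref{1}) (so that the pressure reappears and general, not divergence-free, test functions are admissible) tested against $\psi_\varepsilon(t,x)=\Psi_0(t,x)+\varepsilon\Psi_1^\varepsilon(t,x)$ with $\Psi_1^\varepsilon(t,x)=\Psi_1(t,x,t/\varepsilon,x/\varepsilon)$ and $(\Psi_0,\Psi_1)\in\mathcal{C}_0^\infty(Q_T)^2\times(\mathcal{E})^2$; in (\ref{3'}) against $\varphi_\varepsilon=\varphi_0+\varepsilon\varphi_1^\varepsilon$; and in (\ref{4'}) against $\chi_\varepsilon=\chi_0^\varepsilon+\varepsilon\chi_1^\varepsilon$, where $\chi_0^\varepsilon(t,x)=\chi_0(t,x,t/\varepsilon)$ carries the admissible time-oscillation allowed by the test space. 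The role of the $\varepsilon$-scaled corrector is that $\nabla\psi_\varepsilon=\nabla\Psi_0+(\nabla_y\Psi_1)^\varepsilon+\varepsilon(\nabla_x\Psi_1)^\varepsilon$ converges strongly in the two-scale sense to $\mathbb{D}\Psi$, and similarly for $\varphi_\varepsilon$ and $\chi_\varepsilon$, so that each gradient pairs correctly with the weak two-scale limits (\ref{3.6})--(\ref{3.8}).

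For the linear terms the passage is routine once this is set up. In the time-derivative term I would integrate by parts in $t$ (the correctors being compactly supported in $Q_T$) to get $-\int_{Q_T}\boldsymbol{u}_\varepsilon\cdot\partial_t\psi_\varepsilon$; the only genuinely oscillating contribution, $-\int_{Q_T}\boldsymbol{u}_\varepsilon\cdot(\partial_\tau\Psi_1)^\varepsilon$, tends to $\iint\boldsymbol{u}_0\,\partial_\tau\Psi_1$, which vanishes because $\boldsymbol{u}_0$ is independent of $\tau$ (Theorem \ref{t2}, via the bound (\ref{n4})) while $\Psi_1$ is $\mathcal{T}$-periodic, leaving only $-\int_{Q_T}\boldsymbol{u}_0\,\partial_t\Psi_0$. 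For the diffusion term I would use the symmetry of $A_0$ to write $\int_{Q_T}A_0^\varepsilon\nabla\boldsymbol{u}_\varepsilon\cdot\nabla\psi_\varepsilon=\int_{Q_T}\nabla\boldsymbol{u}_\varepsilon\cdot(A_0^\varepsilon\nabla\psi_\varepsilon)$, note that $A_0^\varepsilon\nabla\psi_\varepsilon$ converges strongly two-scale to $A_0\mathbb{D}\Psi$ (here $A_0\in\mathcal{C}(\overline{Q}_T;L^\infty)$ is an admissible test function), and combine with (\ref{3.6}) through the weak--strong product of Theorem \ref{t3} to reach $\iint A_0\mathbb{D}\boldsymbol{u}\cdot\mathbb{D}\Psi$. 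The pressure term $\int_{Q_T}p_\varepsilon\,\Div\psi_\varepsilon$ passes to the limit from (\ref{3.9}) since $\Div\psi_\varepsilon\to\Div\Psi_0+\Div_{y}\Psi_1$, and the diffusion terms of (\ref{3'}), (\ref{4'}) are treated identically with $A_0$ replaced by the identity, using (\ref{3.7}), (\ref{3.8}) and (\ref{3.5'}).

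The nonlinear convective terms rely on the strong compactness already recorded. Writing $(\boldsymbol{u}_\varepsilon\cdot\nabla)\boldsymbol{u}_\varepsilon=\Div(\boldsymbol{u}_\varepsilon\otimes\boldsymbol{u}_\varepsilon)$ by (\ref{2}), the momentum convective term equals $-\int_{Q_T}(\boldsymbol{u}_\varepsilon\otimes\boldsymbol{u}_\varepsilon):\nabla\psi_\varepsilon$; interpolating the strong convergence (\ref{3.2}) against the $L^4(Q_T)$ bound from (\ref{q4}) and the 2D Ladyzhenskaya inequality gives $\boldsymbol{u}_\varepsilon\otimes\boldsymbol{u}_\varepsilon\to\boldsymbol{u}_0\otimes\boldsymbol{u}_0$ strongly in $L^p(Q_T)$ for some $p>1$, and since the oscillating part $(\nabla_y\Psi_1)^\varepsilon$ has zero mean over $\mathcal{T}\times\mathcal{Y}$ the limit is $\int_{Q_T}(\boldsymbol{u}_0\cdot\nabla)\boldsymbol{u}_0\cdot\Psi_0$. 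The transport term in (\ref{3'}) is analogous: $\boldsymbol{u}_\varepsilon\phi_\varepsilon\to\boldsymbol{u}_0\phi_0$ strongly by (\ref{3.2}), (\ref{3.4}) and (\ref{q3}), so pairing against $\nabla\varphi_\varepsilon$ yields $\int_{Q_T}\phi_0\boldsymbol{u}_0\cdot\nabla\varphi_0$, the $\nabla_y\varphi_1$ contribution again dropping out by zero-mean.

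The delicate point, which I would isolate as a lemma, is the capillary term $-\kappa\int_{Q_T}\mu_\varepsilon\nabla\phi_\varepsilon\cdot\psi_\varepsilon$, since it couples the two \emph{only weakly} convergent sequences $\mu_\varepsilon$ and $\nabla\phi_\varepsilon$. The remedy is to upgrade $\nabla\phi_\varepsilon$ to strong $L^2(Q_T)$ convergence by testing (\ref{4'}) with $\chi=\phi_\varepsilon-\phi_0\in L^2(0,T;H^1(Q))$, which gives
\[
\lambda\int_{Q_T}\nabla\phi_\varepsilon\cdot\nabla(\phi_\varepsilon-\phi_0)=\int_{Q_T}\mu_\varepsilon(\phi_\varepsilon-\phi_0)-\alpha\int_{Q_T}f(\phi_\varepsilon)(\phi_\varepsilon-\phi_0).
\]
Both right-hand terms tend to $0$ because $\mu_\varepsilon$ and $f(\phi_\varepsilon)$ are bounded in $L^2(Q_T)$ by (\ref{q5}) and (\ref{3.12'}) while $\phi_\varepsilon-\phi_0\to0$ strongly in $L^2(Q_T)$ by (\ref{3.4}); hence $\int_{Q_T}|\nabla\phi_\varepsilon|^2\to\int_{Q_T}|\nabla\phi_0|^2$, which together with the weak convergence $\nabla\phi_\varepsilon\rightharpoonup\nabla\phi_0$ from (\ref{3.3}) yields $\nabla\phi_\varepsilon\to\nabla\phi_0$ strongly in $L^2(Q_T)$ (and incidentally forces $\phi_1=0$). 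Then $\nabla\phi_\varepsilon\cdot\psi_\varepsilon\to\nabla\phi_0\cdot\Psi_0$ strongly in $L^2(Q_T)$, and pairing against the weakly convergent $\mu_\varepsilon\rightharpoonup\mu_0$ of (\ref{3.5}) gives $-\kappa\int_{Q_T}\mu_0\nabla\phi_0\cdot\Psi_0$. Collecting all the limits produces (\ref{3.10})--(\ref{3.12}), after which a density argument extends them from the smooth correctors to the stated test spaces. I expect this capillary term to be the sole real obstacle; the remaining work is bookkeeping of two-scale limits.
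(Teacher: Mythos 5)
Your proposal is correct and, for most terms, coincides with the paper's proof: the same oscillating test functions $\Psi_0+\varepsilon\Psi_1^{\varepsilon}$, $\varphi_0+\varepsilon\varphi_1^{\varepsilon}$, $\chi_0^{\varepsilon}+\varepsilon\chi_1^{\varepsilon}$, the same weak--strong two-scale pairings for the viscous, pressure and time-derivative terms, and the same compactness for the convective terms. The one place where you genuinely diverge is the capillary term $\kappa\int_{Q_T}\mu_{\varepsilon}\nabla\phi_{\varepsilon}\cdot\Psi_{\varepsilon}\,dxdt$. The paper handles it by integrating by parts in $x$, rewriting it as $-\int_{Q_T}\left(\mu_{\varepsilon}\phi_{\varepsilon}\operatorname{div}\Psi_{\varepsilon}+\phi_{\varepsilon}\Psi_{\varepsilon}\cdot\nabla\mu_{\varepsilon}\right)dxdt$, so that the strongly convergent $\phi_{\varepsilon}$ is paired with the weakly two-scale convergent $\mu_{\varepsilon}$ and $\nabla\mu_{\varepsilon}$, and then reassembling $\int_{Q_T}\mu_0\nabla\phi_0\cdot\Psi_0\,dxdt$ using $\int_{\mathcal{Y}}\operatorname{div}_y\Psi_1\,dy=0$. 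You instead upgrade $\nabla\phi_{\varepsilon}$ to strong $L^2(Q_T)$ convergence by testing \eqref{4'} with $\chi=\phi_{\varepsilon}-\phi_0$ (admissible, since $\phi_{\varepsilon},\phi_0\in L^2(0,T;H^1(Q))$) and exploiting the $L^2$ bounds \eqref{q5} and \eqref{3.12'} together with \eqref{3.4}; the norm convergence plus weak convergence then gives strong convergence of the gradient. Both arguments are sound. The paper's route is lighter (no auxiliary energy identity) and never needs strong gradient convergence; yours costs one extra lemma but buys more: it delivers $\nabla\phi_{\varepsilon}\rightarrow\nabla\phi_0$ strongly and hence $\phi_1=0$ already at this stage, a fact the paper only extracts later from the cell problem \eqref{3.12b}. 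One minor omission: you do not explicitly treat $\int_{Q_T}f(\phi_{\varepsilon})\boldsymbol{\chi}_{\varepsilon}\,dxdt$, but for the proposition as stated this is just the weak--strong two-scale pairing of \eqref{3.8'} with \eqref{3.23} (the identification $\int_{\mathcal{T}\times\mathcal{Y}}\xi\,dyd\tau=f(\phi_0)$, which the paper also records here, is only needed later).
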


\begin{proof}
Let $\Psi=(\Psi_{0},\Psi_{1})$, $\boldsymbol{\varphi}=(\varphi_{0},\varphi
_{1})$ and $\boldsymbol{\chi}=(\chi_{0},\chi_{1})$ be as above, and define
\begin{align*}
\Psi_{\varepsilon}  &  =\Psi_{0}+\varepsilon\Psi_{1}^{\varepsilon}\text{ with
}\Psi_{1}^{\varepsilon}(t,x)=\Psi_{1}\left(  t,x,\frac{t}{\varepsilon}%
,\frac{x}{\varepsilon}\right)  \text{ for }(t,x)\in Q_{T}\\
\boldsymbol{\varphi}_{\varepsilon}  &  =\varphi_{0}+\varepsilon\varphi
_{1}^{\varepsilon}\text{ and }\boldsymbol{\chi}_{\varepsilon}=\chi
_{0}^{\varepsilon}+\varepsilon\chi_{1}^{\varepsilon}\text{ with }\chi
_{0}^{\varepsilon}(t,x)=\chi_{0}\left(  t,x,\frac{t}{\varepsilon}\right)  ,
\end{align*}
$\varphi_{1}^{\varepsilon}$ and $\chi_{1}^{\varepsilon}$ being defined like
$\Psi_{1}^{\varepsilon}$. Taking $(\Psi_{\varepsilon},\boldsymbol{\varphi
}_{\varepsilon},\boldsymbol{\chi}_{\varepsilon})$ as test function in the
variational form of (\ref{1})-(\ref{4}), we obtain
\begin{equation}
\left\{
\begin{array}
[c]{l}%
-\displaystyle{\int_{Q_{T}}}\boldsymbol{u}_{\varepsilon}\frac{\partial
\Psi_{\varepsilon}}{\partial t}dxdt+\int_{Q_{T}}A_{0}^{\varepsilon}%
\nabla\boldsymbol{u}_{\varepsilon}\cdot\nabla\Psi_{\varepsilon}dxdt+\int
_{Q_{T}}(\boldsymbol{u}_{\varepsilon}\cdot\nabla)\boldsymbol{u}_{\varepsilon
}\Psi_{\varepsilon}dxdt\\
\ \ -\kappa\displaystyle{\int_{Q_{T}}}\mu_{\varepsilon}\nabla\phi
_{\varepsilon}\cdot\Psi_{\varepsilon}dxdt=\int_{Q_{T}}g\Psi_{\varepsilon
}dxdt+\int_{Q_{T}}p_{\varepsilon}\operatorname{div}\Psi_{\varepsilon}dxdt;
\end{array}
\right.  \label{3.13}%
\end{equation}%
\begin{equation}
-\int_{Q_{T}}\phi_{\varepsilon}\frac{\partial\boldsymbol{\varphi}%
_{\varepsilon}}{\partial t}dxdt+\int_{Q_{T}}\phi_{\varepsilon}\boldsymbol{u}%
_{\varepsilon}\cdot\nabla\boldsymbol{\varphi}_{\varepsilon}dxdt+\int_{Q_{T}%
}\nabla\mu_{\varepsilon}\cdot\nabla\boldsymbol{\varphi}_{\varepsilon}dxdt=0;
\label{3.14}%
\end{equation}%
\begin{equation}
\int_{Q_{T}}\mu_{\varepsilon}\boldsymbol{\chi}_{\varepsilon}dxdt=\lambda
\int_{Q_{T}}\nabla\Phi_{\varepsilon}\cdot\nabla\boldsymbol{\chi}_{\varepsilon
}dxdt+\alpha\int_{Q_{T}}f(\phi_{\varepsilon})\boldsymbol{\chi}_{\varepsilon
}dxdt. \label{3.15}%
\end{equation}
Using the identities
\[
\frac{\partial\Psi_{\varepsilon}}{\partial t}=\frac{\partial\Psi_{0}}{\partial
t}+\left(  \frac{\partial\Psi_{1}}{\partial\tau}\right)  ^{\varepsilon
}+\varepsilon\left(  \frac{\partial\Psi_{1}}{\partial t}\right)
^{\varepsilon}\text{ and a similar equality for }\frac{\partial
\boldsymbol{\varphi}_{\varepsilon}}{\partial t},
\]%
\[
\nabla\Psi_{\varepsilon}=\nabla\Psi_{0}+(\nabla_{y}\Psi_{1})^{\varepsilon
}+\varepsilon(\nabla\Psi_{1})^{\varepsilon}%
\]
and similar equalities for $\nabla\boldsymbol{\varphi}_{\varepsilon}$ and
$\nabla\boldsymbol{\chi}_{\varepsilon}$, we infer that, as $\varepsilon
\rightarrow0$,
\begin{equation}
\frac{\partial\Psi_{\varepsilon}}{\partial t}\rightarrow\frac{\partial\Psi
_{0}}{\partial t}\text{ in }L^{2}(0,T;H^{-1}(Q)^{2})\text{-weak} \label{3.16}%
\end{equation}%
\begin{equation}
\frac{\partial\boldsymbol{\varphi}_{\varepsilon}}{\partial t}\rightarrow
\frac{\partial\boldsymbol{\varphi}_{0}}{\partial t}\text{ in }L^{2}%
(0,T;H^{-1}(Q))\text{-weak} \label{3.17}%
\end{equation}%
\begin{equation}
\nabla\Psi_{\varepsilon}\rightarrow\nabla\Psi_{0}+\nabla_{y}\Psi_{1}\text{ in
}L^{2}(Q_{T})^{2\times2}\text{-strong 2s} \label{3.18}%
\end{equation}%
\begin{equation}
\nabla\boldsymbol{\varphi}_{\varepsilon}\rightarrow\nabla\varphi_{0}%
+\nabla_{y}\varphi_{1}\text{ in }L^{2}(Q_{T})^{2}\text{-strong 2s}
\label{3.19}%
\end{equation}%
\begin{equation}
\nabla\boldsymbol{\chi}_{\varepsilon}\rightarrow\nabla\chi_{0}+\nabla_{y}%
\chi_{1}\text{ in }L^{2}(Q_{T})^{2}\text{-strong 2s} \label{3.20}%
\end{equation}%
\begin{equation}
\Psi_{\varepsilon}\rightarrow\Psi_{0}\text{ in }L^{2}(Q_{T})^{2}\text{-strong}
\label{3.21}%
\end{equation}%
\begin{equation}
\boldsymbol{\varphi}_{\varepsilon}\rightarrow\varphi_{0}\text{ in }L^{2}%
(Q_{T})\text{-strong} \label{3.22}%
\end{equation}%
\begin{equation}
\boldsymbol{\chi}_{\varepsilon}\rightarrow\chi_{0}\text{ in }L^{2}%
(Q_{T})\text{-strong 2s.} \label{3.23}%
\end{equation}
Let us consider each of the equations (\ref{3.13})-(\ref{3.15}) separately.
Considering (\ref{3.13}), only the term $\int_{Q_{T}}\mu_{\varepsilon}%
\nabla\phi_{\varepsilon}\cdot\Psi_{\varepsilon}dxdt$ needs careful treatment.
To that end, we have
\begin{align*}
\int_{Q_{T}}\mu_{\varepsilon}\nabla\phi_{\varepsilon}\cdot\Psi_{\varepsilon
}dxdt  &  =-\int_{Q_{T}}\left(  \mu_{\varepsilon}\phi_{\varepsilon
}\operatorname{div}\Psi_{\varepsilon}+\phi_{\varepsilon}\Psi_{\varepsilon
}\cdot\nabla\mu_{\varepsilon}\right)  dxdt\\
&  =-(I_{1}+I_{2}).
\end{align*}
In order to compute the limit of $I_{1}$, we first notice that, since the
sequence $(\mu_{\varepsilon})_{\varepsilon}$ is bounded in $L^{2}%
(0,T;H^{1}(Q))$, we have that $\widetilde{\mu}_{0}$ is independent of $y$,
i.e. $\widetilde{\mu}_{0}\in L^{2}(0,T;L_{per}^{2}(\mathcal{T};H^{1}(Q)))$ and
we have (\ref{3.5'}). It therefore comes from (\ref{3.5'}) that $\mu
_{\varepsilon}\phi_{\varepsilon}\rightarrow\widetilde{\mu}_{0}\phi_{0}$ in
$L^{1}(Q_{T})$-weak 2s. Using the equality $\operatorname{div}\Psi
_{\varepsilon}=\operatorname{div}\Psi_{0}+(\operatorname{div}_{y}\Psi
_{1})^{\varepsilon}+\varepsilon(\operatorname{div}\Psi_{1})^{\varepsilon}$ and
viewing $\operatorname{div}\Psi_{\varepsilon}$ as a test function, we get at once
\begin{align*}
I_{1}  &  \rightarrow%
%TCIMACRO{\diint _{Q_{T}\times\mathcal{T}\times\mathcal{Y}}}%
%BeginExpansion
{\displaystyle\iint_{Q_{T}\times\mathcal{T}\times\mathcal{Y}}}
%EndExpansion
\phi_{0}\widetilde{\mu}_{0}(\operatorname{div}\Psi_{0}+\operatorname{div}%
_{y}\Psi_{1})dxdtdyd\tau\\
&  =\int_{Q_{T}}\phi_{0}\left(  \int_{\mathcal{T}}\widetilde{\mu}_{0}%
d\tau\right)  \operatorname{div}\Psi_{0}dxdt=\int_{Q_{T}}\phi_{0}\mu
_{0}\operatorname{div}\Psi_{0}dxdt
\end{align*}
since $\phi_{0}\widetilde{\mu}_{0}$ in independent of $y$ and $\int
_{\mathcal{Y}}\operatorname{div}_{y}\Psi_{1}dy=0$. As for $I_{2}$, we have
\begin{align*}
I_{2}  &  \rightarrow%
%TCIMACRO{\diint _{Q_{T}\times\mathcal{T}\times\mathcal{Y}}}%
%BeginExpansion
{\displaystyle\iint_{Q_{T}\times\mathcal{T}\times\mathcal{Y}}}
%EndExpansion
\phi_{0}\Psi_{0}\cdot\mathbb{D}\boldsymbol{\mu}dxdtdyd\tau=\int_{Q_{T}}%
\phi_{0}\Psi_{0}\cdot\nabla\left(  \int_{\mathcal{T}}\widetilde{\mu}_{0}%
d\tau\right)  dxdt\\
&  =\int_{Q_{T}}\phi_{0}\Psi_{0}\cdot\nabla\mu_{0}dxdt.
\end{align*}
It follows that
\begin{align*}
\int_{Q_{T}}\mu_{\varepsilon}\nabla\phi_{\varepsilon}\cdot\Psi_{\varepsilon
}dxdt  &  \rightarrow-\int_{Q_{T}}(\phi_{0}\mu_{0}\operatorname{div}\Psi
_{0}+\phi_{0}\Psi_{0}\cdot\nabla\mu_{0})dxdt\\
&  =\int_{Q_{T}}\mu_{0}\nabla\phi_{0}\cdot\Psi_{0}dxdt.
\end{align*}
The limit passage in (\ref{3.14}) is straightforward using for the second
integral, the strong convergences of $(\boldsymbol{u}_{\varepsilon})$ and
$(\phi_{\varepsilon})$.

As for (\ref{3.15}), since it involves the sequence $(\mu_{\varepsilon})$
which is bounded in $L^{2}(0,T;H^{1}(Q))$ with no knowledge about the
boundedness of the sequence $(\partial\mu_{\varepsilon}/\partial t)$, we need,
as in the case of (\ref{3.13}), to explain how the limit equation (\ref{3.12})
is derived. First we have from (\ref{3.5'}) and (\ref{3.23}) that
\[
\int_{Q_{T}}\mu_{\varepsilon}\boldsymbol{\chi}_{\varepsilon}dxdt\rightarrow
\int_{Q_{T}\times\mathcal{T}}\widetilde{\mu}_{0}\chi_{0}dxdtd\tau.
\]
Next, using (\ref{3.7}) and (\ref{3.20}), we obtain
\[
\int_{Q_{T}}\nabla\phi_{\varepsilon}\cdot\nabla\boldsymbol{\chi}_{\varepsilon
}dxdt\rightarrow%
%TCIMACRO{\diint _{Q_{T}\times\mathcal{T}\times\mathcal{Y}}}%
%BeginExpansion
{\displaystyle\iint_{Q_{T}\times\mathcal{T}\times\mathcal{Y}}}
%EndExpansion
\mathbb{D}\Phi\cdot\mathbb{D}\boldsymbol{\chi}dxdtdyd\tau.
\]
Concerning $\int_{Q_{T}}f(\phi_{\varepsilon})\boldsymbol{\chi}_{\varepsilon
}dxdt$, we use the continuity of $f$ associated to (\ref{3.4}) to pass to the
limit. Indeed, from (\ref{3.4}), we have, up to a subsequence of
$(\phi_{\varepsilon})_{\varepsilon\in E^{\prime}}$ not relabeled, that
$\phi_{\varepsilon}\rightarrow\phi_{0}$ a.e. in $Q_{T}$, so that, owing to the
continuity of $f$, $f(\phi_{\varepsilon})\rightarrow f(\phi_{0})$ a.e. in
$Q_{T}$. It therefore follows (using (\ref{3.12'})) that
\[
f(\phi_{\varepsilon})\rightarrow f(\phi_{0})\text{ in }L^{2}(Q_{T})\text{-weak
as }E^{\prime}\ni\varepsilon\rightarrow0.
\]
It therefore emerges from (\ref{3.8'}) that
\[
\int_{Q_{T}}f(\phi_{\varepsilon})\boldsymbol{\chi}_{\varepsilon}%
dxdt\rightarrow%
%TCIMACRO{\diint _{Q_{T}\times\mathcal{T}\times\mathcal{Y}}}%
%BeginExpansion
{\displaystyle\iint_{Q_{T}\times\mathcal{T}\times\mathcal{Y}}}
%EndExpansion
\xi\chi_{0}dxdtdyd\tau\text{ and }\int_{\mathcal{T}\times\mathcal{Y}}\xi
dyd\tau=f(\phi_{0}).
\]
This concludes the proof.
\end{proof}

\subsection{Homogenized problem}

Let $p_{0}=\int_{\mathcal{T}\times\mathcal{Y}}pdyd\tau$. We intend here to
derive the equivalent problem whose the quadruplet $(\boldsymbol{u}_{0}%
,\phi_{0},\mu_{0},p_{0})$ is solution to. Let us recall that $\boldsymbol{u}%
_{0}\in\mathcal{V}$, $\phi_{0}\in\mathcal{W}$, $\mu_{0}\in L^{2}%
(0,T;H^{1}(Q))$ and $p_{0}\in L^{2}(0,T;L_{0}^{2}(Q))$ since from the equality
$\int_{Q}p_{\varepsilon}dx=0$ we deduce that $\int_{Q}p_{0}dx=0$ as
$p_{\varepsilon}\rightarrow p_{0}$ in $L^{2}(Q_{T})$-weak. To that end, we
first uncouple the equations (\ref{3.10})-(\ref{3.12}) in order to obtain the
following equivalent system: (\ref{3.10}) is equivalent to
\begin{equation}
\left\{
\begin{array}
[c]{l}%
-\displaystyle{\int_{Q_{T}}}\boldsymbol{u}_{0}\frac{\partial\Psi_{0}}{\partial
t}dxdt+%
%TCIMACRO{\diint _{Q_{T}\times\mathcal{T}\times\mathcal{Y}}}%
%BeginExpansion
{\displaystyle\iint_{Q_{T}\times\mathcal{T}\times\mathcal{Y}}}
%EndExpansion
A_{0}\mathbb{D}\boldsymbol{u}\cdot\nabla\Psi_{0}dxdtdyd\tau\\
\ \ +\displaystyle{\int_{Q_{T}}}(\boldsymbol{u}_{0}\cdot\nabla)\boldsymbol{u}%
_{0}\Psi_{0}dxdt-\int_{Q_{T}}p_{0}\operatorname{div}\Psi_{0}dxdt\\
\ \ \ \ -\kappa\displaystyle{\int_{Q_{T}}}\mu_{0}\nabla \phi_{0}\cdot\Psi
_{0}dxdt=\int_{Q_{T}}g\Psi_{0}dxdt;
\end{array}
\right.  \label{3.10a}%
\end{equation}%
\begin{equation}%
%TCIMACRO{\diint _{Q_{T}\times\mathcal{T}\times\mathcal{Y}}}%
%BeginExpansion
{\displaystyle\iint_{Q_{T}\times\mathcal{T}\times\mathcal{Y}}}
%EndExpansion
A_{0}\mathbb{D}\boldsymbol{u}\cdot\nabla_y\Psi_{1}dxdtdyd\tau-%
%TCIMACRO{\diint _{Q_{T}\times\mathcal{T}\times\mathcal{Y}}}%
%BeginExpansion
{\displaystyle\iint_{Q_{T}\times\mathcal{T}\times\mathcal{Y}}}
%EndExpansion
p\operatorname{div}_{y}\Psi_{1}dxdtdyd\tau=0. \label{3.10b}%
\end{equation}
Eq (\ref{3.11}) is equivalent to (\ref{3.11a})-(\ref{3.11b}) below
\begin{equation}
-\int_{Q_{T}}\phi_{0}\frac{\partial\varphi_{0}}{\partial t}dxdt+\int_{Q_{T}%
}\phi_{0}\boldsymbol{u}_{0}\cdot\nabla\varphi_{0}dxdt+%
%TCIMACRO{\diint _{Q_{T}\times\mathcal{T}\times\mathcal{Y}}}%
%BeginExpansion
{\displaystyle\iint_{Q_{T}\times\mathcal{T}\times\mathcal{Y}}}
%EndExpansion
\mathbb{D}\boldsymbol{\mu}\cdot\nabla\varphi_{0}dxdtdyd\tau=0 \label{3.11a}%
\end{equation}%
\begin{equation}%
%TCIMACRO{\diint _{Q_{T}\times\mathcal{T}\times\mathcal{Y}}}%
%BeginExpansion
{\displaystyle\iint_{Q_{T}\times\mathcal{T}\times\mathcal{Y}}}
%EndExpansion
\mathbb{D}\boldsymbol{\mu}\cdot\nabla_{y}\varphi_{1}dxdtdyd\tau=0.
\label{3.11b}%
\end{equation}
while (\ref{3.12}) is equivalent to (\ref{3.12a})-(\ref{3.12b}):
\begin{equation}%
\begin{array}
[c]{l}%
{\displaystyle\int_{Q_{T}}}\widetilde{\mu}_{0}\chi_{0}dxdtd\tau=\lambda%
%TCIMACRO{\diint _{Q_{T}\times\mathcal{T}\times\mathcal{Y}}}%
%BeginExpansion
{\displaystyle\iint_{Q_{T}\times\mathcal{T}\times\mathcal{Y}}}
%EndExpansion
\mathbb{D}\Phi\cdot\nabla\chi_{0}dxdtdyd\tau+\alpha%
%TCIMACRO{\diint _{Q_{T}\times\mathcal{T}\times\mathcal{Y}}}%
%BeginExpansion
{\displaystyle\iint_{Q_{T}\times\mathcal{T}\times\mathcal{Y}}}
%EndExpansion
\xi\chi_{0}dxdtdyd\tau\\
\\
\text{for all }\chi_{0}\in\mathcal{C}_{0}^{\infty}(Q_{T})\otimes
\mathcal{C}_{per}^{\infty}(\mathcal{T}),
\end{array}
\label{3.12a}%
\end{equation}%
\begin{equation}%
%TCIMACRO{\diint _{Q_{T}\times\mathcal{T}\times\mathcal{Y}}}%
%BeginExpansion
{\displaystyle\iint_{Q_{T}\times\mathcal{T}\times\mathcal{Y}}}
%EndExpansion
\mathbb{D}\Phi\cdot\nabla_{y}\chi_{1}dxdtdyd\tau=0\text{ }%
\forall\chi_{1}\in\mathcal{C}_{0}^{\infty}(Q_{T})\otimes\mathcal{C}%
_{per}^{\infty}(\mathcal{T})\otimes\mathcal{C}_{per}^{\infty}(\mathcal{Y}).
\label{3.12b}%
\end{equation}

Let us start with (\ref{3.12}). In (\ref{3.12b}), we take $\chi_{1}$ under the
form $\chi_{1}(t,x,\tau,y)=\chi_{1}^{0}(t,x)\psi(y)\theta(\tau)$ with
$\chi_{1}^{0}\in\mathcal{C}_{0}^{\infty}(Q_{T})$, $\psi\in\mathcal{C}%
_{per}^{\infty}(\mathcal{Y})$ and $\theta\in\mathcal{C}_{per}^{\infty
}(\mathcal{T})$ to get
\[
\int_{\mathcal{Y}}(\nabla\phi_{0}+\nabla_{y}\phi_{1})\cdot\nabla_{y}\psi
dy=0\ \ \forall\psi\in\mathcal{C}_{per}^{\infty}(\mathcal{Y}),
\]
which equation is easily seen to possess a unique solution $\phi_{1}\equiv0$.
With this in mind, going back to (\ref{3.12a}) and choosing there $\chi
_{0}(t,x,\tau)=\chi^{0}(t,x)\theta(\tau)$ with $\chi^{0}\in\mathcal{C}%
_{0}^{\infty}(Q_{T})$ and $\theta\in\mathcal{C}_{per}^{\infty}(\mathcal{T})$,
we obtain, after simplification by $\theta$ and next integrating over
$\mathcal{T}$,
\[
\int_{Q_{T}}\left(  \int_{\mathcal{T}}\widetilde{\mu}_{0}d\tau\right)
\chi^{0}dxdt=\lambda\int_{Q_{T}}\nabla\phi_{0}\cdot\nabla\chi^{0}%
dxdt+\alpha\int_{Q_{T}}\left(
%TCIMACRO{\dint _{\mathcal{T}\times\mathcal{Y}}}%
%BeginExpansion
{\displaystyle\int_{\mathcal{T}\times\mathcal{Y}}}
%EndExpansion
\xi dyd\tau\right)  \chi^{0}dxdt,
\]
i.e.
\[
\left\{
\begin{array}
[c]{l}%
{\displaystyle\int_{Q_{T}}\mu_{0}\chi^{0}dxdt=\lambda\int_{Q_{T}}\nabla
\phi_{0}\cdot\nabla\chi^{0}dxdt+\alpha\int_{Q_{T}}f(\phi_{0})\chi^{0}dxdt}\\
\text{for all }\chi_{0}\in\mathcal{C}_{0}^{\infty}(Q_{T}).
\end{array}
\right.
\]
This yields the first homogenized equation, viz.
\begin{equation}
\mu_{0}=-\lambda\Delta\phi_{0}+\alpha f(\phi_{0})\text{ in }Q_{T}.
\label{3.12c}%
\end{equation}

The next step is to consider the equation (\ref{3.11}). Here we first deal
with (\ref{3.11b}) to see that, as for (\ref{3.12b}), it can be shown that
$\mu_{1}(t,x,\tau,y)=0$ for a.e. $(t,x,\tau,y)$, so that, moving to
(\ref{3.11a}), we obtain the variational form of (where we take into account
the fact that $\operatorname{div}\boldsymbol{u}_{0}=0$)
\begin{equation}
\frac{\partial\phi_{0}}{\partial t}+\boldsymbol{u}_{0}\cdot\nabla\phi
_{0}-\Delta\mu_{0}=0\text{ in }Q_{T}. \label{3.11c}%
\end{equation}

Now, the most involved problem is (\ref{3.10}). Therein we first consider the
corrector equation (\ref{3.10b}) in which, with the choice of the test
function under the form $\Psi_{1}(t,x,\tau,y)=\Psi_{1}^{0}(t,x)\psi
(y)\theta(\tau)$ with $\Psi_{1}^{0}\in\mathcal{C}_{0}^{\infty}(Q_{T})$,
$\psi\in\mathcal{C}_{per}^{\infty}(\mathcal{Y})^{2}$ and $\theta\in
\mathcal{C}_{per}^{\infty}(\mathcal{T})$, yields the variational form of the
following Stokes type equation: for a.e. $(t,x,\tau,\cdot)$, $\boldsymbol{u}%
_{1}(t,x,\tau,\cdot)$ solves the equation
\begin{equation}
\left\{
\begin{array}
[c]{l}%
-\operatorname{div}_{y}(A_{0}(t,x,\tau,\cdot)(\nabla\boldsymbol{u}%
_{0}(t,x,\tau,\cdot)+\nabla\boldsymbol{u}_{1}(t,x,\tau,\cdot))+\nabla
_{y}p(t,x,\tau,\cdot)=0\text{ in }\mathcal{Y}\\
\\
\operatorname{div}\boldsymbol{u}_{1}(t,x,\tau,\cdot)=0\text{ in }\mathcal{Y}\\
\boldsymbol{u}_{1}(t,x,\tau,\cdot)\text{ is }\mathcal{Y}\text{-periodic and
}{\displaystyle\int_{\mathcal{Y}}\boldsymbol{u}_{1}(t,x,\tau,y)dy=0\text{.}}%
\end{array}
\right.  \label{3.10d}%
\end{equation}
So, for $r\in\mathbb{R}^{2\times2}$, let $(\boldsymbol{\eta}%
(r)=\boldsymbol{\eta}_{t,x,\tau}(r),\pi(r)=\pi_{t,x,\tau}(r))$ be the solution
of the Stokes equation
\begin{equation}
\left\{
\begin{array}
[c]{l}%
-\operatorname{div}_{y}(A_{0}(t,x,\tau,\cdot)(r+\nabla\boldsymbol{\eta
}(r))+\nabla_{y}\pi(r)=0\text{ in }\mathcal{Y}\\
\operatorname{div}_{y}\boldsymbol{\eta}(r)=0\text{ in }\mathcal{Y}\\
\boldsymbol{\eta}(r)\in H_{\#}^{1}(\mathcal{Y})^{2}\text{, }\pi(r)\in
L_{per}^{2}(\mathcal{Y})/\mathbb{R}\text{.}%
\end{array}
\right.  \label{3.10e}%
\end{equation}
Then as classically known, equation (\ref{3.10e}) possesses a unique solution.
Choosing in (\ref{3.10}) $r=\nabla\boldsymbol{u}_{0}(t,x)$ and the uniqueness
of the solution to (\ref{3.10e}) leads to $\boldsymbol{u}_{1}=\boldsymbol{\eta
}(\nabla\boldsymbol{u}_{0})$ and $p=\pi(\nabla\boldsymbol{u}_{0})$ where
$\boldsymbol{\eta}(\nabla\boldsymbol{u}_{0})$ stands for the function
$(t,x,\tau)\mapsto\boldsymbol{\eta}_{t,x,\tau}(\nabla\boldsymbol{u}_{0}%
(t,x))$, which belongs to $L^{2}(Q_{T}\times\mathcal{T};H_{\#}^{1}%
(\mathcal{Y})^{2})$. Clearly, if $\boldsymbol{\eta}_{j}^{\ell}$ is the
solution of (\ref{3.10e}) corresponding to $r=r_{j}^{\ell}=(\delta_{ij}%
\delta_{k\ell})_{1\leq i,k\leq2}$ (that is all the entries of $r$ are zero
except the entry occupying the $j$th row and the $\ell$th column which is
equal to $1$), then
\begin{equation}
\boldsymbol{u}_{1}=\sum_{j,\ell=1}^{2}\frac{\partial u_{0}^{\ell}}{\partial
x_{j}}\boldsymbol{\eta}_{j}^{\ell}\text{ where }\boldsymbol{u}_{0}%
=(u_{0}^{\ell})_{1\leq\ell\leq2}. \label{3.10f}%
\end{equation}
Let us recall that $\boldsymbol{\eta}_{j}^{\ell}$ depends implicitly on
$t,x,\tau$ since $A_{0}$ does. Going back to the variational form of
(\ref{3.10a}) and inserting there the value of $\boldsymbol{u}_{1}$ obtained
in (\ref{3.10f}), we are led to the following equation
\begin{equation}
\frac{\partial\boldsymbol{u}_{0}}{\partial t}-\operatorname{div}(\widehat
{A}_{0}\nabla\boldsymbol{u}_{0})+(\boldsymbol{u}_{0}\cdot\nabla)\boldsymbol{u}%
_{0}+\nabla p_{0}-\kappa\mu_{0}\nabla\phi_{0}=g\text{ in }Q_{T} \label{3.10c}%
\end{equation}
where 
\[
\widehat{A}_{0}(t,x)=(\widehat{a}_{ij}^{k\ell}(t,x))_{1\leq
i,j,k,\ell\leq2},\ \widehat{a}_{ij}^{k\ell}(t,x)=a_{per}(\boldsymbol{\eta
}_{j}^{\ell}+P_{j}^{\ell},\boldsymbol{\eta}_{i}^{k}+P_{i}^{k})
\]
with
$P_{j}^{\ell}=y_{j}e^{\ell}$ ($e^{\ell}$ the $\ell$th vector of the canonical
basis of $\mathbb{R}^{2}$) and
\[
a_{per}(\boldsymbol{u},\boldsymbol{v})=\sum_{i,j,k=1}^{2}\int_{\mathcal{T}%
\times\mathcal{Y}}a_{ij}\frac{\partial u^{k}}{\partial y_{j}}\frac{\partial
v^{k}}{\partial y_{i}}dyd\tau\text{ where }A_{0}=(a_{ij})_{1\leq i,j\leq2}.
\]
Finally, let us put together the equations (\ref{3.10c}), (\ref{3.11c}),
(\ref{3.12c}) associated to the boundary and initial conditions:
\begin{equation}
\left\{
\begin{array}
[c]{l}%
{\displaystyle\frac{\partial\boldsymbol{u}_{0}}{\partial t}}%
-\operatorname{div}(\widehat{A}_{0}\nabla\boldsymbol{u}_{0})+(\boldsymbol{u}%
_{0}\cdot\nabla)\boldsymbol{u}_{0}+\nabla p_{0}-\kappa\mu_{0}\nabla\phi
_{0}=g\text{ in }Q_{T}\\
\operatorname{div}\boldsymbol{u}_{0}=0\text{ in }Q_{T}\\
{\displaystyle\frac{\partial\phi_{0}}{\partial t}}+\boldsymbol{u}_{0}%
\cdot\nabla\phi_{0}-\Delta\mu_{0}=0\text{ in }Q_{T}\\
\mu_{0}=-\lambda\Delta\phi_{0}+\alpha f(\phi_{0})\text{ in }Q_{T}\\
(\boldsymbol{u}_{0},\phi_{0})(0)=(\boldsymbol{u}_{0}^{\ast},\phi_{0}^{\ast
})\text{ in }Q\\
{\displaystyle\frac{\partial\phi_{0}}{\partial\nu}=\frac{\partial\mu_{0}%
}{\partial\nu}}\text{ on }(0,T)\times\partial Q\\
\boldsymbol{u}_{0}=0\text{ on }(0,T)\times\partial Q.
\end{array}
\right.  \label{3.25}%
\end{equation}
Problem (\ref{3.25}) is the homogenized problem. Contrasting with
(\ref{3.10})-(\ref{3.12}), it involves only the macroscopic limit
$(\boldsymbol{u}_{0},\phi_{0},\mu_{0},p_{0})$ of the sequence $(\boldsymbol{u}%
_{\varepsilon},\phi_{\varepsilon},\mu_{\varepsilon},p_{\varepsilon})$ of
solutions to (\ref{1})-(\ref{4}). Thus it describes the macroscopic behavior
of the above-mentioned sequence as the size of the heterogeneities goes to
zero. It can be easily shown that the matrix $\widehat{A}_{0}$ of homogenized
coefficients is uniformly elliptic, so that under the conditions
(\textbf{A2})-(\textbf{A3}), the problem (\ref{3.25}) possesses a unique
solution $(\boldsymbol{u}_{0},\phi_{0},\mu_{0},p_{0})$ with $\boldsymbol{u}%
_{0}\in L^{2}(0,T;\mathbb{V})$, $\phi_{0},\mu_{0}\in L^{2}(0,T;H^{1}(Q))$ and
$p_{0}\in L^{2}(0,T;L_{0}^{2}(Q))$. Since the solution to (\ref{3.25}) is
unique, we infer that the whole sequence $(\boldsymbol{u}_{\varepsilon}%
,\phi_{\varepsilon},\mu_{\varepsilon},p_{\varepsilon})$ converges in the
suitable spaces towards $(\boldsymbol{u}_{0},\phi_{0},\mu_{0},p_{0})$ as
stated in the following result, which is the first main theorem of this work.

\begin{theorem}
\label{t3.4}Assume that \emph{(\textbf{A1})-(\textbf{A3})} and
\emph{(\ref{1.0})} hold. For any $\varepsilon>0$ let $(\boldsymbol{u}%
_{\varepsilon},\phi_{\varepsilon},\mu_{\varepsilon},p_{\varepsilon})$ be the
unique solution of problem \emph{(\ref{1})-(\ref{4})}. Then the sequence
$(\boldsymbol{u}_{\varepsilon},\phi_{\varepsilon},\mu_{\varepsilon
},p_{\varepsilon})_{\varepsilon>0}$ converges strongly in $L^{2}(Q_{T}%
)^{2}\times L^{2}(Q_{T})$ $($with respect to the first two components
$(\boldsymbol{u}_{\varepsilon},\phi_{\varepsilon}))$ and weakly in
$L^{2}(Q_{T})\times L^{2}(0,T;H^{1}(Q))$ $($with respect to $(\mu_{\varepsilon
},p_{\varepsilon}))$ to the solution of  problem \emph{(\ref{3.25})}.
\end{theorem}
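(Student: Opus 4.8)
The plan is to combine the uniform compactness already gathered with the two-scale limit equations, and then to promote subsequential convergence to convergence of the whole family by a uniqueness argument for the homogenized system. First I would fix an arbitrary fundamental sequence $E$. By the uniform estimates of Lemma \ref{l1} and Lemma \ref{le2}, the family $(\boldsymbol{u}_{\varepsilon},\phi_{\varepsilon},\mu_{\varepsilon},p_{\varepsilon})$ is bounded in the relevant spaces; invoking Theorem \ref{t1}, Theorem \ref{t2} and the compact embeddings $\mathcal{V}\hookrightarrow L^{2}(0,T;\mathbb{H})$ and $\mathcal{W}\hookrightarrow L^{2}(0,T;L^{2}(Q))$, I extract a subsequence $E'\subset E$ along which all of (\ref{3.1})--(\ref{3.9}) hold, producing a limit $(\boldsymbol{u}_{0},\phi_{0},\mu_{0},p_{0})$ together with the correctors $\boldsymbol{u}_{1},\phi_{1},\mu_{1}$ and the two-scale limit $\xi$ of $f(\phi_{\varepsilon})$.

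Second, Proposition \ref{p3.1} guarantees that this limit solves the coupled two-scale variational problem (\ref{3.10})--(\ref{3.12}). I then carry out the uncoupling exactly as above: separated test functions in the corrector equations force $\phi_{1}\equiv0$ and $\mu_{1}\equiv0$, so that (\ref{3.12}) collapses to the constitutive law (\ref{3.12c}) and (\ref{3.11}) to the convective Cahn--Hilliard equation (\ref{3.11c}); for the momentum balance, unique solvability of the cell Stokes problem (\ref{3.10e}) yields $\boldsymbol{u}_{1}=\boldsymbol{\eta}(\nabla\boldsymbol{u}_{0})$, $p=\pi(\nabla\boldsymbol{u}_{0})$ and the effective matrix $\widehat{A}_{0}$, whence (\ref{3.10c}). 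Adjoining the boundary conditions and recovering the initial data from $\boldsymbol{u}_{0}\in\mathcal{V}\hookrightarrow\mathcal{C}([0,T];\mathbb{H})$ and $\phi_{0}\in\mathcal{W}\hookrightarrow\mathcal{C}([0,T];L^{2}(Q))$, by passing to the limit in $\boldsymbol{u}_{\varepsilon}(0)=\boldsymbol{u}_{0}^{\ast}$ and $\phi_{\varepsilon}(0)=\phi_{0}^{\ast}$, shows that $(\boldsymbol{u}_{0},\phi_{0},\mu_{0},p_{0})$ solves the homogenized system (\ref{3.25}).

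Third, I would close with uniqueness. Since $\widehat{A}_{0}$ inherits uniform ellipticity from $A_{0}$, problem (\ref{3.25}) has, under (\textbf{A2})--(\textbf{A3}), a unique solution, so the limit is independent of the extracted subsequence. As $E$ was arbitrary, every subsequence of $(\boldsymbol{u}_{\varepsilon},\phi_{\varepsilon},\mu_{\varepsilon},p_{\varepsilon})$ has a further subsequence converging to the same $(\boldsymbol{u}_{0},\phi_{0},\mu_{0},p_{0})$, and hence the whole family converges. The asserted convergence modes are precisely (\ref{3.2}), (\ref{3.4}), (\ref{3.5}) and (\ref{3.9}): strong $L^{2}(Q_{T})$ convergence of $\boldsymbol{u}_{\varepsilon}$ and $\phi_{\varepsilon}$ (an Aubin--Lions consequence of the two compact embeddings) and weak convergence of $\mu_{\varepsilon}$ and $p_{\varepsilon}$ (with $p_{0}=\int_{\mathcal{T}\times\mathcal{Y}}p\,dyd\tau$).

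I expect the genuine difficulty to lie not in this assembly but in the two-scale passage to the limit in the nonlinear couplings, which is exactly what Proposition \ref{p3.1} supplies and on which I would lean. The capillary term $\mu_{\varepsilon}\nabla\phi_{\varepsilon}$ must be integrated by parts via $\mu_{\varepsilon}\nabla\phi_{\varepsilon}=\nabla(\mu_{\varepsilon}\phi_{\varepsilon})-\phi_{\varepsilon}\nabla\mu_{\varepsilon}$, pairing the merely weak two-scale convergence of $\mu_{\varepsilon}$ with the strong convergence (\ref{3.4}) of $\phi_{\varepsilon}$; likewise, the identification $\int_{\mathcal{T}\times\mathcal{Y}}\xi\,dyd\tau=f(\phi_{0})$ rests on the a.e. convergence extracted from (\ref{3.4}) together with the bound (\ref{3.12'}). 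Verifying that these identifications persist after uncoupling, and that no microscopic oscillation leaks into the effective nonlinearities, is the step demanding the most care.
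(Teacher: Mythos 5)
Your proposal is correct and follows essentially the same route as the paper: the paper's proof of Theorem \ref{t3.4} is simply the assembly of the a priori estimates, the subsequential two-scale limits (\ref{3.1})--(\ref{3.9}), Proposition \ref{p3.1}, the uncoupling of (\ref{3.10})--(\ref{3.12}) into (\ref{3.25}), and the uniqueness of the solution to (\ref{3.25}) to pass from subsequences to the whole family. Your additional remarks on the capillary term and the identification of $\xi$ merely make explicit the steps the paper delegates to Proposition \ref{p3.1}.
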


\begin{proof}
The proof is a consequence of the previous steps.
\end{proof}

\section{Homogenization results: the general deterministic framework}

Our purpose here is to extend the results of the preceding section to more
general setting beyond the periodic framework. The basic notation and
hypotheses (except the periodicity assumption) stated before are still valid.

In this section we recall some basic facts about the algebras with mean value
\cite{Zhikov4} and the concept of sigma-convergence \cite{Hom1} (see also
\cite{CMP,NA}). Using the semigroup theory we reify the presentation of some
essential results related to the previous concepts. We refer the reader to
\cite{Deterhom} for the details regarding most of the results of this section.

\subsection{Algebras with mean value and sigma-convergence}

Let $A$ be an algebra with mean value (algebra wmv, in short) on
$\mathbb{R}^{d}$, that is, a closed subalgebra of the algebra of bounded
uniformly continuous real-valued functions on $\mathbb{R}^{d}$, $\mathrm{BUC}%
(\mathbb{R}^{d})$, which contains the constants, is translation invariant and
is such that any of its elements possesses a mean value in the following
sense: for every $u\in A$, the sequence $(u^{\varepsilon})_{\varepsilon>0}$
(where $u^{\varepsilon}(x)=u(x/\varepsilon)$ for $x\in\mathbb{R}^{d}$)
weakly$\ast$-converges in $L^{\infty}(\mathbb{R}^{d})$ to some real number
$M(u)$ (called the mean value of $u$) as $\varepsilon\rightarrow0$.

The mean value expresses as
\begin{equation}
M(u)=\lim_{R\rightarrow+\infty}%
\mathchoice {{\setbox0=\hbox{$\displaystyle{\textstyle
-}{\int}$ } \vcenter{\hbox{$\textstyle -$
}}\kern-.6\wd0}}{{\setbox0=\hbox{$\textstyle{\scriptstyle -}{\int}$ } \vcenter{\hbox{$\scriptstyle -$
}}\kern-.6\wd0}}{{\setbox0=\hbox{$\scriptstyle{\scriptscriptstyle -}{\int}$
} \vcenter{\hbox{$\scriptscriptstyle -$
}}\kern-.6\wd0}}{{\setbox0=\hbox{$\scriptscriptstyle{\scriptscriptstyle
-}{\int}$ } \vcenter{\hbox{$\scriptscriptstyle -$ }}\kern-.6\wd0}}\!\int
_{B_{R}}u(y)dy\;\;\;\;\;\;\;\;\;\;\;\;\;\; \label{4.0}%
\end{equation}
where $B_{R}$ stands for the bounded open ball in $\mathbb{R}^{d}$ with radius
$R$ and $\left\vert B_{R}\right\vert $ denotes its Lebesgue measure, with
$\mathchoice {{\setbox0=\hbox{$\displaystyle{\textstyle
-}{\int}$ } \vcenter{\hbox{$\textstyle -$
}}\kern-.6\wd0}}{{\setbox0=\hbox{$\textstyle{\scriptstyle -}{\int}$ }
\vcenter{\hbox{$\scriptstyle -$
}}\kern-.6\wd0}}{{\setbox0=\hbox{$\scriptstyle{\scriptscriptstyle -}{\int}$
} \vcenter{\hbox{$\scriptscriptstyle -$
}}\kern-.6\wd0}}{{\setbox0=\hbox{$\scriptscriptstyle{\scriptscriptstyle
-}{\int}$ } \vcenter{\hbox{$\scriptscriptstyle -$ }}\kern-.6\wd0}}\!\int
_{B_{R}}=\frac{1}{\left\vert B_{R}\right\vert }\int_{B_{R}}$.

To an algebra wmv $A$ we associate its regular subalgebras $A^{m}=\{\psi
\in\mathcal{C}^{m}(\mathbb{R}^{d}):D_{y}^{\alpha}\psi\in A$ $\forall
\alpha=(\alpha_{1},...,\alpha_{d})\in\mathbb{N}^{d}$ with $\left\vert
\alpha\right\vert \leq m\}$ ($m\geq0$ an integer with $A^{0}=A$, and
$D_{y}^{\alpha}\psi=\frac{\partial^{\left\vert \alpha\right\vert }\psi
}{\partial y_{1}^{\alpha_{1}}\cdot\cdot\cdot\partial y_{d}^{\alpha_{d}}}$).
Under the norm $\left\vert \left\vert\left\vert u\right\vert \right\vert\right\vert _{m}%
=\sup_{\left\vert \alpha\right\vert \leq m}\left\Vert D_{y}^{\alpha}%
\psi\right\Vert _{\infty}$, $A^{m}$ is a Banach space. We also define the
space $A^{\infty}=\{\psi\in\mathcal{C}^{\infty}(\mathbb{R}^{d}):D_{y}^{\alpha
}\psi\in A$ $\forall\alpha=(\alpha_{1},...,\alpha_{d})\in\mathbb{N}^{d}\}$, a
Fr\'{e}chet space when endowed with the locally convex topology defined by the
family of norms $\left\vert \left\vert\left\vert \cdot\right\vert \right\vert\right\vert _{m}$. The
space $A^{\infty}$ is dense in any $A^{m}$ (integer $m\geq0$).

The notion of a product algebra wmv will be very useful in this study. Before
we can define it, let first and foremost deal with the concept of a
vector-valued algebra wmv.

Let $F$ be a Banach space. We denote by \textrm{BUC}$(\mathbb{R}^{d};F)$ the
Banach space of bounded uniformly continuous functions $u:\mathbb{R}%
^{d}\rightarrow F$, endowed with the norm
\[
\left\Vert u\right\Vert _{\infty}=\sup_{y\in\mathbb{R}^{d}}\left\Vert
u(y)\right\Vert _{F}%
\]
where $\left\Vert \cdot\right\Vert _{F}$ stands for the norm in $F$. Let $A$
be an algebra with mean value on $\mathbb{R}^{d}$. We denote by $A\otimes F$
the usual space of functions of the form
\[
\sum_{\text{finite}}u_{i}\otimes e_{i}\text{ with }u_{i}\in A\text{ and }%
e_{i}\in F
\]
where $(u_{i}\otimes e_{i})(y)=u_{i}(y)e_{i}$ for $y\in\mathbb{R}^{d}$. With
this in mind, we define the vector-valued algebra wmv $A(\mathbb{R}^{d};F)$ as
the closure of $A\otimes F$ in \textrm{BUC}$(\mathbb{R}^{d};F)$.

We may now define the product algebra wmv.

\begin{definition}
\label{d3.0}\emph{Let }$A_{y}$\emph{\ and }$A_{\tau}$\emph{\ be two algebras
wmv on }$\mathbb{R}_{y}^{d}$\emph{\ and }$\mathbb{R}_{\tau}$%
\emph{\ respectively. The vector-valued algebra wmv }$A:=A_{y}(\mathbb{R}%
^{d};A_{\tau})=A_{\tau}(\mathbb{R}_{\tau};A_{y})$\emph{\ is an algebra wmv on
}$\mathbb{R}^{d+1}$\emph{\ denoted by }$A_{y}\odot A_{\tau}$\emph{. The
algebra }$A_{y}\odot A_{\tau}$\emph{\ is called the }product algebra wmv of
$A_{y}$ and $A_{\tau}$.
\end{definition}

Now, let $f\in A(\mathbb{R}^{d};F)$ (integer $d\geq1$). Then, defining
$\left\Vert f\right\Vert _{F}$ by $\left\Vert f\right\Vert _{F}(y)=\left\Vert
f(y)\right\Vert _{F}$ ($y\in\mathbb{R}^{d}$), we have that $\left\Vert
f\right\Vert _{F}\in A$. Similarly we can define (for $0<p<\infty$) the
function $\left\Vert f\right\Vert _{F}^{p}$ and $\left\Vert f\right\Vert
_{F}^{p}\in A$. This allows us to define the Besicovitch seminorm on
$A(\mathbb{R}^{d};F)$ as follows: for $1\leq p<\infty$, we define the
Marcinkiewicz-type space $\mathfrak{M}^{p}(\mathbb{R}^{d};F)$ to be the vector
space of functions $u\in L_{loc}^{p}(\mathbb{R}^{d};F)$ such that
\[
\left\Vert u\right\Vert _{p,F}=\left(  \underset{R\rightarrow\infty}{\lim\sup
}\mathchoice {{\setbox0=\hbox{$\displaystyle{\textstyle
-}{\int}$ } \vcenter{\hbox{$\textstyle -$
}}\kern-.6\wd0}}{{\setbox0=\hbox{$\textstyle{\scriptstyle -}{\int}$ } \vcenter{\hbox{$\scriptstyle -$
}}\kern-.6\wd0}}{{\setbox0=\hbox{$\scriptstyle{\scriptscriptstyle -}{\int}$
} \vcenter{\hbox{$\scriptscriptstyle -$
}}\kern-.6\wd0}}{{\setbox0=\hbox{$\scriptscriptstyle{\scriptscriptstyle
-}{\int}$ } \vcenter{\hbox{$\scriptscriptstyle -$ }}\kern-.6\wd0}}\!\int
_{B_{R}}\left\Vert u(y)\right\Vert _{F}^{p}dy\right)  ^{\frac{1}{p}}<\infty
\]
where $B_{R}$ is the open ball in $\mathbb{R}^{d}$ centered at the origin and
of radius $R$. Under the seminorm $\left\Vert \cdot\right\Vert _{p,F}$,
$\mathfrak{M}^{p}(\mathbb{R}^{d};F)$ is a complete seminormed space with the
property that $A(\mathbb{R}^{d};F)\subset\mathfrak{M}^{p}(\mathbb{R}^{d};F)$
since $\left\Vert u\right\Vert _{p,F}<\infty$ for any $u\in A(\mathbb{R}%
^{d};F)$. We therefore define the generalized Besicovitch space $B_{A}%
^{p}(\mathbb{R}^{d};F)$ as the closure of $A(\mathbb{R}^{d};F)$ in
$\mathfrak{M}^{p}(\mathbb{R}^{d};F)$. The following hold true:

\begin{itemize}
\item[(\textbf{i)}] The space $\mathcal{B}_{A}^{p}(\mathbb{R}^{d};F)=B_{A}%
^{p}(\mathbb{R}^{d};F)/\mathcal{N}$ (where $\mathcal{N}=\{u\in B_{A}%
^{p}(\mathbb{R}^{d};F):\left\Vert u\right\Vert _{p,F}=0\}$) is a Banach space
under the norm $\left\Vert u+\mathcal{N}\right\Vert _{p,F}=\left\Vert
u\right\Vert _{p,F}$ for $u\in B_{A}^{p}(\mathbb{R}^{d};F)$.

\item[(\textbf{ii)}] The mean value $M:A(\mathbb{R}^{d};F)\rightarrow F$
extends by continuity to a continuous linear mapping (still denoted by $M$) on
$B_{A}^{p}(\mathbb{R}^{d};F)$ satisfying
\[
L(M(u))=M(L(u))\text{ for all }L\in F^{\prime}\text{ and }u\in B_{A}%
^{p}(\mathbb{R}^{d};F).
\]
Moreover, for $u\in B_{A}^{p}(\mathbb{R}^{d};F)$ we have
\[
\left\Vert u\right\Vert _{p,F}=\left[  M(\left\Vert u\right\Vert _{F}%
^{p})\right]  ^{1/p}\equiv\left(  \lim_{R\rightarrow\infty}%
\mathchoice {{\setbox0=\hbox{$\displaystyle{\textstyle
-}{\int}$ } \vcenter{\hbox{$\textstyle -$
}}\kern-.6\wd0}}{{\setbox0=\hbox{$\textstyle{\scriptstyle -}{\int}$ } \vcenter{\hbox{$\scriptstyle -$
}}\kern-.6\wd0}}{{\setbox0=\hbox{$\scriptstyle{\scriptscriptstyle -}{\int}$
} \vcenter{\hbox{$\scriptscriptstyle -$
}}\kern-.6\wd0}}{{\setbox0=\hbox{$\scriptscriptstyle{\scriptscriptstyle
-}{\int}$ } \vcenter{\hbox{$\scriptscriptstyle -$ }}\kern-.6\wd0}}\!\int
_{B_{R}}\left\Vert u(y)\right\Vert _{F}^{p}dy\right)  ^{\frac{1}{p}},
\]
and for $u\in\mathcal{N}$ one has $M(u)=0$.
\end{itemize}

It is to be noted that $\mathcal{B}_{A}^{2}(\mathbb{R}^{d};H)$ (when $F=H$ is
a Hilbert space) is a Hilbert space with inner product
\begin{equation}
\left(  u,v\right)  _{2}=M\left[  \left(  u,v\right)  _{H}\right]  \text{ for
}u,v\in\mathcal{B}_{A}^{2}(\mathbb{R}^{d};H) \label{1.5}%
\end{equation}
where $(\cdot,\cdot)_{H}$ stands for the inner product in $H$ and $\left(
u,v\right)  _{H}$ the function $y\mapsto\left(  u(y),v(y)\right)  _{H}$ from
$\mathbb{R}^{d}$ to $\mathbb{R}$, which belongs to $\mathcal{B}_{A}%
^{1}(\mathbb{R}^{d})$.

Of interest in the sequel is the special case $F=\mathbb{R}$ for which
$B_{A}^{p}(\mathbb{R}^{d}):=B_{A}^{p}(\mathbb{R}^{d};\mathbb{R})$ and
$\mathcal{B}_{A}^{p}(\mathbb{R}^{d}):=\mathcal{B}_{A}^{p}(\mathbb{R}%
^{d};\mathbb{R})$. The Besicovitch seminorm in $B_{A}^{p}(\mathbb{R}^{d})$ is
merely denoted by $\left\Vert \cdot\right\Vert _{p}$, and we have $B_{A}%
^{q}(\mathbb{R}^{d})\subset B_{A}^{p}(\mathbb{R}^{d})$ for $1\leq p\leq
q<\infty$. From this last property one may naturally define the space
$B_{A}^{\infty}(\mathbb{R}^{d})$ as follows:
\[
B_{A}^{\infty}(\mathbb{R}^{d})=\left\{f\in\bigcap_{1\leq p<\infty}B_{A}^{p}%
(\mathbb{R}^{d}):\sup_{1\leq p<\infty}\left\Vert f\right\Vert _{p}%
<\infty\right\}\text{.}\;\;\;\;\;\;\;\;\;
\]
We endow $B_{A}^{\infty}(\mathbb{R}^{d})$ with the seminorm $\left[  f\right]
_{\infty}=\sup_{1\leq p<\infty}\left\Vert f\right\Vert _{p}$, which makes it a
complete seminormed space.

In this regard, we consider the space $B_{A}^{1,p}(\mathbb{R}^{d})=\{u\in
B_{A}^{p}(\mathbb{R}^{d}):\nabla_{y}u\in(B_{A}^{p}(\mathbb{R}^{d}))^{d}\}$
endowed with the seminorm
\[
\left\Vert u\right\Vert _{1,p}=\left(  \left\Vert u\right\Vert _{p}%
^{p}+\left\Vert \nabla_{y}u\right\Vert _{p}^{p}\right)  ^{\frac{1}{p}},
\]
which is a complete seminormed space. The Banach counterpart of the previous
spaces are defined as follows. We set $\mathcal{B}_{A}^{p}(\mathbb{R}%
^{d})=B_{A}^{p}(\mathbb{R}^{d})/\mathcal{N}$ where $\mathcal{N}=\{u\in
B_{A}^{p}(\mathbb{R}^{d}):\left\Vert u\right\Vert _{p}=0\}$. We define
$\mathcal{B}_{A}^{1,p}(\mathbb{R}^{d})$ mutatis mutandis: replace $B_{A}%
^{p}(\mathbb{R}^{d})$ by $\mathcal{B}_{A}^{p}(\mathbb{R}^{d})$ and
$\partial/\partial y_{i}$ by $\overline{\partial}/\partial y_{i}$, where
$\overline{\partial}/\partial y_{i}$ is defined by
\begin{equation}
\frac{\overline{\partial}}{\partial y_{i}}(u+\mathcal{N}):=\frac{\partial
u}{\partial y_{i}}+\mathcal{N}\text{ for }u\in B_{A}^{1,p}(\mathbb{R}^{d}).
\label{0.3}%
\end{equation}
It is important to note that $\overline{\partial}/\partial y_{i}$ is also
defined as the infinitesimal generator in the $i$th direction coordinate of
the strongly continuous group $\mathcal{T}(y):\mathcal{B}_{A}^{p}%
(\mathbb{R}^{d})\rightarrow\mathcal{B}_{A}^{p}(\mathbb{R}^{d})$;$\ \mathcal{T}%
(y)(u+\mathcal{N})=u(\cdot+y)+\mathcal{N}$. Let us denote by $\varrho
:B_{A}^{p}(\mathbb{R}^{d})\rightarrow\mathcal{B}_{A}^{p}(\mathbb{R}^{d}%
)=B_{A}^{p}(\mathbb{R}^{d})/\mathcal{N}$, $\varrho(u)=u+\mathcal{N}$, the
canonical surjection. We remark that if $u\in B_{A}^{1,p}(\mathbb{R}^{d})$
then $\varrho(u)\in\mathcal{B}_{A}^{1,p}(\mathbb{R}^{d})$ with further
\[
\frac{\overline{\partial}\varrho(u)}{\partial y_{i}}=\varrho\left(
\frac{\partial u}{\partial y_{i}}\right)  ,
\]
as seen above in (\ref{0.3}).

We assume in the sequel that the algebra $A$ is ergodic, that is, any
$u\in\mathcal{B}_{A}^{p}(\mathbb{R}^{d})$ that is invariant under
$(\mathcal{T}(y))_{y\in\mathbb{R}^{d}}$ is a constant in $\mathcal{B}_{A}%
^{p}(\mathbb{R}^{d})$, i.e., if $\mathcal{T}(y)u=u$ for every $y\in
\mathbb{R}^{d}$, then $\left\Vert u-c\right\Vert _{p}=0$, $c$ a constant. As
in \cite{BMW} we observe that if the algebra with mean value $A$ is ergodic,
then $u\in\mathcal{B}_{A}^{1}(\mathbb{R}^{d})$ is invariant if and only if
$\overline{\partial}u/\partial y_{i}=0$ for all $1\leq i\leq d$.\ We denote by
$I_{A}^{p}(\mathbb{R}^{d})$ the space of invariant functions in $\mathcal{B}%
_{A}^{p}(\mathbb{R}^{d})$. Let us also recall the following property
\cite{Deterhom,NA2014}.

\begin{itemize}
\item[(\textbf{iii)}] The mean value $M$ viewed as defined on $A$, extends by
continuity to a positive continuous linear form (still denoted by $M$) on
$B_{A}^{p}(\mathbb{R}^{d})$. For each $u\in B_{A}^{p}(\mathbb{R}^{d})$ and all
$a\in\mathbb{R}^{d}$, we have $M(u(\cdot+a))=M(u)$, and $\left\Vert
u\right\Vert _{p}=\left[  M(\left\vert u\right\vert ^{p})\right]  ^{1/p}$.
\end{itemize}

To the space $B_{A}^{p}(\mathbb{R}^{d})$ we also attach the following
\textit{corrector} space
\[
B_{\#A}^{1,p}(\mathbb{R}^{d})=\{u\in W_{loc}^{1,p}(\mathbb{R}^{d}):\nabla u\in
B_{A}^{p}(\mathbb{R}^{d})^{d}\text{ and }M(\nabla u)=0\}\text{.}%
\]
We also define the space $\nabla B_{\#A}^{1,p}(\mathbb{R}^{d})=\{\nabla u:u\in
B_{\#A}^{1,p}(\mathbb{R}^{d})\}$. Identifying an element of $\nabla
B_{\#A}^{1,p}(\mathbb{R}^{d})$ with its class in $(\mathcal{B}_{A}%
^{p}(\mathbb{R}^{d}))^{d}$, $\nabla B_{\#A}^{1,p}(\mathbb{R}^{d})$ will be
considered as a subspace of $(\mathcal{B}_{A}^{p}(\mathbb{R}^{d}))^{d}$.
Moreover we identify two elements of $B_{\#A}^{1,p}(\mathbb{R}^{d})$ by their
gradients: $u=v$ in $B_{\#A}^{1,p}(\mathbb{R}^{d})$ iff $\nabla(u-v)=0$, i.e.
$\left\Vert \nabla(u-v)\right\Vert _{p}=0$. We may therefore equip
$B_{\#A}^{1,p}(\mathbb{R}^{d})$ with the gradient norm $\left\Vert
u\right\Vert _{\#,p}=\left\Vert \nabla u\right\Vert _{p}$. This defines a
Banach space \cite[Theorem 3.12]{Casado} (actually $\nabla B_{\#A}%
^{1,p}(\mathbb{R}^{d})$ is closed in $(B_{A}^{p}(\mathbb{R}^{d}))^{d}$, so
that $B_{\#A}^{1,p}(\mathbb{R}^{d})$ is a Banach space) containing
$B_{A}^{1,p}(\mathbb{R}^{d})$ as a subspace.

For $u\in\mathcal{B}_{A}^{p}(\mathbb{R}^{d})$ (resp. $\boldsymbol{v}%
=(v_{1},...,v_{d})\in(\mathcal{B}_{A}^{p}(\mathbb{R}^{d}))^{d}$), we define
the gradient operator $\overline{\nabla}_{y}$ and the divergence operator
$\overline{\nabla}_{y}\cdot$ by
\[
\overline{\nabla}_{y}u:=\left(  \frac{\overline{\partial}u}{\partial y_{1}%
},...,\frac{\overline{\partial}u}{\partial y_{d}}\right)  \text{
and\ }\overline{\nabla}_{y}\cdot\boldsymbol{v}\equiv\overline{\Div}%
_{y}\boldsymbol{v}:=\sum_{i=1}^{d}\frac{\overline{\partial}v_{i}}{\partial
y_{i}}.
\]
Then\ the divergence operator sends continuously and linearly $(\mathcal{B}%
_{A}^{p^{\prime}}(\mathbb{R}^{d}))^{d}$ into $(\mathcal{B}_{A}^{1,p}%
(\mathbb{R}^{d}))^{\prime}$ and satisfies
\begin{equation}
\left\langle \overline{\Div}_{y}\boldsymbol{u},v\right\rangle =-\left\langle
\boldsymbol{u},\overline{\nabla}_{y}v\right\rangle \text{\ for }%
v\in\mathcal{B}_{A}^{1,p}(\mathbb{R}^{d})\text{ and }\boldsymbol{u}=(u_{i}%
)\in(\mathcal{B}_{A}^{p^{\prime}}(\mathbb{R}^{d}))^{d}\text{,} \label{00}%
\end{equation}
where $\left\langle \boldsymbol{u},\overline{\nabla}_{y}v\right\rangle
:=M(\boldsymbol{u}\cdot\overline{\nabla}_{y}v)$.

This being so, our next aim is to define the sigma-convergence concept. To
this end, let $A_{y}$ (resp. $A_{\tau}$) be an algebra wmv on $\mathbb{R}^{d}$
(resp. $\mathbb{R}$) and let $A=A_{\tau}\odot A_{y}$ be their product which is
an algebra wmv on $\mathbb{R}\times\mathbb{R}^{d}$. We will denote by the same
letter $M$, the mean value on $\mathbb{R}^{d}$ and on $\mathbb{R}^{d+1}$ as
well. Finally $E$, $Q$, $T$ and $Q_{T}$ are as in the previous sections.

\begin{definition}
\label{d2.4}\emph{1) A sequence }$(u_{\varepsilon})_{\varepsilon>0}\subset
L^{p}(Q_{T})$\emph{\ (}$1\leq p<\infty$\emph{) is said to }weakly $\Sigma
$-converge\emph{\ in }$L^{p}(Q_{T})$\emph{\ to some }$u_{0}\in L^{p}%
(Q_{T};\mathcal{B}_{A}^{p}(\mathbb{R}^{d+1}))$\emph{\ if as }$\varepsilon
\rightarrow0$\emph{, we have }
\begin{equation}
\int_{Q_{T}}u_{\varepsilon}(t,x)v\left(  t,x,\frac{t}{\varepsilon},\frac
{x}{\varepsilon}\right)  dxdt\rightarrow\int_{Q_{T}}M\left(  u_{0}%
(t,x,\cdot)v(t,x,\cdot)\right)  dxdt \label{4.20}%
\end{equation}
\emph{for every }$v\in L^{p^{\prime}}(Q_{T};A)$\emph{\ (}$1/p^{\prime}%
=1-1/p$\emph{), where for a.e. }$(t,x)\in Q_{T}$\emph{, }$v(t,x,\cdot
)(\tau,y)=v(t,x,\tau,y)$\emph{\ for }$(\tau,y)\in\mathbb{R}\times
\mathbb{R}^{d}$\emph{. We express this by }$u_{\varepsilon}\rightarrow u_{0}%
$\emph{\ }in $L^{p}(Q_{T})$-weak $\Sigma$.

\noindent\emph{2) A sequence }$(u_{\varepsilon})_{\varepsilon>0}$ \emph{in}
$L^{p}(Q_{T})$\emph{ is said to }strongly $\Sigma$-converge in $L^{p}(Q_{T}%
)$\emph{ to some }$u_{0}\in L^{p}(Q_{T};\mathcal{B}_{A}^{p}(\mathbb{R}%
^{d+1}))\emph{\ }$\emph{if it is weakly sigma-convergent and further
}$\left\Vert u_{\varepsilon}\right\Vert _{L^{p}(Q_{T})}\rightarrow\left\Vert
u_{0}\right\Vert _{L^{p}(Q_{T};\mathcal{B}_{A}^{p}(\mathbb{R}^{d+1}))}$\emph{.
We denote this by }$u_{\varepsilon}\rightarrow u_{0}$\emph{ in }$L^{p}(Q_{T}%
)$-strong $\Sigma$\emph{.}
\end{definition}

In the above definition, if $A=\mathcal{C}_{\text{per}}(\mathcal{T}%
\times\mathcal{Y})$\ then one is led at once to the convergence result
\[
\int_{Q_{T}}u_{\varepsilon}(t,x)v\left(  t,x,\frac{t}{\varepsilon},\frac
{x}{\varepsilon}\right)  dxdt\rightarrow\int_{Q_{T}}\int_{\mathcal{T}}%
\int_{\mathcal{Y}}u_{0}(t,x,\tau,y)v(t,x,\tau,y)d\tau dydxdt
\]
where $u_{0}\in L^{p}(Q_{T};L_{per}^{p}(\mathcal{T}\times\mathcal{Y}))$, which
is the definition of the two-scale convergence see Definition \ref{d2.4}.

The following results are of utmost importance in the forthcoming
homogenization process. We refer the reader to \cite{CMP,DPDE} for their proofs.

\begin{theorem}
\label{t3.2}Let $(u_{\varepsilon})_{\varepsilon\in E}$ be a bounded sequence
in $L^{2}(Q_{T})$. Then there exist a subsequence $E^{\prime}$ from $E$ and a
function $u$ in $L^{2}(Q_{T};\mathcal{B}_{A}^{2}(\mathbb{R}^{d+1}))$ such that
the sequence $(u_{\varepsilon})_{\varepsilon\in E^{\prime}}$ weakly $\Sigma
$-converges in $L^{2}(Q_{T})$ to $u$.
\end{theorem}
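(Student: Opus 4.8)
The plan is to follow the classical two-scale compactness argument of Nguetseng--Allaire, transposed to the algebra-with-mean-value framework, and to realize the weak $\Sigma$-limit through a Riesz representation in the Hilbert space $\mathcal{H}=L^{2}(Q_{T};\mathcal{B}_{A}^{2}(\mathbb{R}^{d+1}))$. Throughout, for $v\in L^{2}(Q_{T};A)$ I write $v^{\varepsilon}(t,x)=v(t,x,t/\varepsilon,x/\varepsilon)$, and I exploit that $A=A_{\tau}\odot A_{y}$ is an algebra with mean value on $\mathbb{R}^{d+1}$, so that products of its elements remain in $A$ and every element possesses a mean value $M$.

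The first and decisive step is a mean-value convergence lemma: for every $v\in L^{2}(Q_{T};A)$ one has $\left\Vert v^{\varepsilon}\right\Vert _{L^{2}(Q_{T})}\to\left\Vert v\right\Vert _{\mathcal{H}}=\left(\int_{Q_{T}}M(|v(t,x,\cdot)|^{2})\,dxdt\right)^{1/2}$. To prove it I would first reduce, by density, to finite decoupled sums $v=\sum_{i}\varphi_{i}(t,x)g_{i}$ with $\varphi_{i}\in\mathcal{C}(\overline{Q}_{T})$ and $g_{i}\in A$; for such $v$, the function $|v|^{2}$ is again of decoupled type with fast factors lying in $A$ (here the algebra structure is essential), and the defining property $g^{\varepsilon}\overset{\ast}{\rightharpoonup}M(g)$ in $L^{\infty}$ for $g\in A$ yields $\int_{Q_{T}}|v^{\varepsilon}|^{2}\to\int_{Q_{T}}M(|v(t,x,\cdot)|^{2})$. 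A crude uniform bound $\left\Vert v^{\varepsilon}\right\Vert _{L^{2}(Q_{T})}\le|Q_{T}|^{1/2}\left\Vert v\right\Vert _{\mathcal{C}(\overline{Q}_{T};A)}$, obtained by pointwise evaluation, then lets me propagate the convergence from the dense subclass to all of $L^{2}(Q_{T};A)$.

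With this in hand I introduce the linear functionals $L_{\varepsilon}(v)=\int_{Q_{T}}u_{\varepsilon}v^{\varepsilon}\,dxdt$ on $L^{2}(Q_{T};A)$. Cauchy--Schwarz together with the hypothesis $\left\Vert u_{\varepsilon}\right\Vert _{L^{2}(Q_{T})}\le C$ gives $|L_{\varepsilon}(v)|\le C\left\Vert v^{\varepsilon}\right\Vert _{L^{2}(Q_{T})}$, which is uniformly controlled by the previous step. Since the algebra $A$ is separable, $L^{2}(Q_{T};A)$ is separable, and a diagonal extraction produces a subsequence $E^{\prime}\subset E$ along which $L_{\varepsilon}(v)$ converges for every $v$ in a fixed countable dense set; the uniform bound propagates this convergence to all $v\in L^{2}(Q_{T};A)$, and I denote the limit by $L(v)$.

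Finally, passing to the limit in $|L_{\varepsilon}(v)|\le C\left\Vert v^{\varepsilon}\right\Vert _{L^{2}(Q_{T})}$ and invoking the mean-value lemma gives $|L(v)|\le C\left\Vert v\right\Vert _{\mathcal{H}}$. As $L^{2}(Q_{T};A)$ is dense in the Hilbert space $\mathcal{H}$, the functional $L$ extends to a bounded linear functional on $\mathcal{H}$, so by the Riesz representation theorem there is a unique $u\in\mathcal{H}=L^{2}(Q_{T};\mathcal{B}_{A}^{2}(\mathbb{R}^{d+1}))$ with $L(v)=\int_{Q_{T}}M(u(t,x,\cdot)v(t,x,\cdot))\,dxdt$ for all $v$; this is exactly the defining relation (\ref{4.20}) of weak $\Sigma$-convergence, whence $u_{\varepsilon}\to u$ in $L^{2}(Q_{T})$-weak $\Sigma$ along $E^{\prime}$. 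The main obstacle is the mean-value lemma of the first step: the simultaneous oscillation in the fast variables $t/\varepsilon,x/\varepsilon$ and its coupling with the macroscopic variable $(t,x)$ must be handled through the product-algebra structure and a careful density reduction; once the $L^{2}$-norm convergence is secured, the remaining functional-analytic steps are routine.
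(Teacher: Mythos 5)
The paper does not actually prove Theorem \ref{t3.2}; it defers to \cite{CMP,DPDE}. Your argument is, in substance, exactly the proof given in that literature: establish $\Vert v^{\varepsilon}\Vert_{L^{2}(Q_{T})}\rightarrow\Vert v\Vert_{L^{2}(Q_{T};\mathcal{B}_{A}^{2}(\mathbb{R}^{d+1}))}$ for test functions by reducing to decoupled tensors and invoking the defining weak-$\ast$ convergence $g^{\varepsilon}\rightharpoonup M(g)$ for $g\in A$ (this is indeed where the product-algebra structure matters, since $|v|^{2}$ must again have its fast part in $A$, and the joint oscillation in $(t/\varepsilon,x/\varepsilon)$ is handled by the mean value on $\mathbb{R}^{d+1}$); then regard $v\mapsto\int_{Q_{T}}u_{\varepsilon}v^{\varepsilon}dxdt$ as uniformly bounded linear functionals, extract a subsequence by a density/diagonal argument, pass to the limit in the bound to get $|L(v)|\leq C\Vert v\Vert_{L^{2}(Q_{T};\mathcal{B}_{A}^{2})}$, and represent $L$ by Riesz in the Hilbert space $L^{2}(Q_{T};\mathcal{B}_{A}^{2}(\mathbb{R}^{d+1}))$. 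All of these steps are sound, and the resulting identity is precisely the defining relation of weak $\Sigma$-convergence.

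The one assertion that does not follow from the paper's definitions is ``since the algebra $A$ is separable, $L^{2}(Q_{T};A)$ is separable.'' An algebra with mean value as defined here --- a closed, translation-invariant subalgebra of $\mathrm{BUC}(\mathbb{R}^{d+1})$ containing the constants and whose elements admit mean values --- need not be separable; the algebra of all Bohr almost periodic functions is the standard counterexample. Without separability of the test space the diagonal extraction of a convergent \emph{subsequence} breaks down (Banach--Alaoglu only yields a convergent subnet), and this is the only place your argument can fail. This is not a defect peculiar to your write-up: the sources the paper cites either build separability into the definition of the homogenization algebra or restrict to the separable closed subalgebra generated by the (countably many) functions actually occurring in the problem, and one of these two fixes should be stated explicitly. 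Apart from this point your proof is complete and matches the referenced one.
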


\begin{theorem}
\label{t3.3}Let $(u_{\varepsilon})_{\varepsilon\in E}$ be a bounded sequence
in $L^{2}(0,T;H^{1}(Q))$. There exist a subsequence $E^{\prime}$ from $E$ and
a couple $\mathbf{u}=(u_{0},u_{1})$ with $u_{0}\in L^{2}(0,T;\mathcal{B}%
_{A_{\tau}}^{2}(\mathbb{\mathbb{R}}_{\tau};H^{1}(Q;I_{A_{y}}^{2}%
(\mathbb{R}_{y}^{d}))))$ and $u_{1}\in L^{2}(Q_{T};\mathcal{B}_{A_{\tau}}%
^{2}(\mathbb{\mathbb{R}}_{\tau};B_{\#A_{y}}^{1,2}(\mathbb{R}_{y}^{d})))$ such
that, as $E^{\prime}\ni\varepsilon\rightarrow0$,
\[
u_{\varepsilon}\rightarrow u_{0}\text{\ in }L^{2}(Q_{T})\text{-weak }%
\Sigma\text{\ \ \ \ \ \ \ \ \ \ }%
\]
and
\[
\frac{\partial u_{\varepsilon}}{\partial x_{j}}\rightarrow\frac{\partial
u_{0}}{\partial x_{j}}+\frac{\partial u_{1}}{\partial y_{j}}\text{\ in }%
L^{2}(Q_{T})\text{-weak }\Sigma\;(1\leq j\leq d)\text{.}%
\]
If moreover the sequence $(\partial u_{\varepsilon}/\partial t)_{\varepsilon
>0}$ is bounded in $L^{2}(0,T;(H^{1}(Q))^{\ast})$ then $u_{0}\in
L^{2}(0,T;H^{1}(Q;I_{A_{y}}^{2}(\mathbb{R}_{y}^{d})))$.
\end{theorem}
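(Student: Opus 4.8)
The plan is to follow the blueprint of the periodic result, Theorem \ref{t2}, replacing two-scale convergence by $\Sigma$-convergence and periodicity by the algebra-with-mean-value calculus. First I would extract limits by compactness: boundedness of $(u_\varepsilon)$ in $L^2(0,T;H^1(Q))$ makes $(u_\varepsilon)$ and each $(\partial u_\varepsilon/\partial x_j)$ bounded in $L^2(Q_T)$, so Theorem \ref{t3.2} together with a diagonal extraction yields a subsequence $E'$ and functions $u_0,v_1,\dots,v_d\in L^2(Q_T;\mathcal{B}_A^2(\mathbb{R}^{d+1}))$ with $u_\varepsilon\to u_0$ and $\partial u_\varepsilon/\partial x_j\to v_j$ weakly $\Sigma$. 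Everything then reduces to identifying $u_0$ and the $v_j$.

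The second step is an integration-by-parts identity. For $\Phi=(\Phi_j)\in(\mathcal{C}_0^\infty(Q_T)\otimes A^\infty)^d$ I would test $(\partial u_\varepsilon/\partial x_j)$ against $\Phi_j^\varepsilon(t,x)=\Phi_j(t,x,t/\varepsilon,x/\varepsilon)$ and use $\partial_{x_j}\Phi_j^\varepsilon=(\partial_{x_j}\Phi_j)^\varepsilon+\varepsilon^{-1}(\partial_{y_j}\Phi_j)^\varepsilon$. Choosing $\Phi$ with $\Div_y\Phi=0$ annihilates the singular $\varepsilon^{-1}$ term, and the weak-$\Sigma$ convergences let me pass to the limit to obtain
\[
\sum_{j}\int_{Q_T}M(v_j\Phi_j)\,dxdt=-\int_{Q_T}M(u_0\,\Div_x\Phi)\,dxdt
\]
for every such $\Phi$. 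Specialising first to $\Phi$ independent of the fast variables yields the macroscopic relation $M(v_j)=\partial M(u_0)/\partial x_j$, and, with the help of the product structure $A=A_\tau\odot A_y$ and the properties of the mean value, provides the preliminary regularity of $u_0$ as a Bochner function in $(t,\tau)$ with values in $H^1(Q;\mathcal{B}_{A_y}^2(\mathbb{R}_y^d))$.

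The decisive and hardest step is the construction of the corrector and the invariance of the $y$-dependence of $u_0$. The identity above says precisely that the field $(v_j-\partial u_0/\partial x_j)_j$ is orthogonal, in $(\mathcal{B}_{A_y}^2(\mathbb{R}_y^d))^d$, to every $y$-solenoidal field; invoking the Weyl--de Rham type decomposition of $(\mathcal{B}_{A_y}^2(\mathbb{R}_y^d))^d$ into divergence-free fields and the closed potential subspace $\overline{\nabla}_y B_{\#A_y}^{1,2}(\mathbb{R}_y^d)$, I conclude the existence of $u_1\in L^2(Q_T;\mathcal{B}_{A_\tau}^2(\mathbb{R}_\tau;B_{\#A_y}^{1,2}(\mathbb{R}_y^d)))$ with $v_j=\partial u_0/\partial x_j+\overline{\partial}u_1/\partial y_j$. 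Testing the same identity against the invariant (constant-in-$y$, divergence-free) directions forces the $y$-dependence of $u_0$ into the invariant subspace, upgrading its regularity to $u_0\in L^2(0,T;\mathcal{B}_{A_\tau}^2(\mathbb{R}_\tau;H^1(Q;I_{A_y}^2(\mathbb{R}_y^d))))$. This orthogonal-decomposition argument, together with the bookkeeping forced by the product algebra, is where the genuine theory of algebras with mean value (as opposed to mere periodicity) enters, and I expect it to be the \emph{main obstacle}.

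Finally, for the refinement under the extra bound, if $(\partial u_\varepsilon/\partial t)$ is bounded in $L^2(0,T;(H^1(Q))^\ast)$ I would argue exactly as in Theorem \ref{t2}: testing against functions oscillating only in fast time, of the form $\theta(t/\varepsilon)\psi(t,x)$ with $\theta\in A_\tau^\infty$ of zero mean, the time-derivative bound prevents any genuine fast-time oscillation from surviving in the limit, which forces $u_0$ to be independent of $\tau$. Hence $u_0\in L^2(0,T;H^1(Q;I_{A_y}^2(\mathbb{R}_y^d)))$, and the proof is complete.
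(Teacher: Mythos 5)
The paper itself does not prove Theorem \ref{t3.3}: it refers to \cite{CMP,DPDE}, where the argument is the standard sigma-convergence compactness proof. Your overall architecture --- diagonal extraction via Theorem \ref{t3.2}, integration by parts against $y$-solenoidal oscillating test functions, a Weyl/de Rham orthogonal decomposition of $(\mathcal{B}_{A_y}^{2}(\mathbb{R}_y^d))^d$ to produce the corrector $u_1$, and fast-time test functions for the last assertion --- is exactly the architecture of that proof. There is, however, a genuine gap in the middle step.

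You never actually establish that $u_0$ takes its values in $I_{A_y}^{2}(\mathbb{R}_y^d)$, nor that $u_0$ itself (rather than its mean) is $H^1(Q)$-valued. Your identity $\sum_j\int_{Q_T}M(v_j\Phi_j)\,dxdt=-\int_{Q_T}M(u_0\operatorname{div}_x\Phi)\,dxdt$, specialised to $\Phi$ independent of the fast variables, only yields $\partial M(u_0)/\partial x_j=M(v_j)$, i.e.\ regularity of the \emph{mean} $M(u_0)$; and testing against constant-in-$y$ (hence trivially $y$-solenoidal) fields likewise only sees $M_y(u_0)$, so it cannot ``force the $y$-dependence of $u_0$ into the invariant subspace'': the component $u_0-M_y(u_0)$ is invisible to every such test function. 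The missing ingredient is the $\varepsilon$-scaled test: for arbitrary $\phi\in\mathcal{C}_0^\infty(Q_T)\otimes A^\infty$ one writes
\[
\varepsilon\int_{Q_T}\frac{\partial u_\varepsilon}{\partial x_j}\,\phi^\varepsilon\,dxdt=-\int_{Q_T}u_\varepsilon\left(\varepsilon(\partial_{x_j}\phi)^\varepsilon+(\partial_{y_j}\phi)^\varepsilon\right)dxdt,
\]
lets $\varepsilon\to0$ using the $L^2(Q_T)$ bound on $\nabla u_\varepsilon$, and obtains $\int_{Q_T}M(u_0\,\partial_{y_j}\phi)\,dxdt=0$ for all $j$, i.e.\ $\overline{\partial}u_0/\partial y_j=0$, which by ergodicity of $A_y$ places $u_0(t,x,\tau,\cdot)$ in $I_{A_y}^{2}(\mathbb{R}_y^d)$. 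Only after this step does $\partial M(u_0)/\partial x_j=M(v_j)$ translate into $H^1(Q)$-regularity of $u_0$ itself, and only then can your identity be rewritten as orthogonality of $v-\nabla_x u_0$ to the solenoidal fields so that the closedness of $\nabla B_{\#A_y}^{1,2}(\mathbb{R}_y^d)$ delivers $u_1$. The final part of your argument (fast-time oscillations killed by the bound on $\partial_t u_\varepsilon$) is correct, with the minor caveat that one should test with $\theta=d\Theta/d\tau$ for $\Theta\in A_\tau^\infty$ rather than with a merely mean-zero $\theta$, since in a general algebra with mean value a mean-zero function need not admit a primitive in the algebra.
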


\begin{remark}
\label{r4.1}\emph{Assume that the algebra }$A_{y}$\emph{ is ergodic. Then
}$I_{A_{y}}^{2}(\mathbb{R}_{y}^{d})$\emph{ reduces to constants functions, so
that in Theorem \ref{t3.3} the function }$u_{0}$\emph{ lies either in }%
$L^{2}(0,T;\mathcal{B}_{A_{\tau}}^{2}(\mathbb{\mathbb{R}}_{\tau};H^{1}%
(Q)))$\emph{ or in }$L^{2}(0,T;H^{1}(Q))$\emph{ (if the algebra }$A_{y}$\emph{
is ergodic).}
\end{remark}

\subsection{Homogenization results: passage to the limit}

We assume that the algebras $A_{y}$ and $A_{\tau}$ are ergodic. The notations
are those of the preceding sections, we remark that property (\ref{4.20}) in
Definition \ref{d2.4} still holds true for $f\in\mathcal{C}(Q_{T}%
;B_{A}^{p^{\prime},\infty}(\mathbb{R}_{y,\tau}^{N+1}))$ where $B_{A}%
^{p^{\prime},\infty}(\mathbb{R}_{y,\tau}^{N+1})=B_{A}^{p^{\prime},\infty
}(\mathbb{R}_{y,\tau}^{N+1})\cap L^{\infty}(\mathbb{R}_{y,\tau}^{N+1})$ and as
usual $p^{\prime}=p/(p-1)$.

With this in mind, the use of the sigma-convergence method to solve the
homogenization problem for (\ref{1})-(\ref{4}) will be possible provided that
the following assumption on the coefficient $A_{0}$ of (\ref{1}) holds true.
\begin{equation}
A_{0}(t,x,\cdot,\cdot)\in\lbrack B_{A}^{2}(\mathbb{R}_{y,\tau}^{2+1}%
)]^{2\times2}\text{ for a.e. }(t,x)\in Q_{T} \label{4.18}%
\end{equation}
where $A=A_{\tau}\odot A_{y}$. Let $(\boldsymbol{u}_{\varepsilon}%
,\phi_{\varepsilon},\mu_{\varepsilon},p_{\varepsilon})_{\varepsilon>0}$ be the
sequence determined in Section 2. Proceeding as in Section 3, and using the
compactness results Theorems \ref{t3.2}-\ref{t3.3} and Remark \ref{r4.1}, we
derive the existence of functions $\boldsymbol{u}_{0}\in\mathcal{V}$,
$\phi_{0}\in\mathcal{W}$ (see (\ref{3.0}) for the definitions of $\mathcal{V}$
and $\mathcal{W}$), $\widetilde{\mu}_{0}\in L^{2}(0,T;\mathcal{B}_{A_{\tau}%
}^{2}(\mathbb{\mathbb{R}}_{\tau};H^{1}(Q)))$, $\boldsymbol{u}_{1}\in
L^{2}(Q_{T};\mathcal{B}_{A_{\tau}}^{2}(\mathbb{\mathbb{R}}_{\tau};B_{\#A_{y}%
}^{1,2}(\mathbb{R}_{y}^{2})^{2}))$, $\phi_{1},\mu_{1}\in L^{2}(Q_{T}%
;\mathcal{B}_{A_{\tau}}^{2}(\mathbb{\mathbb{R}}_{\tau};B_{\#A_{y}}%
^{1,2}(\mathbb{R}_{y}^{2})))$, $p,\xi\in L^{2}(Q_{T};\mathcal{B}_{A}%
^{2}(\mathbb{R}^{2+1}))$, and a subsequence $E^{\prime}$ of $E$ such that, as
$E^{\prime}\ni\varepsilon\rightarrow0$,
\begin{equation}
\boldsymbol{u}_{\varepsilon}\rightarrow\boldsymbol{u}_{0}\text{ in
}\mathcal{V}%
\text{-weak\ \ \ \ \ \ \ \ \ \ \ \ \ \ \ \ \ \ \ \ \ \ \ \ \ \ \ \ \ \ \ }
\label{4.2}%
\end{equation}%
\begin{equation}
\boldsymbol{u}_{\varepsilon}\rightarrow\boldsymbol{u}_{0}\text{ in }%
L^{2}(0,T;\mathbb{H})\text{-strong\ \ \ \ \ \ \ \ \ \ \ \ \ \ } \label{4.3}%
\end{equation}%
\begin{equation}
\phi_{\varepsilon}\rightarrow\phi_{0}\text{ in }\mathcal{W}%
\text{-weak\ \ \ \ \ \ \ \ \ \ \ \ \ \ \ \ \ \ \ \ \ \ \ \ \ \ \ } \label{4.4}%
\end{equation}%
\begin{equation}
\phi_{\varepsilon}\rightarrow\phi_{0}\text{ in }L^{2}(Q_{T}%
)\text{-strong\ \ \ \ \ \ \ \ \ \ \ \ \ \ \ \ \ \ \ \ \ \ \ \ } \label{4.5}%
\end{equation}%
\begin{equation}
\frac{\partial\boldsymbol{u}_{\varepsilon}}{\partial x_{i}}\rightarrow
\frac{\partial\boldsymbol{u}_{0}}{\partial x_{i}}+\frac{\partial
\boldsymbol{u}_{1}}{\partial y_{i}}\text{ in }L^{2}(Q_{T})^{2}\text{-weak
}\Sigma\label{4.6}%
\end{equation}%
\begin{equation}
\frac{\partial\phi_{\varepsilon}}{\partial x_{i}}\rightarrow\frac{\partial
\phi_{0}}{\partial x_{i}}+\frac{\partial\phi_{1}}{\partial y_{i}}\text{ in
}L^{2}(Q_{T})\text{-weak }\Sigma\label{4.7}%
\end{equation}%
\begin{equation}
\frac{\partial\mu_{\varepsilon}}{\partial x_{i}}\rightarrow\frac
{\partial\widetilde{\mu}_{0}}{\partial x_{i}}+\frac{\partial\mu_{1}}{\partial
y_{i}}\text{ in }L^{2}(Q_{T})\text{-weak }\Sigma\label{4.8}%
\end{equation}%
\begin{equation}
p_{\varepsilon}\rightarrow p\text{ in }L^{2}(Q_{T})\text{-weak }%
\Sigma\ \ \ \ \ \ \ \ \ \ \ \ \ \ \ \ \ \ \ \label{4.9}%
\end{equation}%
\begin{equation}
f(\phi_{\varepsilon})\rightarrow\xi\text{ in }L^{2}(Q_{T})\text{-weak }%
\Sigma.\ \ \ \ \ \ \ \ \ \ \ \ \ \ \label{4.10}%
\end{equation}
Since $\operatorname{div}\boldsymbol{u}_{\varepsilon}=0$, it holds that
$\operatorname{div}_{y}\boldsymbol{u}_{1}=0$. As in the preceding section, we
set $\Phi=(\phi_{0},\phi_{1})$, $\boldsymbol{u}=(u_{0},u_{1})$,
$\boldsymbol{\mu}=(\mu_{0},\mu_{1})$, and we use the same definition for
$\mathbb{D}\boldsymbol{u}$: $\mathbb{D}\boldsymbol{u}=\nabla\boldsymbol{u}%
_{0}+\nabla_{y}\boldsymbol{u}_{1}$. We also set $\mu_{0}=M_{\tau}%
(\widetilde{\mu}_{0})$ and observe that
\begin{equation}
M(\xi)=f(\phi_{0}%
)\ \ \ \ \ \ \ \ \ \ \ \ \ \ \ \ \ \ \ \ \ \ \ \ \ \ \ \ \ \ \ \ \ \ \ \ \label{4.11}%
\end{equation}
as shown for the periodic case.

We proceed as in the proof of Proposition \ref{p3.1} to get that
$\boldsymbol{u}=(\boldsymbol{u}_{0},\boldsymbol{u}_{1})$, $\Phi=(\phi_{0}%
,\phi_{1})$, $\boldsymbol{\mu}=(\widetilde{\mu}_{0},\mu_{1})$, $\xi$ and $p$
determined above by (\ref{4.2})-(\ref{4.10}), solve the system (\ref{4.12}%
)-(\ref{4.14}) below%
\begin{equation}
\left\{
\begin{array}
[c]{l}%
{\displaystyle-\int_{Q_{T}}\boldsymbol{u}_{0}\frac{\partial\Psi_{0}}{\partial
t}dxdt+}%
%TCIMACRO{\dint _{Q_{T}}}%
%BeginExpansion
{\displaystyle\int_{Q_{T}}}
%EndExpansion
M(A_{0}\mathbb{D}\boldsymbol{u}\cdot\mathbb{D}\Psi)dxdt\\
\ \ +{\displaystyle\int_{Q_{T}}(\boldsymbol{u}_{0}\cdot\nabla)\boldsymbol{u}%
_{0}\Psi_{0}dxdt-}%
%TCIMACRO{\dint _{Q_{T}}}%
%BeginExpansion
{\displaystyle\int_{Q_{T}}}
%EndExpansion
M(p(\operatorname{div}\Psi_{0}+\operatorname{div}_{y}\Psi_{1}))dxdt\\
\ \ \ \ {\displaystyle-\kappa\int_{Q_{T}}\mu_{0}\nabla\phi_{0}\cdot\Psi
_{0}dxdt=\int_{Q_{T}}g\Psi_{0}dxdt};
\end{array}
\right.  \label{4.12}%
\end{equation}%
\begin{equation}
-\int_{Q_{T}}\phi_{0}\frac{\partial\varphi_{0}}{\partial t}dxdt+\int_{Q_{T}%
}\phi_{0}\boldsymbol{u}_{0}\cdot\nabla\varphi_{0}dxdt+%
%TCIMACRO{\dint _{Q_{T}}}%
%BeginExpansion
{\displaystyle\int_{Q_{T}}}
%EndExpansion
M(\mathbb{D}\boldsymbol{\mu}\cdot\mathbb{D}\boldsymbol{\varphi})dxdt=0;
\label{4.13}%
\end{equation}%
\begin{equation}
\int_{Q_{T}}M_{\tau}(\widetilde{\mu}_{0})\chi_{0}dxdt=\lambda%
%TCIMACRO{\dint _{Q_{T}}}%
%BeginExpansion
{\displaystyle\int_{Q_{T}}}
%EndExpansion
M(\mathbb{D}\Phi\cdot\mathbb{D}\boldsymbol{\chi})dxdt+\alpha%
%TCIMACRO{\dint _{Q_{T}}}%
%BeginExpansion
{\displaystyle\int_{Q_{T}}}
%EndExpansion
M(\xi\chi_{0})dxdt \label{4.14}%
\end{equation}
for all $\Psi=(\Psi_{0},\Psi_{1})\in\mathcal{C}_{0}^{\infty}(Q_{T})^{2}%
\times(\mathcal{E})^{2}$, $\boldsymbol{\varphi}=(\varphi_{0},\varphi_{1}%
)\in\mathcal{C}_{0}^{\infty}(Q_{T})\times\mathcal{E}$ and $\boldsymbol{\chi
}=(\chi_{0},\chi_{1})\in(\mathcal{C}_{0}^{\infty}(Q_{T})\otimes A_{\tau
}^{\infty})\times\mathcal{E}$ where $\mathcal{E}=\mathcal{C}_{0}^{\infty
}(Q_{T})\otimes(A_{\tau}^{\infty}\otimes A_{y}^{\infty})$.

Concerning the homogenized problem, we set $p_{0}=M(p)$ and and we proceed as
in Subsection 3.3 to uncouple (\ref{4.12}), (\ref{4.13}) and (\ref{4.14}).
Starting with (\ref{4.14}), we choose there $\chi_{1}(t,x,\tau,y)=\chi_{0}%
^{0}(t,x)\psi(y)\theta(\tau)$ with $\chi_{0}^{0}\in\mathcal{C}_{0}^{\infty
}(Q_{T})$, $\psi\in A_{y}^{\infty}$ and $\theta\in A_{\tau}^{\infty}$, and we
obtain
\[
M((\nabla\phi_{0}+\nabla_{y}\phi_{1})\cdot\nabla_{y}\psi)=0\ \ \ \forall
\psi\in A_{y}^{\infty}.
\]
For $r\in\mathbb{R}^{2}$, we consider the equation
\[
M((r+\nabla_{y}\pi(r))\cdot\nabla_{y}\psi)=0\ \ \ \forall\psi\in A_{y}%
^{\infty},
\]
which is the variational form of $-\operatorname{div}_{y}(r+\nabla_{y}%
\pi(r))=0$ in $\mathbb{R}^{2}$, i.e.
\begin{equation}
-\Delta_{y}\pi(r)=0\text{ in }\mathbb{R}^{2}\text{, \ }\pi(r)\in B_{\#A_{y}%
}^{1,2}(\mathbb{R}^{2}). \label{4.15}%
\end{equation}
Appealing to \cite[Theorem 1.2]{JTW}, (\ref{4.15}) possesses constant
solutions $\pi(r)$ since $\nabla_{y}\pi(r)=0$. This yields at once
\[
\left\{
\begin{array}
[c]{l}%
{\displaystyle\int_{Q_{T}}\mu_{0}\chi_{0}dxdt}=\lambda%
%TCIMACRO{\dint _{Q_{T}}}%
%BeginExpansion
{\displaystyle\int_{Q_{T}}}
%EndExpansion
\nabla\phi_{0}\cdot\nabla\chi_{0})dxdt+\alpha%
%TCIMACRO{\dint _{Q_{T}}}%
%BeginExpansion
{\displaystyle\int_{Q_{T}}}
%EndExpansion
f(\phi_{0})\chi_{0}dxdt\\
\text{for all }\chi_{0}\in\mathcal{C}_{0}^{\infty}(Q_{T})\text{, where }%
\mu_{0}=M_{\tau}(\widetilde{\mu}_{0})\text{ and }M(\xi)=f(\phi_{0}),
\end{array}
\right.
\]
which is the variational form of
\begin{equation}
\mu_{0}=-\lambda\Delta\phi_{0}+\alpha f(\phi_{0})\text{ in }Q_{T}.
\label{4.14b}%
\end{equation}
We also uncouple (\ref{4.13}) and proceed as above in (\ref{4.14}) to obtain
\begin{equation}
\frac{\partial\phi_{0}}{\partial t}+\boldsymbol{u}_{0}\cdot\nabla\phi
_{0}-\Delta\mu_{0}=0\text{ in }Q_{T}. \label{4.13b}%
\end{equation}
Finally, considering (\ref{4.12}), we fix $r\in\mathbb{R}^{2\times2}$ and
consider the equation
\begin{equation}
\left\{
\begin{array}
[c]{l}%
\text{Find }\eta(r)\in B_{\#A_{y}}^{1,2}(\mathbb{R}^{2})^{2}\text{ and }%
\pi(r)\in B_{A_{y}}^{2}(\mathbb{R}^{2})\\
-\text{$\Div$}_{y}(A_{0}(t,x,\tau,\cdot)(r+\nabla_{y}\boldsymbol{\eta
}(r)))+\nabla_{y}\pi(r)=0\text{ in }\mathbb{R}^{2}\\
\text{$\Div$}_{y}\eta(r)=0\text{ in }\mathbb{R}^{2}.
\end{array}
\right.  \label{4.16}%
\end{equation}
Then using standard method, we prove that there exists a unique
$\boldsymbol{\eta}(r)\in B_{\operatorname{div}}^{1,2}(\mathbb{R}%
^{2})=\{\boldsymbol{u}\in B_{\#A_{y}}^{1,2}(\mathbb{R}^{2})^{2}:\Div$%
$_{y}\boldsymbol{u}=0\}$ solution of (\ref{4.16}) in the following sense
\[
M((A_{0}(t,x,\tau,\cdot)(r+\nabla_{y}\boldsymbol{\eta}(r))\cdot\nabla
\psi)=0\text{ for all }\psi\in B_{\operatorname{div}}^{1,2}(\mathbb{R}^{2}).
\]
Moreover, using \cite[Theorem 2.1]{BCJW}, we infer the existence of a function
$\pi(r)\in B_{A_{y}}^{2}(\mathbb{R}^{2})$ with $\pi(r)+\mathcal{N}$ unique
modulo $I_{A_{y}}^{2}(\mathbb{R}^{2})$ (where $\mathcal{N}=\{u\in B_{A_{y}%
}^{2}(\mathbb{R}^{2}):\left\Vert u\right\Vert _{2}=0\}$), such that
\[
-\text{$\Div$}_{y}(A_{0}(t,x,\tau,\cdot)(r+\nabla_{y}\boldsymbol{\eta
}(r)))+\nabla_{y}\pi(r)=0\text{ in }\mathbb{R}^{2}.
\]
Choosing in (\ref{4.16}) $r=\nabla\boldsymbol{u}_{0}(t,x)$ and the uniqueness
of the solution to (\ref{4.16}) leads to $\boldsymbol{u}_{1}=\boldsymbol{\eta
}(\nabla\boldsymbol{u}_{0})$ and $p=\pi(\nabla\boldsymbol{u}_{0})$ where
$\boldsymbol{\eta}(\nabla\boldsymbol{u}_{0})$ stands for the function
$(t,x,\tau)\mapsto\boldsymbol{\eta}_{t,x,\tau}(\nabla\boldsymbol{u}_{0}%
(t,x))$, which belongs to $L^{2}(Q_{T};B_{A_{\tau}}^{2}(\mathbb{R}_{\tau
};B_{\operatorname{div}}^{1,2}(\mathbb{R}^{2})))$. Clearly, if
$\boldsymbol{\eta}_{j}^{\ell}$ is the solution of (\ref{4.16}) corresponding
to $r=r_{j}^{\ell}=(\delta_{ij}\delta_{k\ell})_{1\leq i,k\leq2}$ (that is all
the entries of $r$ are zero except the entry occupying the $j$th row and the
$\ell$th column which is equal to $1$), then
\begin{equation}
\boldsymbol{u}_{1}=\sum_{j,\ell=1}^{2}\frac{\partial u_{0}^{\ell}}{\partial
x_{j}}\boldsymbol{\eta}_{j}^{\ell}\text{ where }\boldsymbol{u}_{0}%
=(u_{0}^{\ell})_{1\leq\ell\leq2}. \label{4.12a}%
\end{equation}
We recall again that $\boldsymbol{\eta}_{j}^{\ell}$ depends on $t,x,\tau$ as
it is the case for $A_{0}$. In the variational form of (\ref{4.12}), we insert
the value of $\boldsymbol{u}_{1}$ obtained in (\ref{4.12a}) to get the
equation
\begin{equation}
\frac{\partial\boldsymbol{u}_{0}}{\partial t}-\operatorname{div}(\widehat
{A}_{0}\nabla\boldsymbol{u}_{0})+(\boldsymbol{u}_{0}\cdot\nabla)\boldsymbol{u}%
_{0}+\nabla p_{0}-\kappa\mu_{0}\nabla\phi_{0}=g\text{ in }Q_{T} \label{4.12b}%
\end{equation}
where $\widehat{A}_{0}(t,x)=(\widehat{a}_{ij}^{k\ell}(t,x))_{1\leq
i,j,k,\ell\leq2}$, $\widehat{a}_{ij}^{k\ell}(t,x)=a_{\hom}(\boldsymbol{\eta
}_{j}^{\ell}+P_{j}^{\ell},\boldsymbol{\eta}_{i}^{k}+P_{i}^{k})$ with
$P_{j}^{\ell}=y_{j}e^{\ell}$ ($e^{\ell}$ the $\ell$th vector of the canonical
basis of $\mathbb{R}^{2}$) and
\[
a_{\hom}(\boldsymbol{u},\boldsymbol{v})=\sum_{i,j,k=1}^{2}M\left(  a_{ij}%
\frac{\partial u^{k}}{\partial y_{j}}\frac{\partial v^{k}}{\partial y_{i}%
}\right)  \text{ where }A_{0}=(a_{ij})_{1\leq i,j\leq2}.
\]

\subsection{Homogenized problem}

Putting together the equations (\ref{4.14b}), (\ref{4.13b}), (\ref{4.12b})
together with boundary and initial conditions, we obtain the following system
\begin{equation}
\left\{
\begin{array}
[c]{l}%
{\displaystyle\frac{\partial\boldsymbol{u}_{0}}{\partial t}}%
-\operatorname{div}(\widehat{A}_{0}\nabla\boldsymbol{u}_{0})+(\boldsymbol{u}%
_{0}\cdot\nabla)\boldsymbol{u}_{0}+\nabla p_{0}-\kappa\mu_{0}\nabla\phi
_{0}=g\text{ in }Q_{T}\\
\operatorname{div}\boldsymbol{u}_{0}=0\text{ in }Q_{T}\\
{\displaystyle\frac{\partial\phi_{0}}{\partial t}}+\boldsymbol{u}_{0}%
\cdot\nabla\phi_{0}-\Delta\mu_{0}=0\text{ in }Q_{T}\\
\mu_{0}=-\lambda\Delta\phi_{0}+\alpha f(\phi_{0})\text{ in }Q_{T}\\
(\boldsymbol{u}_{0},\phi_{0})(0)=(\boldsymbol{u}_{0}^{\ast},\phi_{0}^{\ast
})\text{ in }Q\\
{\displaystyle\frac{\partial\phi_{0}}{\partial\nu}=\frac{\partial\mu_{0}%
}{\partial\nu}}\text{ on }(0,T)\times\partial Q\\
\boldsymbol{u}_{0}=0\text{ on }(0,T)\times\partial Q.
\end{array}
\right.  \label{4.17}%
\end{equation}

The following is the main homogenized result of this section. Its proof
follows the same lines as the one of Theorem \ref{t3.4}.

\begin{theorem}
\label{t4.1}Assume that \emph{(\textbf{A1})-(\textbf{A3})} and
\emph{(\ref{4.18})} hold. For each $\varepsilon>0$ let $(\boldsymbol{u}%
_{\varepsilon},\phi_{\varepsilon},\mu_{\varepsilon},p_{\varepsilon})$ be the
unique solution of \emph{(\ref{1})-(\ref{4})}. Then the sequence
$(\boldsymbol{u}_{\varepsilon},\phi_{\varepsilon},\mu_{\varepsilon
},p_{\varepsilon})_{\varepsilon>0}$ converges strongly in $L^{2}(Q_{T}%
)^{2}\times L^{2}(Q_{T})$ $($with respect to the first two components
$(\boldsymbol{u}_{\varepsilon},\phi_{\varepsilon}))$ and weakly in
$L^{2}(Q_{T})\times L^{2}(0,T;H^{1}(Q))$ $($with respect to $(\mu_{\varepsilon
},p_{\varepsilon}))$ to the solution of problem \emph{(\ref{4.17})}.
\end{theorem}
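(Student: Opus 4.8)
The plan is to follow, step for step, the argument behind Theorem \ref{t3.4}, replacing two-scale convergence by sigma-convergence throughout. The crucial observation is that the a priori estimates of Lemma \ref{l1} and Lemma \ref{le2} never used the periodicity of $A_0$, so under (\textbf{A1})--(\textbf{A3}) the sequence $(\boldsymbol{u}_\varepsilon,\phi_\varepsilon,\mu_\varepsilon,p_\varepsilon)_{\varepsilon>0}$ remains bounded, uniformly in $\varepsilon$, in $\mathcal{V}$, $\mathcal{W}$, $L^2(0,T;H^1(Q))$ and $L^2(Q_T)$ respectively. First I would apply the compactness results Theorem \ref{t3.2} and Theorem \ref{t3.3}, together with Remark \ref{r4.1} (the ergodicity of $A_y$ collapsing the $y$-dependence of the macroscopic limits), and extract by a diagonal process a subsequence $E'$ along which the convergences (\ref{4.2})--(\ref{4.10}) hold, producing the limit data $\boldsymbol{u}_0,\phi_0,\widetilde{\mu}_0,\boldsymbol{u}_1,\phi_1,\mu_1,p,\xi$. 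The divergence constraint $\operatorname{div}\boldsymbol{u}_\varepsilon=0$ passes to the microscopic scale as $\operatorname{div}_y\boldsymbol{u}_1=0$, exactly as in Section 3.

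Next I would pass to the limit in the weak form (\ref{3.13})--(\ref{3.15}) using oscillating test functions $(\Psi_\varepsilon,\boldsymbol{\varphi}_\varepsilon,\boldsymbol{\chi}_\varepsilon)$ built from smooth data in $\mathcal{E}$, in complete analogy with the proof of Proposition \ref{p3.1}. The linear terms converge by the very definition of weak $\Sigma$-convergence; the advection terms $(\boldsymbol{u}_0\cdot\nabla)\boldsymbol{u}_0$ and $\phi_0\boldsymbol{u}_0\cdot\nabla\varphi_0$ are handled by combining the gradient identities with the \emph{strong} convergences (\ref{4.3}) and (\ref{4.5}), which come from the compact embeddings $\mathcal{V}\hookrightarrow L^2(0,T;\mathbb{H})$ and $\mathcal{W}\hookrightarrow L^2(0,T;L^2(Q))$. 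The continuity of $f$ together with the a.e.\ convergence of $\phi_\varepsilon$ and the uniform bound (\ref{3.12'}) yield $f(\phi_\varepsilon)\rightharpoonup f(\phi_0)$ weakly in $L^2(Q_T)$, whence the identification $M(\xi)=f(\phi_0)$ recorded in (\ref{4.11}). This produces the global limit system (\ref{4.12})--(\ref{4.14}).

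Then I would uncouple (\ref{4.12})--(\ref{4.14}). Selecting test functions supported on the microscopic variables in (\ref{4.14}) and (\ref{4.13}) isolates the cell equations for $\phi_1$ and $\mu_1$; via (\ref{4.15}), where \cite[Theorem 1.2]{JTW} forces constant solutions, one gets $\phi_1=\mu_1=0$, so these equations collapse to the macroscopic Cahn--Hilliard pair (\ref{4.14b})--(\ref{4.13b}). For (\ref{4.12}) I would solve the Stokes-type corrector problem (\ref{4.16}), invoking \cite[Theorem 2.1]{BCJW} for the associated pressure $\pi(r)$, set $\boldsymbol{u}_1=\boldsymbol{\eta}(\nabla\boldsymbol{u}_0)$ by uniqueness, expand it through the basis correctors $\boldsymbol{\eta}_j^\ell$ as in (\ref{4.12a}), and substitute to obtain the effective operator with coefficients $\widehat{a}_{ij}^{k\ell}=a_{\hom}(\boldsymbol{\eta}_j^\ell+P_j^\ell,\boldsymbol{\eta}_i^k+P_i^k)$ and the macroscopic equation (\ref{4.12b}). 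Assembling (\ref{4.12b}), (\ref{4.13b}), (\ref{4.14b}) with the boundary and initial conditions gives (\ref{4.17}). Finally, using the energy functional attached to (\ref{4.16}), I would check that $\widehat{A}_0$ is symmetric and uniformly elliptic with the same constant $\gamma$, so that (\ref{4.17}) has a unique solution $(\boldsymbol{u}_0,\phi_0,\mu_0,p_0)$; uniqueness then upgrades the subsequential convergence to convergence of the whole sequence, with the strong and weak modes as stated.

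The main obstacle is the passage to the limit in the capillarity coupling $\kappa\int_{Q_T}\mu_\varepsilon\nabla\phi_\varepsilon\cdot\Psi_\varepsilon\,dxdt$: since only $\nabla\mu_\varepsilon$, and not $\partial_t\mu_\varepsilon$, is controlled, one cannot pass to the limit directly. As in Proposition \ref{p3.1}, I would integrate by parts to rewrite this term as $-\int_{Q_T}(\mu_\varepsilon\phi_\varepsilon\operatorname{div}\Psi_\varepsilon+\phi_\varepsilon\Psi_\varepsilon\cdot\nabla\mu_\varepsilon)\,dxdt$, then exploit that $\widetilde{\mu}_0$ is independent of $y$ (a consequence of the boundedness of $(\mu_\varepsilon)$ in $L^2(0,T;H^1(Q))$, via Theorem \ref{t3.3}) together with the product rule for sigma-convergence (the counterpart of Theorem \ref{t3}) applied to the pair $(\mu_\varepsilon,\phi_\varepsilon)$, one factor converging weakly and the other strongly. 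The vanishing of the mean value in $y$ of $\operatorname{div}_y\Psi_1$ is precisely what removes the microscopic corrector and restores $\mu_0\nabla\phi_0$ in the limit, recovering $-\kappa\int_{Q_T}\mu_0\nabla\phi_0\cdot\Psi_0\,dxdt$.
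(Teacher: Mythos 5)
Your proposal follows essentially the same route as the paper: the paper's proof of Theorem \ref{t4.1} is exactly the chain you describe — uniform estimates independent of periodicity, compactness via Theorems \ref{t3.2}--\ref{t3.3} and Remark \ref{r4.1} yielding (\ref{4.2})--(\ref{4.10}), passage to the limit as in Proposition \ref{p3.1} (including the integration by parts in the capillarity term), uncoupling of (\ref{4.12})--(\ref{4.14}) via the corrector problems (\ref{4.15}) and (\ref{4.16}) with \cite{JTW} and \cite{BCJW}, and uniqueness of the solution to (\ref{4.17}) to pass from subsequences to the whole sequence. No discrepancies to report.
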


{\bf Acknowledgement.} G.C. is a member of GNAMPA (INDAM).


\begin{thebibliography}{99}                                                                                               %


\bibitem {18}Abels, H.: Diffuse interface Models for Two-phase Flows of
Viscous Incompressible Fluids. PhD thesis, Leipzig University (2007)

\bibitem {6'}Anderson, D.M., Mc Fadden, G.B., Wheeler, A.A.: Diffuse interface
methods in fluid mechanics. Ann. Rev. Fluid Mech., 30, 139--169 (1998)

\bibitem {SE}Banas, L., Mahato, H.S.: Homogenization of evolutionary
Stokes-Cahn-Hilliard equations for two-phase porous media flow. Asymptotic
Anal., 105, 77--95 (2017)

\bibitem {8'}Bang, B., Lukkassen, D.: Application of homogenization theory
related to Stokes flow in porous media. Appl. Math., 44, 309--319 (1999)

\bibitem {sole2}Bazylak, A., Berejnov, V., Markicevic, B., Sinton, D.,
Djalali, N.: Numerical and microfluidic arterial geometry pore networks:
Towers designs for directed water transport in GDLs. Electrochimical Acta, 53,
7630--7637 (2008)

\bibitem {10'}Belhadj, M., Canc\`{e}s, E., Gerbeau, J.F., Mikeli\'{c}, A.:
Homogenization approach to filtration through a fibrous medium. Netw. Heterog.
Media, 2, 529--550 (2007)

\bibitem {BMW}Bourgeat, A., Mikeli\'{c}, A., Wright, S.: Stochastic two-scale
convergence in the mean and applications. J. Reine Angew. Math., 456, 19--51 (1994)

\bibitem {17}Boyer, F.: Mathematical study of multi-phase flow under shear
through order parameter formulation. Asymptot. Anal., 20, 175--212 (1999)

\bibitem {BCJW}Bunoiu, R., Cardone, G., J\"{a}ger, W., Woukeng, J.L.:
Deterministic homogenization of elliptic equations with lower order terms.
arXiv: 1911.10350 (2019)

\bibitem {pan}Cardone, G., Fares, R., Panasenko, G.P., Asymptotic expansion of
the solution of the steady Stokes equation with variable viscosity in a
two-dimensional tube structure, J. Math. Physics, 53, 103702 (2012)

\bibitem {Casado}Casado Diaz, J., Gayte, I.: The two-scale convergence method
applied to generalized Besicovitch spaces. Proc. R. Soc. Lond. A, 458,
2925--2946 (2002)

\bibitem {Chipot1}Chipot, M.: Asymptotic Issues for Some Partial Differential
Equations. Imperial College Press, London (2016)

\bibitem {Chipot2}Chipot, M., Rougirel, A.: On the asymptotic behaviour of the
solution of parabolic problems in cylindrical domains of large size in some
directions. Discrete Contin. Dyn. Syst. Ser. B 1(3), 319--338 (2001)

\bibitem {Chipot3}Chipot, M., Roy, P., Shafrir, I.: Asymptotics of eigenstates
of elliptic problems with mixed boundary data on domains tending to infinity.
Asympt. Anal. 85, 199--227 (2013)

\bibitem {Chipot4}Chipot, M., Guesmia, S.: Correctors for some asymptotic
problems. Proc. Steklov Inst. Math. 270, 263--277 (2010)

\bibitem {Chipot6}Chipot, M., Mardare, S.: Asymptotic behaviour of the Stokes
problem in cylinders becoming unbounded in one direction, J. Math. Pures Appliqu\'{e}es
90 (2), 133-159 (2008)


\bibitem {Chipot5}Chipot, M., Mardare, S.: On correctors for the Stokes
problem in cylinders. In: Proceedings of the Conference on Nonlinear Phenomena
with Energy Dissipation, Chiba, November 2007, Gakuto International Series,
Mathematical Sciences and Applications, Vol. 29, Gakkotosho, pp. 37--52 (2008)

\bibitem {pc}Colli, P., Frigeri, S., Grasselli M.: Global existence of weak
solutions to nonlocal Cahn-Hilliard-Navier-Stokes system. J. Math. Anal.
Appl., 386, 428--444 (2012)

\bibitem {15}Douanla, H., Woukeng, J.L.: Almost periodic Homogenization a
generalized Ladyzhenskaya model for incompressible Viscous flows. J. Math.
Sci., 189, 431--458 (2013)

\bibitem {16}Feng, X.: Fully discrete finite element approximations of the
Navier-Stokes-Cahn-Hilliard diffuse interface model for two-phase fluid
flows. SIAM J. Numer. Anal., 44, 1049--1072 (2006)

\bibitem {b2}Gal, C.G., Grasselli, M.: Asymptotic behavior of a
Cahn-Hilliard-Navier-Stokes system in 2D. Ann. Inst. H. Poincar\'{e} Anal. Non
Lin\'{e}aire, 27, 401--436 (2010)

\bibitem {25'}Hornung, U.: Homogenization and porous media. Interdisciplinary
Applied Mathematics, 6, Springer-Verlag, New York (1997)

\bibitem {26'}Jacqmin, D.: Calculation of two-phase Navier-Stokes flows using
phase-field modeling. J. comput. Phys., 155, 96--127 (1999)

\bibitem {JTW}J\"{a}ger, W., Tambue, A., Woukeng, J.L.: Approximation of
homogenized coefficients in deterministic homogenization and convergence rates
in the asymptotic almost periodic setting. arXiv: 1906.11501 (2019)

\bibitem {28'}Kim, J.: Phase-field models for multi-component fluid flows.
Comm. Comput. Phys., 12, 613--661 (2012)

\bibitem {sol}Kozicki, W., Kuang,  P.Q.: Cake filtration of suspensions in
viscoelastic fluids. Canad. J. Chem. Eng., 72, 828--839 (1994)

\bibitem {pol}Londergan, T., Meinardus,  H.W., Manner, P.E., Jackson,  R.E.,
Brown, C.L., Dwarakanath, V., Pope, G.A., Ginn, J.S., Taffinder, S.: DNAPL
removal from a heterogeneous alluvial aquifer by surfactant-enhanced aquifer
remediation. Groundwater Monitoring \& Remediation, 21, 71--81 (2001)

\bibitem {sole}Skartsis, L., Khomani, B., Kardos, J.L.: Polymeric flow through
fibro media. J. Rheology, 36, 589--620 (1992)

\bibitem {Hom1}Nguetseng, G.: Homogenization structures and applications I.
Z.\ Anal. Anwen., 22, 73--107 (2003)

\bibitem {NG2006}Nguetseng, G.: Deterministic homogenization. Multi scale
problems and asymptotic analysis, 233--248, GAKUTO Internat. Ser. Math. Sci.
Appl., 24, Gakk\={o}tosho, Tokyo (2006)

\bibitem {CMP}Nguetseng, G., Sango, M., Woukeng, J.L.: Reiterated ergodic
algebras and applications. Commun. Math. Phys., 300, 835--876 (2010)

\bibitem {24'}Nguetseng, G., Signing, L.: Sigma-convergence of stationary
Navier-Stokes type equations. Electron. J. Differ. Eq., 1--18 (2009)

\bibitem {160}Novotny, A., Straskraba, I.: Introduction to the Mathematical
theory of compressible flow. Oxford Univ. Press, (2004)

\bibitem{sta} Panasenko,  G., Stavre, R.: Asymptotic analysis of the Stokes flow 
with variable viscosity in a thin elastic channel. Networks and Heterogeneous Media,
5(4), 783--812 (2010)

\bibitem {sole1}Preziosi, L., Joseph, D.D., Beavers, G.S.: Infiltration of
initially dry, deformable porous media. Inter. J. Mult. flow, 22, 1205--1222 (1996)

\bibitem {36}Robinson, J.C.: Infinite-dimensional Dynamical systems. An
Introduction to dissipative parabolic PDEs and the Theory of Global
Attractors, Cambridge Texts in Applied Mathematics, Cambridge University
Press, Cambridge (2001)

\bibitem {NA}Sango, M., Svanstedt, N., Woukeng, J.L.: Generalized Besicovitch
spaces and application to deterministic homogenization. Nonlin. Anal. TMA, 74,
351--379 (2011)

\bibitem {DPDE}Sango, M., Woukeng, J.L.: Stochastic sigma convergence and
applications, Dynamics of PDE, 8, 261--310 (2011)

\bibitem {sch}Schmuck, M., Pradas, M., Pavliotis, G. A., Kalliadasis, S.:
Derivation of effective macroscopic Stokes-Cahn-Hilliard equations for
periodic immiscible flows in porous media. Nonlinearity, 26, 3259--3277 (2013)

\bibitem {Rah}Simon, J.: On the existence of the pressure for solutions of the
variational Navier-Stokes equations. J. Math. Fluid Mech., 1, 225--234 (1999)

\bibitem {Deterhom}Woukeng, J.L.: Homogenization in algebras with mean value.
Banach J. Math. Anal., 9, 142--182 (2015)

\bibitem {NA2014}Woukeng, J.L.: Introverted algebras with mean value and
applications. Nonlinear Anal., 99, 190--215 (2014)

\bibitem {Zhikov4}Zhikov, V.V., Krivenko, E.V.: Homogenization of singularly
perturbed elliptic operators. Matem. Zametki, 33, 571--582 (1983) (english
transl.: Math. Notes, 33, 294--300 (1983))
\end{thebibliography}
\end{document}